\documentclass[12pt]{article}
\usepackage{amssymb}
\usepackage{amsthm}
\usepackage{amsmath}
\usepackage[dvips]{epsfig}
\usepackage{graphicx}
\usepackage{color}
\usepackage{url}
\usepackage{tikz}
\usepackage{caption}
\usepackage{subcaption}
\usepackage{epstopdf}

\makeatletter\@addtoreset {equation}{section}\makeatother

\setlength{\topmargin}{-0.7in}
\setlength{\textheight}{9in}
\setlength{\oddsidemargin}{-0.1in}
\setlength{\evensidemargin}{0.in}
\setlength{\textwidth}{6.75in}

\usetikzlibrary{matrix,arrows}
\newtheorem{thm}{Theorem}[section]

\newtheorem{prop}[thm]{Proposition}
\newtheorem{lem}[thm]{Lemma}

\newtheorem{remark}{Remark}

\newenvironment{proof1}%
{\begin{trivlist} \item[]{\em Proof }}%
{\hspace*{\fill}$\rule{.3\baselineskip}{.35\baselineskip}$\end{trivlist}}

\DeclareMathOperator{\sech}{sech}

\DeclareMathOperator{\Imag}{Im}
\DeclareMathOperator{\Real}{Re}

\begin{document}

\title{\bf Transverse instability of line solitons \\ in massive Dirac equations}

\author{Dmitry Pelinovsky and Yusuke Shimabukuro \\
{\small \it Department of Mathematics and Statistics, McMaster
University, Hamilton, Ontario, Canada, L8S 4K1 } }

\date{\today}
\maketitle

\begin{abstract}
Working in the context of localized modes in
periodic potentials, we consider two systems of the massive Dirac equations
in two spatial dimensions. The first system,  a generalized massive Thirring model,
is derived for the periodic stripe potentials. The second one, a generalized massive Gross--Neveu
equation, is derived for the hexagonal potentials. In both cases, we prove analytically
that the line solitons suffer from instability
with respect to periodic transverse perturbations of large periods.
The instability is induced by the spatial translation for the massive Thirring model
and by the gauge rotation for the massive Gross--Neveu model. We also observe numerically
that the instability holds for the transverse perturbations
of any period in the massive Thirring model and exhibits a finite threshold on
the period of the transverse perturbations in the massive Gross--Neveu model.
\end{abstract}

\section{Introduction}

Starting with pioneer contributions of V.E. Zakharov and his school \cite{ZR},
studies of transverse instabilities of line solitons in various nonlinear evolution equations
have been developed in many different contexts. With the exception of the Kadometsev--Petviashvili-II (KP-II)
equation, line solitons in many evolution equations suffer from instabilities
with respect to transverse periodic perturbations  \cite{KivPel}.

More recently, it was proved for the prototypical model of the KP-I equation  that the
line solitons under the transverse perturbations
of sufficiently small periods remain orbitally stable  \cite{RT12}. Similar thresholds on the period of transverse instability
exist in other models such as the elliptic nonlinear Schr\"{o}dinger (NLS) equation \cite{KK}
and the Zakharov--Kuznetsov (ZK) equation \cite{PRT}. Nevertheless, this conclusion is not universal
and the line solitons can be unstable for all periods of the transverse perturbations,
as it happens for the hyperbolic NLS equation \cite{PEOB}.

Conclusions on the transverse stability or instability of line solitons may change in the presence of
the periodic potentials. In the two-dimensional problems with square periodic potentials,
it was found numerically in \cite{Kev1,Kev2,Yang1} that transverse instability of line solitons is eliminated if the line
soliton bifurcates from the so-called $X$ point of the dispersion relation, where the homogeneous limit
is given by the hyperbolic NLS equation. Line solitons remain transversely unstable if they bifurcate
from the $\Gamma$ point of the dispersion relation,
where the homogeneous limit is given by the elliptic NLS equation.
These numerical results were rigorously justified in \cite{PY}
from the analysis of the two-dimensional discrete NLS equation, which
models the tight-binding limit of the periodic potentials \cite{PS}.

For the one-dimensional periodic (stripe) potentials, similar stabilization
of the transverse instability was observed numerically
in \cite{Yang2}. However, it was proven within the tight-binding limit in \cite{PY}
that transverse instabilities of line solitons persist for any parameter configurations
of the discrete NLS equation with continuous transverse dispersion. One of the motivations
for our present work is to study transverse stability of solitary waves in periodic stripe potentials
away from the tight--binding limit.

In particular, we employ the massive Dirac equations also known as the coupled-mode equations, which
have been derived and justified in the reduction of the Gross--Pitaevskii equation with small
periodic potentials \cite{US}. Similar models were also introduced in the context of the periodic stripe potentials
in \cite{AD}, where the primary focus was on the existence and stability
of fully localized two-dimensional solitary waves. From the class of massive Dirac models,
we will be particularly interested in the version of the massive Thirring model \cite{Thirring},
for which orbital stability of one-dimensional solitons was proved in our previous
work with the help of conserved quantities \cite{PS1} and auto--B\"{a}cklund transformation \cite{PS2}.
In the present work,  we prove analytically that the line solitons of the massive Thirring model
are prone to transverse instabilities with respect to the periodic perturbations of large periods
induced by the spatial translation. We also show numerically that the instability
persists for smaller periods of transverse perturbations.

Different versions of the massive Dirac equations were derived recently in the context of hexagonal potentials.
The corresponding systems generalize the massive Gross--Neveu model (also known as the Soler model in $(1+1)$ dimensions)
\cite{gross-neveu}. These equations were derived
formally in \cite{Abl1,Abl2} and were justified recently with rigorous analysis \cite{FW1,FW2,FW3}. Extending
the scope of our work, we prove analytically that the transverse instability of line solitons also holds for the
hexagonal potentials with respect to the periodic perturbations of large periods induced by the gauge rotation.
Numerical results indicate that the instability exhibits a finite threshold on
the period of the transverse perturbations.

The method we employ in our work is relatively old \cite{ZR} (see review in \cite{KivPel}),
although it has not been applied
to the class of massive Dirac equations even at the formal level.
We develop analysis at the rigorous level of arguments. Our work relies on the
resolvent estimates for the spectral stability problem in $(1+1)$ dimensions, where
the zero eigenvalue is disjoint from the continuous spectrum, whereas the eigenfunctions for
the zero eigenvalue are known from the translational and gauge symmetries of the massive Dirac equations.
When the transverse wave number is nonzero but small, the multiple zero eigenvalue split and
one can rigorously justify if this splitting induces the transverse instability or not.
It becomes notoriously more difficult to prove persistence of instabilities for large transverse
wave numbers (small periods), hence, we have to retreat to numerical computations for such studies of the
corresponding transverse stability problem.

The approach we undertake in this paper is complementary to the computations based on the Evans function approach \cite{Jon1,Jon2}.
Although both approaches stand on rigorous theory based on the implicit function theorem, we believe that
the perturbative computations are shorter and provide the binary answer on the transverse stability or instability of the
line soliton in a simple and concise way.

The structure of this paper is as follows. Section 2 introduces two systems of the massive Dirac equations
and their line solitons in the context of stripe and hexagonal potentials. Section 3 presents the analytical
results and gives details of algorithmic computations of the perturbation theory for the massive Thirring
and Gross--Neveu models. Section 4 contains numerical
approximations of eigenvalues of the transverse stability problem. Transverse instability of small-amplitude
line solitons in more general models is discussed in Section 5.

\section{Massive Dirac equations}

The class of massive Dirac equations in the space of one spatial dimension
can be written in the following general form \cite{CP,Pel-survey},
\begin{equation}
\label{dirac}
\left\{ \begin{array}{ll} i (u_t + u_x) + v  = \partial_{\bar{u}} W(u,v,\bar{u},\bar{v}), \\
i (v_t - v_x) + u = \partial_{\bar{v}} W(u,v,\bar{u},\bar{v}), \end{array} \right.
\end{equation}
where the subscripts denote partial differentiation, $(u,v)$ are complex-valued amplitudes
in spatial $x$ and temporal $t$ variables, and $W$ is the real function of $(u,v,\bar{u},\bar{v})$,
which is symmetric with respect to $u$ and $v$ and satisfies the gauge invariance
$$
W(e^{i \alpha} u,e^{i \alpha} v,e^{-i \alpha} \bar{u},e^{-i \alpha} \bar{v}) =
W(u,v,\bar{u},\bar{v}) \quad \mbox{\rm for every} \;\; \alpha \in \mathbb{R}.
$$
As it is shown in \cite{CP}, under the constraints on $W$, it can be expressed in terms of variables
$(|u|^2 + |v|^2)$, $|u|^2 |v|^2$, and $(\bar{u} v + u \bar{u})$.
For the cubic Dirac equations, $W$ is a homogeneous quartic polynomial in $u$ and $v$, which
can be expressed in the most general form as
$$
W = c_1 (|u|^2 + |v|^2)^2 + c_2 |u|^2 |v|^2 + c_3 (|u|^2 + |v|^2) (\bar{u} v + u \bar{u})
+ c_4 (\bar{u} v + u \bar{u})^2,
$$
where $c_1$, $c_2$, $c_3$, and $c_4$ are real coefficients.
In this case, a family of stationary solitary waves
of the massive Dirac equations can be found in the explicit form \cite{CP} (see also
\cite{Saxena1}).

Among various nonlinear Dirac equations, the following particular cases
have profound significance in relativity theory:
\begin{itemize}
\item $W = |u|^2 |v|^2$ - the massive Thirring model \cite{Thirring};
\item $W = \frac{1}{2} (\bar{u} v + u \bar{v})^2$ - the massive Gross--Neveu model \cite{gross-neveu}.
\end{itemize}
Global well-posedness of the massive Thirring model was proved
both in $H^s(\mathbb{R})$ for $s > \frac{1}{2}$ \cite{Selberg} and in $L^2(\mathbb{R})$ \cite{Candy}.
Recently, global well-posedness of the massive Gross--Neveu equations
was proved
both in $H^s(\mathbb{R})$ for $s > \frac{1}{2}$ \cite{Huh} and in $L^2(\mathbb{R})$ \cite{Zhang}.

When the massive Dirac equations are used in modeling of the
Gross--Pitaevskii equation with small periodic potentials,
the realistic nonlinear terms are typically different from the two
particular cases of the massive Thirring and Gross--Neveu models. (In this context, the nonlinear Dirac equations
are also known as the coupled-mode equations.) While computations
with the realistic nonlinear terms are important for applications,
they bring complexity that results in lack of clarity. In what follows,
we prefer to work with the two particular cases above. Nevertheless,
in the following two subsections,
we describe the connection of the massive Thirring and Gross--Neveu models  to physics of
nonlinear states of the Gross--Pitaevskii equation trapped in periodic potentials.

\subsection{Periodic stripe potentials}

In the context of one-dimensional periodic (stripe) potentials,
the massive Dirac equations (\ref{dirac}) can be derived in the following form \cite{AD},
\begin{equation}
\label{dirac1}
\left\{ \begin{array}{ll} i (u_t + u_x) + v + u_{yy} = (\alpha_1 |u|^2 + \alpha_2 |v|^2) u, \\
i (v_t - v_x) + u + v_{yy} = (\alpha_2 |u|^2 + \alpha_1 |v|^2) v, \end{array} \right.
\end{equation}
where $y$ is a new coordinate in the transverse direction to the stripe potential,
the complex-valued amplitudes $(u,v)$ correspond to two counter-propagating resonant Fourier modes
interacting with the small periodic potential,
and $(\alpha_1,\alpha_2)$ are real-valued parameters. For the stripe potentials,
the parameters satisfy the constraint $\alpha_2 = 2 \alpha_1$.
The massive Thirring model corresponds to the case $\alpha_1 = 0$ and $\alpha_2 \neq 0$ \cite{Thirring}.

To illustrate the derivation of the massive Dirac equations (\ref{dirac1}), we can
consider a two-dimensional Gross--Pitaevskii equation with a small periodic potential
\begin{equation}
\label{GPeq}
i \psi_t = -\psi_{xx} - \psi_{yy} + 2 \epsilon \cos(x) \psi + |\psi|^2 \psi,
\end{equation}
and apply the Fourier decomposition
\begin{equation}
\label{expansion-asympt}
\psi(x,y,t) = \sqrt{\epsilon} \left[ u(\epsilon x, \sqrt{\epsilon} y, \epsilon t) e^{\frac{i}{2} x - \frac{i}{4} t}
+ v(\epsilon x, \sqrt{\epsilon} y, \epsilon t) e^{-\frac{i}{2} x - \frac{i}{4} t} + \epsilon R(x,y,t) \right],
\end{equation}
where $\epsilon$ is a small parameter and $R$ is the remainder term. From the condition that $R$ is bounded
in variables $(x,y,t)$, it can be obtained from (\ref{GPeq}) and (\ref{expansion-asympt}) that
$(u,v)$ satisfy the nonlinear Dirac equations (\ref{dirac1})
with $\alpha_1 = 1$ and $\alpha_2 = 2$. Justification of the Fourier decomposition (\ref{expansion-asympt})
and the nonlinear Dirac equations (\ref{dirac1}) in the context of the Gross--Pitaevskii equation
(\ref{GPeq}) has been reported in \cite{US} (see also Chapter 2.2 in the book \cite{P}).

The stationary $y$-independent solitary waves of the massive Dirac equations (\ref{dirac1})
are referred to as the line solitons.
According to the analysis in \cite{CP,Saxena1}, the corresponding solutions
can be represented in the form
\begin{equation}
\label{line-soliton}
u(x,t) = U_{\omega}(x)e^{i \omega t}, \quad v(x,t) = \bar{U}_{\omega}(x) e^{i\omega t},
\end{equation}
where $\omega \in (-1,1)$ is taken in the gap between two branches of the
linear wave spectrum of the massive Dirac equations (\ref{dirac1}).
The complex-valued amplitude $U_{\omega}$ satisfies
the first-order differential equation
\begin{equation}
\label{equation-1}
i U_{\omega}' - \omega U_{\omega} + \overline{U}_{\omega} = (\alpha_1 + \alpha_2) |U_{\omega}|^2 U_{\omega}.
\end{equation}
In what follows, we simplify our presentation
and consider the particular configuration $\alpha_1 = 0$ and $\alpha_2 = 1$, which correspond to
the massive Thirring model. In this case, the solitary wave solution
exists for every $\omega \in (-1,1)$ in the explicit form
\begin{equation}
\label{MTM-soliton}
U_{\omega}(x) =\sqrt{2} \mu \frac{\sqrt{1+\omega}\cosh(\mu x)-i\sqrt{1-\omega}\sinh(\mu x)}{\omega + \cosh(2 \mu x)},
\end{equation}
where $\mu = \sqrt{1-\omega^2}$. As $\omega \to 1$, the family of solitary waves (\ref{MTM-soliton})
approaches the NLS profile $U_{\omega \to 1}(x) \to \mu {\rm sech}(\mu x)$.
As $\omega \to -1$, it degenerates into the algebraic profile
$$
U_{\omega = -1}(x) = \frac{2(1-2ix)}{1 + 4 x^2}.
$$

When $y$-independent perturbations are considered,
solitary waves (\ref{line-soliton}) and (\ref{MTM-soliton}) are orbitally stable
in the time evolution of the massive Thirring model for every $\omega \in (-1,1)$.
The corresponding results were obtained in our  previous works \cite{PS1} in
$H^1(\mathbb{R})$ and \cite{PS2} in a weighted subspace of $L^2(\mathbb{R})$.
Note that the solitary waves in more general nonlinear Dirac equations (\ref{dirac1})
are spectrally unstable for $y$-independent perturbations if $\alpha_1 \neq 0$
but the instability region and the number of unstable eigenvalues depend
on the parameter $\omega$ \cite{CP}.

\subsection{Hexagonal potentials}

In the context of the hexagonal potentials in two spatial dimensions,
the massive Dirac equations can be derived in a different form \cite{FW3},
\begin{eqnarray}\label{dirac2}
\left\{ \begin{array}{ll}
i\partial_t \varphi_1 + i\partial_{x}\varphi_2 - \partial_{y}\varphi_2 + \varphi_1 = (\beta_1|\varphi_1|^2+\beta_2|\varphi_2|^2) \varphi_1,\\
i\partial_t\varphi_2 + i\partial_{x}\varphi_1 + \partial_{y}\varphi_1 - \varphi_2 = (\beta_2|\varphi_1|^2+\beta_1|\varphi_2|^2)\varphi_2,
\end{array} \right.
\end{eqnarray}
where $(\varphi_1,\varphi_2)$ are complex-valued amplitudes for two resonant Floquet--Bloch
modes in the hexagonal lattice and $(\beta_1,\beta_2)$ are real-valued positive parameters.
The configuration of the massive Gross--Neveu model corresponds to the constraint
$\beta_1=-\beta_2$ \cite{gross-neveu}, which is only possible if the signs of
$\beta_1$ and $\beta_2$ are no longer both positive.

The nonlinear Dirac equations (\ref{dirac2}) correspond to equations (4.4)--(4.5) in \cite{FW3}.
Derivation of these equations can also be found in \cite{Abl1,Abl2}.
Justification of the linear part of these equations is performed by Fefferman and Weinstein \cite{FW2}.

To transform the nonlinear Dirac equations \eqref{dirac2}  to the form (\ref{dirac}),
we use the change of variables,
$$
\left(\begin{matrix}u\\v\end{matrix}\right)= \frac{1}{2}\left(\begin{matrix} 1& 1\\ 1& -1\end{matrix}\right) \left(\begin{matrix}\varphi_1\\\varphi_2\end{matrix}\right),
$$
and obtain
\begin{eqnarray}\label{dirac3}
\left\{ \begin{array}{ll} i (u_t + u_x) + v + v_y = \beta_1(u|u|^2+\overline{u}v^2+2u|v|^2) + \beta_2\overline{u}(u^2-v^2),\\
i (v_t - v_x) + u - u_y = \beta_1(v|v|^2+\overline{v}u^2+2v|u|^2) + \beta_2\overline{v}(v^2-u^2). \end{array} \right.
\end{eqnarray}
In comparison with the nonlinear Dirac equations (\ref{dirac1}),
we note that both the cubic nonlinearities and the $y$-derivative diffractive
terms are different.

For the family of line solitary waves (\ref{line-soliton}), the complex-valued amplitude $U_{\omega}$ satisfies
the first-order differential equation
\begin{equation}
\label{equation-2}
i U_{\omega}' - \omega U_{\omega} +\overline{U}_{\omega} = (3\beta_1+\beta_2)U_{\omega}|U_{\omega}|^2+(\beta_1-\beta_2)\overline{U}_{\omega}^3.
\end{equation}
In what follows, we simplify our presentation again
and consider the particular configuration $\beta_1 = - \beta_2 = \frac{1}{2}$, which corresponds to
the massive Gross--Neveu model. In this case, the solitary wave solution exists for every $\omega \in (0,1)$
in the explicit form
\begin{equation}
\label{Soler-soliton}
U_{\omega}(x) = \mu \frac{\sqrt{1+\omega}\cosh(\mu x)-i\sqrt{1-\omega}\sinh(\mu x)}{1 + \omega\cosh(2 \mu x)},
\end{equation}
where $\mu = \sqrt{1-\omega^2}$. The family of solitary waves (\ref{Soler-soliton})
diverges at infinity as $\omega \to 0$ and can not be continued for $\omega \in (-1,0)$ \cite{Comech}.
As $\omega \to 1$, the family approaches the
NLS profile $U_{\omega \to 1}(x) \to 2^{-1/2} \mu {\rm sech}(\mu x)$.

When $y$-independent perturbations are considered,
solitary waves (\ref{line-soliton}) and (\ref{Soler-soliton}) are orbitally stable in $H^1(\mathbb{R})$
in the time evolution of the massive Gross--Neveu model for $\omega \approx 1$ \cite{BC}.
Numerical approximations have showed contradictory results for smaller values of $\omega$.
A numerical approach based on the Evans function computation leads to the conclusion on
the spectral stability of solitary waves for all $\omega \in (0,1)$ \cite{Comech}.
However, another approach based on the finite-difference discretization indicates existence of $\omega_c \approx 0.6$
such that the family of solitary waves is spectrally stable for $\omega \in (\omega_c,1)$ and unstable for $\omega \in (0,\omega_c)$
\cite{Saxena1,Saxena2}. The presence of additional unstable eigenvalues in the case of $y$-independent perturbations,
if they exist, is not an obstacle in our analysis of transverse stability of line solitons.

\section{Transverse stability of line solitons}

We consider two versions (\ref{dirac1}) and (\ref{dirac3}) of
the nonlinear Dirac equations for spatial variables $(x,y)$ in the domain
$\mathbb{R}\times \mathbb{T}$, where $\mathbb{T}=\mathbb{R}/(L \mathbb{Z})$ is the one dimensional torus
and $L\in \mathbb{R}$ is the period of the transverse perturbation.
To study stability of the line solitary wave (\ref{line-soliton}) under periodic transverse perturbations,
we use the Fourier series and write
\begin{equation}
\label{Fourier-expansion}
u(x,y,t) = e^{i\omega t} \left[ U_{\omega}(x) + \sum_{n\in \mathbb{Z}} \hat{f}_{n}(x,t) e^{\frac{2\pi n i y}{L}} \right].
\end{equation}
In the settings of the linearized stability theory, we are going to use the linear superposition
principle and consider just one Fourier mode with continuous parameter $p \in \mathbb{R}$.
In the context of the Fourier series (\ref{Fourier-expansion}),
the parameter $p$ takes the countable set of
values $\{ \frac{2\pi n}{L} \}_{n \in \mathbb{Z}}$. Furthermore,
for each $p \in \mathbb{R}$, we separate the time evolution of the
linearized system and introduce the spectral parameter $\lambda$ in
the decomposition $\hat{f}_{n}(x,t) = \hat{F}_{n}(x) e^{\lambda t}$.
This decomposition reduces the linearized stability problem for $\hat{f}_n$
to the spectral stability problem for $\hat{F}_n$.

Performing similar manipulations with other components of the nonlinear Dirac equations,
we set the transverse perturbation in the form
\begin{eqnarray*}
u(x,y,t) = e^{i\omega t}[U_{\omega}(x)+u_1(x)e^{\lambda t+ipy}], & \quad &
\overline{u}(x,y,t) = e^{-i\omega t}[\overline{U}_{\omega}(x)+u_2(x)e^{\lambda t+ipy}],\\
v(x,y,t) = e^{i\omega t}[\overline{U}_{\omega}(x)+v_1(x)e^{\lambda t+ipy}], & \quad &
\overline{v}(x,y,t) = e^{-i\omega t}[U_{\omega}(x)+v_2(x)e^{\lambda t+ipy}],
\end{eqnarray*}
and obtain the spectral stability problem in the form
\begin{equation} \label{linear1}
i\lambda \sigma \mathbf{U}=(D_{\omega} + E_p + W_{\omega}) \mathbf{U},
\end{equation}
where $\mathbf{U}=(u_1,u_2,v_1,v_2)^t$, $\sigma=\text{diag}(1,-1,1,-1)$,
$$
D_{\omega} = \left[\begin{matrix}-i\partial_x+\omega &0&-1&0\\
0&i\partial_x+\omega&0&-1\\
-1&0&i\partial_x+\omega&0\\
0&-1&0&-i\partial_x+\omega
\end{matrix}\right],
$$
whereas matrices $E_p$ and $W_{\omega}$ depend on the particular form
of the nonlinear Dirac equations. For the massive Thirring model (\ref{dirac1})
with $\alpha_1 = 0$ and $\alpha_2 = 1$, we have $E_p = p^2 I$ with
\begin{equation}
\label{matrix-1}
I = \left[\begin{matrix} 1 &0&0&0\\
0&1&0&0\\
0&0&1&0\\
0&0&0&1
\end{matrix}\right] \quad \mbox{\rm and} \quad
W_{\omega}= \left[\begin{matrix}
|U_{\omega}|^2 & 0 & U_{\omega}^2 & |U_{\omega}|^2 \\
0 & |U_{\omega}|^2 & |U_{\omega}|^2 & \overline{U}_{\omega}^2 \\
\overline{U}_{\omega}^2 & |U_{\omega}|^2 & |U_{\omega}|^2 & 0 \\
|U_{\omega}|^2 & U_{\omega}^2 & 0 & |U_{\omega}|^2 \end{matrix}\right].
\end{equation}
For the massive Gross--Neveu model (\ref{dirac3}) with $\beta_1 = -\beta_2 = \frac{1}{2}$,
we obtain $E_p = -i p J$ with
\begin{equation}
\label{matrix-2}
J = \left[\begin{matrix}0 &0&1&0\\
0&0&0&1\\
-1&0&0&0\\
0&-1&0&0
\end{matrix}\right] \quad \mbox{\rm and} \quad
W_{\omega}= \left[\begin{matrix}
|U_{\omega}|^2 & \bar{U}_{\omega}^2 & U_{\omega}^2 + 2\overline{U}_{\omega}^2 & |U_{\omega}|^2 \\
U_{\omega}^2 & |U_{\omega}|^2 & |U_{\omega}|^2 & 2 U_{\omega}^2 + \overline{U}_{\omega}^2 \\
2 U_{\omega}^2 +  \overline{U}_{\omega}^2 &  |U_{\omega}|^2 &  |U_{\omega}|^2 & U_{\omega}^2 \\
 |U_{\omega}|^2 &  U_{\omega}^2 + 2 \overline{U}_{\omega}^2 & \overline{U}_{\omega}^2 & |U_{\omega}|^2 \end{matrix}\right].
\end{equation}

We note here that the linear operator $D_{\omega} + E_p + W_{\omega}$ is self-adjoint in $L^2(\mathbb{R},\mathbb{C}^4)$
with the domain in $H^1(\mathbb{R},\mathbb{C}^4)$ thanks to the boundness of the potential term $W_{\omega}$.
We shall use the notation $\langle \cdot,\cdot \rangle_{L^2}$ for the inner product in $L^2(\mathbb{R},\mathbb{C}^4)$
and the notation $\| \cdot \|_{L^2}$ for the induced norm. Also note that we apply complex conjugation
to the element at the first position of the inner product $\langle \cdot,\cdot \rangle_{L^2}$.

The next elementary result shows that the zero eigenvalue is isolated from the continuous spectrum
of the spectral stability problem (\ref{linear1}) both for $E_p = p^2 I$ and $E_p = -ip J$ if
the real parameter $p$ is sufficiently small.

\begin{prop}
\label{proposition-continuous-spectrum}
For every $p \in \mathbb{R}$, the continuous spectrum of the stability problem (\ref{linear1}) is located along
the segments $\pm i \Lambda_1$ and $\pm i \Lambda_2$, where for with $E_p = p^2 I$,
\begin{equation}
\label{continuous-1}
\Lambda_1 := \left\{ \sqrt{1 + k^2} + \omega + p^2, \quad k \in \mathbb{R} \right\}, \quad
\Lambda_2 := \left\{ \sqrt{1 + k^2} - \omega - p^2, \quad k \in \mathbb{R} \right\},
\end{equation}
whereas for $E_p = -ip J$,
\begin{equation}
\label{continuous-2}
\Lambda_1 := \left\{ \sqrt{1+p^2+k^2} + \omega, \quad k \in \mathbb{R} \right\}, \quad
\Lambda_2 := \left\{ \sqrt{1+p^2+k^2} - \omega, \quad k \in \mathbb{R} \right\}.
\end{equation}
\end{prop}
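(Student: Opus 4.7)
The plan is to apply Weyl's theorem on invariance of essential (and hence continuous) spectrum under relatively compact perturbations. I would first rewrite (\ref{linear1}) as $\lambda \mathbf{U} = -i\sigma(D_\omega + E_p + W_\omega)\mathbf{U}$ and observe from (\ref{MTM-soliton}) and (\ref{Soler-soliton}) that $U_\omega(x)$ decays exponentially as $|x|\to\infty$, so the matrix potential $W_\omega(x)$, whose entries are quadratic monomials in $U_\omega,\overline{U}_\omega$, is a bounded multiplication operator vanishing at infinity. Since the resolvent of the first-order operator $-i\sigma(D_\omega+E_p)$ maps $L^2(\mathbb{R},\mathbb{C}^4)$ into $H^1(\mathbb{R},\mathbb{C}^4)$, the product $W_\omega\bigl(-i\sigma(D_\omega+E_p)-z\bigr)^{-1}$ is compact on $L^2$ by a standard Rellich-type argument. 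Consequently the continuous spectrum of (\ref{linear1}) coincides with that of the constant-coefficient problem $i\lambda\sigma\mathbf{V} = (D_\omega+E_p)\mathbf{V}$.

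For the constant-coefficient problem I would apply the Fourier transform in $x$ and look for plane-wave solutions $\mathbf{V}(x)=\mathbf{V}_0 e^{ikx}$ with $k\in\mathbb{R}$, reducing the spectral problem to the algebraic equation $i\lambda\sigma\mathbf{V}_0 = M(k,p)\mathbf{V}_0$, where $M(k,p)$ is obtained by substituting $-i\partial_x\mapsto k$ in $D_\omega+E_p$. Direct inspection of $D_\omega$ reveals that $M(k,p)$ is block-diagonal in the index pairs $\{1,3\}$ and $\{2,4\}$ in both models, and that the $\sigma$-factor acts as $+I$ on the first block and $-I$ on the second. Each block is then a $2\times 2$ eigenvalue problem solvable from trace and determinant alone.

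In the Thirring case $E_p = p^2 I$, each block has trace $2(\omega+p^2)$ and determinant $(\omega+p^2)^2 - k^2 - 1$, giving block eigenvalues $(\omega+p^2)\pm\sqrt{1+k^2}$; scanning $k\in\mathbb{R}$ and combining with the two signs coming from $\sigma$ produces the four branches $\pm i\Lambda_1\cup\pm i\Lambda_2$ of (\ref{continuous-1}). In the Gross--Neveu case $E_p = -ipJ$, the off-diagonal entries of each block become $-1\mp ip$, so the trace is $2\omega$ and the determinant is $\omega^2 - k^2 - (1+p^2)$; the block eigenvalues are $\omega\pm\sqrt{1+p^2+k^2}$, which after the same sign bookkeeping yield (\ref{continuous-2}). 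The main step requiring care is the relative compactness of $W_\omega$; this is standard for exponentially decaying matrix potentials perturbing first-order Dirac-type operators, and once it is in hand the remainder of the argument is $2\times 2$ linear algebra.
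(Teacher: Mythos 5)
Your proposal is correct and follows essentially the same route as the paper: invoke Weyl's theorem to discard the exponentially decaying potential $W_\omega$, then Fourier transform the constant-coefficient problem and solve the resulting $4\times 4$ (block-diagonal into two $2\times 2$) algebraic eigenvalue problem, whose trace/determinant computations reproduce (\ref{continuous-1}) and (\ref{continuous-2}). The only difference is that you spell out the relative-compactness justification and the $\{1,3\}$/$\{2,4\}$ block structure explicitly, which the paper leaves as ``elementary algebraic manipulations.''
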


\begin{proof}
By Weyl's lemma, the continuous spectrum of the stability problem (\ref{linear1}) coincides
with the purely continuous spectrum of the same problem with $W_{\omega} \equiv 0$,
thanks to the exponential decay of the potential terms $W_{\omega}$ as $|x| \to \infty$.
If $W_{\omega} \equiv 0$, we solve the spectral stability problem (\ref{linear1})
with the Fourier transform in $x$, which means that
we simply replace $\partial_x$ in the operator $D_{\omega}$ with $ik$ for $k \in \mathbb{R}$
and denote the resulting matrix by $D_{\omega,k}$.
As a result, we obtain the matrix eigenvalue problem
$$
(D_{\omega,k} + E_p - i \lambda \sigma) \mathbf{U}=0.
$$
After elementary algebraic manipulations, the characteristic equation for
this linear system yields four solutions for $\lambda$ given by
$\pm i \Lambda_1$ and $\pm i \Lambda_2$, where the explicit expressions for $\Lambda_1$
and $\Lambda_2$ are given by (\ref{continuous-1}) and (\ref{continuous-2})
for $E_p = p^2 I$ and $E_p = -i p J$, respectively.
\end{proof}

\begin{remark}
We note the different role of the matrix $E_p$ in the location of the continuous spectrum
for larger values of the real parameter $p$. If $E_p = p^2 I$, then the two bands
$\pm i \Lambda_2$ touches each other for $|p| = p_{\omega} := \sqrt{1-\omega}$
and overlap for $|p| > p_{\omega}$. If $E_p = -ipJ$, all the four bands do not
overlap for all values of $p \in \mathbb{R}$ and the zero point is always in the gap
between the branches of the continuous spectrum.
\end{remark}

The next result shows that if $p = 0$, then the spectral stability
problem (\ref{linear1}) admits the zero eigenvalue of quadruple multiplicity. The zero eigenvalue is
determined by the symmetries of the nonlinear Dirac equations
with respect to the spatial translation and the gauge rotation.

\begin{prop}
\label{proposition-kernel}
If $p = 0$, the stability problem (\ref{linear1}) admits exactly two eigenvectors in $H^1(\mathbb{R})$
for the eigenvalue $\lambda = 0$ given by
\begin{equation}
\label{zero-1}
{\bf U}_t = \partial_x {\bf U}_{\omega}, \quad {\bf U}_g = i \sigma {\bf U}_{\omega},
\end{equation}
where ${\bf U}_{\omega} = (U_{\omega},\bar{U}_{\omega},\bar{U}_{\omega},U_{\omega})^t$.
For each eigenvector ${\bf U}_{t,g}$, there exists a generalized eigenvector
$\tilde{\bf U}_{t,g}$ in $H^1(\mathbb{R})$ from solutions of the
inhomogeneous problem
\begin{equation}
\label{inhomogeneous-zero-1}
(D_{\omega} + W_{\omega}) {\bf U} = i \sigma {\bf U}_{t,g},
\end{equation}
in fact, in the explicit form,
\begin{equation}
\label{zero-2}
\tilde{\bf U}_{t} = i \omega x \sigma {\bf U}_{\omega} - \frac{1}{2} \tilde{\sigma} {\bf U}_{\omega}, \quad
\tilde{\bf U}_{g} = \partial_{\omega} {\bf U}_{\omega},
\end{equation}
where $\tilde{\sigma} = \mbox{\rm diag}(1,1,-1,-1)$.
Moreover, for the generalized eigenvector
$\tilde{\bf U}_{t,g}$, no solutions of
the inhomogeneous problem
\begin{equation}
\label{inhomogeneous-zero-2}
(D_{\omega} + W_{\omega}) {\bf U} = i \sigma \tilde{\bf U}_{t,g}
\end{equation}
exist in $H^1(\mathbb{R})$.
\end{prop}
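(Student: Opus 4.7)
The operator $L_0 := D_\omega + W_\omega$ is self-adjoint on $L^2(\mathbb{R},\mathbb{C}^4)$ with domain $H^1(\mathbb{R},\mathbb{C}^4)$, and by Proposition~\ref{proposition-continuous-spectrum} specialized to $p=0$ its continuous spectrum is bounded away from $0$ by $\min(1\pm\omega)>0$, so $L_0$ is Fredholm of index zero with $\operatorname{range} L_0 = (\ker L_0)^\perp$. The two kernel elements come directly from the continuous symmetries of \eqref{dirac}: differentiating the two-parameter family $\mathbf{U}_\omega(x-s)e^{i\alpha}$ at $s=\alpha=0$ produces $\mathbf{U}_t = \partial_x\mathbf{U}_\omega$ and $\mathbf{U}_g = i\sigma\mathbf{U}_\omega$ in $\ker L_0$. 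To close the kernel at dimension two, I would view $L_0 \mathbf{U}=0$ as a first-order linear ODE on $\mathbb{R}^4$, whose asymptotic matrix as $|x|\to\infty$ (with the exponentially decaying $W_\omega$ dropped) decouples into two identical $2\times 2$ blocks with characteristic exponents $\pm\mu$, $\mu=\sqrt{1-\omega^2}$. Hence exactly two fundamental solutions decay at each end of $\mathbb{R}$, so their intersection, which is $\ker L_0|_{H^1}$, has dimension at most two.

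The gauge generalized eigenvector is obtained by differentiating the stationary equation for $\mathbf{U}_\omega$ with respect to $\omega$: since $\partial_\omega D_\omega = I$, this gives $L_0\,\partial_\omega\mathbf{U}_\omega = -\mathbf{U}_\omega = i\sigma(i\sigma\mathbf{U}_\omega) = i\sigma\mathbf{U}_g$, identifying $\tilde{\mathbf{U}}_g = \partial_\omega\mathbf{U}_\omega$. The translation formula $\tilde{\mathbf{U}}_t = i\omega x\sigma\mathbf{U}_\omega - \tfrac{1}{2}\tilde\sigma\mathbf{U}_\omega$ is then verified by direct substitution, using the first-order ODE \eqref{equation-1} satisfied by $U_\omega$ to reduce the action of the potential $W_\omega$ on $\tilde\sigma\mathbf{U}_\omega$ and the product rule for $\partial_x(x\sigma\mathbf{U}_\omega)$.

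The absence of further generalized eigenvectors is the substantive part. By the Fredholm alternative for the self-adjoint $L_0$, the equation $L_0\mathbf{U} = i\sigma\tilde{\mathbf{U}}_{t,g}$ has an $H^1$ solution iff the right-hand side is $L^2$-orthogonal to $\ker L_0 = \operatorname{span}(\mathbf{U}_t,\mathbf{U}_g)$; it therefore suffices to exhibit a single nonvanishing pairing in each chain. For the gauge chain, a short calculation collapses $\langle i\sigma\tilde{\mathbf{U}}_g,\mathbf{U}_g\rangle_{L^2}$ to $2Q'(\omega)$ with $Q(\omega) = \int_{\mathbb{R}}|U_\omega|^2\,dx$, and the explicit Thirring profile \eqref{MTM-soliton} gives $Q(\omega) = 2\arccos\omega$, so $Q'(\omega) = -2/\mu \neq 0$. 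For the translation chain, integration by parts reduces $\langle i\sigma\tilde{\mathbf{U}}_t,\mathbf{U}_t\rangle_{L^2}$ to $2\omega Q(\omega) - 2\int_{\mathbb{R}}\operatorname{Im}(\bar U_\omega U_\omega')\,dx$, and explicit evaluation on \eqref{MTM-soliton}---which requires an integral of the form $\int(\omega+\cosh(2\mu x))^{-2}\,dx$ that cancels cleanly against $\omega Q(\omega)$---yields $4\mu\neq 0$.

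The main technical obstacle is this last explicit integration on the translation chain: the algebra is routine but the cancellation with $Q(\omega)$ must come out cleanly. Everything else is structural and follows from symmetries, a standard ODE decay argument at infinity, and the Fredholm property of $L_0$.
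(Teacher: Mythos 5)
Your proposal is correct and follows essentially the same route as the paper: kernel from the translational and gauge symmetries, an ODE/decay count at infinity to cap the kernel at dimension two (the paper phrases this via the constant Wronskian), explicit generalized eigenvectors, and termination of each Jordan chain by the Fredholm alternative using the nonvanishing pairings $\langle {\bf U}_{t,g},\sigma\tilde{\bf U}_{t,g}\rangle_{L^2}\neq 0$, whose values agree with the paper's formulas (\ref{nonzero-element-1})--(\ref{nonzero-element-2}). The only cosmetic differences are that you obtain $\tilde{\bf U}_g=\partial_\omega{\bf U}_\omega$ by differentiating the stationary equation in $\omega$ rather than by direct substitution, and you evaluate the pairings explicitly where the paper defers that computation to the perturbation-theory section.
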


\begin{proof}
Existence of the eigenvectors (\ref{zero-1}) follows from the two symmetries
of the massive Dirac equations and is checked by elementary substitution as
$(D_{\omega} + W_{\omega}) {\bf U}_{t,g} = {\bf 0}$.

Because $(D_{\omega} + W_{\omega})$
is a self-adjoint operator of the fourth order and solutions of $(D_{\omega} + W_{\omega}) {\bf U} = {\bf 0}$
have constant Wronskian determinant in $x$, there exists at most two spatially decaying
solutions of these homogeneous equations, which means that the stability problem (\ref{linear1}) with $p = 0$
admits exactly two eigenvectors in $H^1(\mathbb{R})$. Since
$$
\langle {\bf U}_{t,g}, \sigma {\bf U}_{t,g} \rangle_{L^2} =
\langle {\bf U}_{t,g}, \sigma {\bf U}_{g,t} \rangle_{L^2} = 0
$$
there exist solutions of the inhomogeneous problem (\ref{inhomogeneous-zero-1}) in $H^1(\mathbb{R})$.
Existence of the generalized eigenvectors (\ref{zero-2}) is checked by elementary substitution.
Finally, we have
$$
\langle {\bf U}_{t,g}, \sigma \tilde{\bf U}_{t,g} \rangle_{L^2} \neq 0, \quad
\langle {\bf U}_{t,g}, \sigma \tilde{\bf U}_{g,t} \rangle_{L^2} = 0,
$$
therefore, no solutions of the inhomogeneous problem (\ref{inhomogeneous-zero-2}) exist in $H^1(\mathbb{R})$.
\end{proof}

Our main result is formulated in the following theorem. The theorem guarantees instability of the
line solitary waves with respect to the transverse perturbations of sufficiently large period
both for the massive Thirring model and the massive Gross--Neveu model.

\begin{thm}\label{specThm}
Let $\mathcal{O} \subset (-1,1)$ be the existence interval for
the line solitary wave (\ref{line-soliton}) of the nonlinear Dirac equations (\ref{dirac}).
For every $\omega \in \mathcal{O}$, there exists $p_0 > 0$ such that
for every $p$ in $0 < |p| < p_0$, the spectral stability problem \eqref{linear1}
with either (\ref{matrix-1}) or (\ref{matrix-2}) admits at least one eigenvalue $\lambda$ with ${\rm Re}(\lambda) > 0$.
Moreover, up to a suitable normalization, as $p \to 0$,
the corresponding eigenvector ${\bf U}$ converges in $L^2(\mathbb{R})$ to ${\bf U}_t$ for the massive Thirring model with (\ref{matrix-1})
and to ${\bf U}_g$ for the massive Gross--Neveu model with (\ref{matrix-2}).

Simultaneously, there exists at least one pair of purely imaginary
eigenvalues $\lambda$ of the spectral stability problem \eqref{linear1}
and the corresponding eigenvector ${\bf U}$ converges as $p \to 0$ to
the other eigenvector of Proposition \ref{proposition-kernel}.
\end{thm}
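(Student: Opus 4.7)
My plan is a finite-dimensional Lyapunov--Schmidt reduction near the four-dimensional generalized kernel $X_1 = \Span\{\mathbf{U}_t, \mathbf{U}_g, \tilde{\mathbf{U}}_t, \tilde{\mathbf{U}}_g\}$ of the self-adjoint operator $L := D_\omega + W_\omega$ identified by Proposition \ref{proposition-kernel}. In this basis the operator $\mathcal{L} := -i\sigma L$ has exactly two rank-two Jordan blocks at the origin, namely $\mathcal{L}\mathbf{U}_{t,g} = 0$ and $\mathcal{L}\tilde{\mathbf{U}}_{t,g} = \mathbf{U}_{t,g}$. By Proposition \ref{proposition-continuous-spectrum} the zero eigenvalue is isolated from the continuous spectrum, so $L|_{(\ker L)^\perp}$ is boundedly invertible. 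Writing $\mathbf{U} = a\mathbf{U}_t + b\mathbf{U}_g + c\tilde{\mathbf{U}}_t + d\tilde{\mathbf{U}}_g + \mathbf{U}_\perp$ with $\mathbf{U}_\perp \perp \ker L$, I would solve the projection of the spectral equation $(L + E_p - i\lambda\sigma)\mathbf{U} = 0$ onto $(\ker L)^\perp$ for $\mathbf{U}_\perp$ as a smooth function of $(a,b,c,d,\lambda,p)$ by the implicit function theorem near the origin. Substituting this back and pairing with $\mathbf{U}_{t,g}$ and $\tilde{\mathbf{U}}_{t,g}$ produces a $4\times 4$ reduced bifurcation problem $M(\lambda,p)(a,b,c,d)^t = 0$ that vanishes identically at $(\lambda,p) = (0,0)$.

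I then extract the leading asymptotics of $M(\lambda,p)$ as $p \to 0$. Pairing with $\mathbf{U}_{t,g}$ gives $i\lambda\langle \mathbf{U}_{t,g}, \sigma\mathbf{U}\rangle = \langle \mathbf{U}_{t,g}, E_p\mathbf{U}\rangle$, while pairing with $\tilde{\mathbf{U}}_{t,g}$ and using $L\tilde{\mathbf{U}}_{t,g} = i\sigma\mathbf{U}_{t,g}$ closes the system with two further identities. The orthogonalities from Proposition \ref{proposition-kernel} ($\langle \mathbf{U}_{t,g}, \sigma\mathbf{U}_{t,g}\rangle = 0$, $\langle \mathbf{U}_{t,g}, \sigma\mathbf{U}_{g,t}\rangle = 0$, $\langle \mathbf{U}_{t,g}, \sigma\tilde{\mathbf{U}}_{g,t}\rangle = 0$ and $\langle \mathbf{U}_{t,g}, \sigma\tilde{\mathbf{U}}_{t,g}\rangle \neq 0$), combined with the elementary vanishings $\langle \mathbf{U}_t, \mathbf{U}_g\rangle = 0$ and $\langle \mathbf{U}_t, J\mathbf{U}_g\rangle = 0$ obtained by noting that the integrands reduce to total derivatives of $|U_\omega|^2$, $U_\omega^2$ and $\overline{U}_\omega^2$, force the natural scaling $\lambda = p\Lambda + O(p^2)$ and decouple the two Jordan chains at leading order. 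Eliminating $c$ and $d$ reduces the problem to a pair of independent scalar equations $\Lambda^2 = s_t(\omega)$ and $\Lambda^2 = s_g(\omega)$, whose coefficients are explicit ratios of $L^2$-norms of $\mathbf{U}_{t,g}$ and of the pairings $\langle \mathbf{U}_{t,g}, \sigma\tilde{\mathbf{U}}_{t,g}\rangle$, with the concrete form dictated by whether $E_p = p^2 I$ or $E_p = -ipJ$.

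The main technical obstacle is verifying the strict signs of $s_t(\omega)$ and $s_g(\omega)$ throughout the existence interval. For the Thirring model with (\ref{matrix-1}), evaluation using (\ref{MTM-soliton}) should yield $s_t(\omega) > 0$ and $s_g(\omega) < 0$ for every $\omega \in (-1,1)$, producing the unstable real pair $\lambda = \pm p\sqrt{s_t(\omega)} + O(p^2)$ with eigenvector tending to $\mathbf{U}_t$, together with a purely imaginary pair whose eigenvector tends to $\mathbf{U}_g$. For the Gross--Neveu model with (\ref{matrix-2}), evaluation using (\ref{Soler-soliton}) should instead yield $s_g(\omega) > 0$ and $s_t(\omega) < 0$ for every $\omega \in (0,1)$, interchanging the roles of the two chains so that the instability is carried by $\mathbf{U}_g$ and stability by $\mathbf{U}_t$. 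Verifying the sign uniformly in $\omega$ is the delicate point, since the integrands mix hyperbolic and algebraic expressions whose positivity is not visible without closed-form integration. Once the signs are fixed, a final application of the implicit function theorem to $M(\lambda,p) = 0$ at each of the four simple roots of the leading-order polynomial upgrades the formal expansions to smooth eigenvalue branches $\lambda(p)$ on $0 < |p| < p_0$, together with the claimed $L^2$-convergence of the corresponding eigenvectors.
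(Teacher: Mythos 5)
Your overall strategy --- a Lyapunov--Schmidt reduction about the four-dimensional generalized kernel of $D_\omega+W_\omega$, leading to a reduced bifurcation matrix whose leading-order roots give $\lambda=\pm p\sqrt{s_{t,g}(\omega)}$ --- is essentially the route the paper takes (the paper first block-diagonalizes the $4\times 4$ system into two $2\times 2$ Dirac operators via the matrix $S$, which is a convenience rather than a different idea, and the sign pattern you predict for $s_t$, $s_g$ in the two models is the correct one). However, there are two genuine gaps. First, the claim that the second pair of eigenvalues is \emph{purely imaginary} does not follow from the implicit function theorem alone: the IFT gives a unique root $\lambda(p)=\pm ip\Lambda_i+\mathcal{O}(p^2)$, but nothing in your argument prevents that root from acquiring a small real part. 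The paper closes this by combining uniqueness with the discrete symmetries of the spectral problem (Propositions \ref{proposition-symmetry} and \ref{proposition-symmetry-GN}): eigenvalues come in quadruples $\{\lambda,\bar\lambda,-\lambda,-\bar\lambda\}$ for the Thirring model and in pairs $\{\lambda,-\bar\lambda\}$ for the Gross--Neveu model, so a root that left the imaginary axis would generate a second, distinct root with the same expansion, contradicting uniqueness. The same argument is what pins the unstable Thirring pair to the real axis; for Gross--Neveu the weaker symmetry only gives $\Re\lambda\neq 0$, which is why the theorem is phrased as it is. Your proposal needs this reversibility argument; without it you prove only $\lambda=\pm ip\Lambda_i+\mathcal{O}(p^2)$, not pure imaginarity.

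Second, for the Gross--Neveu model the perturbation $E_p=-ipJ$ enters at \emph{first} order in $p$, and all the first-order solvability pairings vanish ($\langle \Phi_V^{(0)},\mathcal{P}\Phi_V^{(0)}\rangle_{L^2}=\langle \Phi_V^{(0)},\mathcal{P}\Phi_V^{(1)}\rangle_{L^2}=0$ in the paper's notation), so the reduced problem is degenerate at order $p$ and the eigenvalue splitting only appears at order $p^2$. Extracting it requires solving the auxiliary inhomogeneous equations $\mathcal{H}\check{\bf V}_{t,g}=-\mathcal{P}{\bf V}_{t,g}$ and including both these corrections and cross-chain terms ($\alpha{\bf V}_t$, $\beta{\bf V}_g$) in the eigenvector ansatz; your decomposition with ${\bf U}_\perp\perp\ker L$ can absorb the $\check{\bf V}$ part in principle, but the proposal treats the two models symmetrically and does not acknowledge that the leading-order coefficient $s_g(\omega)$ for Gross--Neveu arises from this second-order mechanism rather than from a direct pairing with $E_p$. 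Relatedly, the asserted decoupling of the two Jordan chains requires more vanishing cross-pairings than those listed in Proposition \ref{proposition-kernel} --- in particular $\langle\tilde{\bf U}_t,\sigma\tilde{\bf U}_g\rangle_{L^2}=0$, which holds only because of the parity of the explicit soliton profiles --- and these must be verified, as the paper does in (\ref{projections-matrix-0}) and the surrounding computations.
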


The proof of Theorem \ref{specThm} is based on the perturbation theory for the
Jordan block associated with the zero eigenvalue of the spectral
problem (\ref{linear1}) existing for $p = 0$, according to
Proposition \ref{proposition-kernel}. The zero eigenvalue is isolated from the continuous spectrum,
according to Proposition \ref{proposition-continuous-spectrum}.
Consequently, we do not have to deal with bifurcations from the continuous spectrum
(unlike the difficult tasks of the recent work \cite{BC}), but
can develop straightforward perturbation expansions based on a modification
of the Lyapunov--Schmidt reduction method.

A useful technical approach to the perturbation theory
for the spectral stability problem (\ref{linear1}) is based on
the block diagonalization of the $4\times 4$ matrix operator into two
$2 \times 2$ Dirac operators. This block diagonalization technique was introduced
in \cite{CP} and used for numerical approximations of eigenvalues of the spectral stability
problem for the massive Dirac equations. After the block diagonalization, each Dirac operator has a one-dimensional kernel space
induced by either translational or gauge symmetries. It enables us to uncouple
the invariant subspaces associated with the Jordan block for the zero eigenvalue
of the spectral stability problem (\ref{linear1}) with $p = 0$.

Using the self-similarity transformation matrix
$$
S=\frac{1}{\sqrt{2}}\left[\begin{matrix}1&0&1&0\\ 0&1&0&1\\0&1&0&-1\\1&0&-1&0  \end{matrix}\right]
$$
and setting $\mathbf{U} = S \mathbf{V}$, we can rewrite the
spectral stability problem \eqref{linear1} in the following form:
\begin{equation} \label{linear2}
i \lambda S^t \sigma S \mathbf{V}= S^t (D_{\omega} + E_p + W_{\omega}) S \mathbf{V},
\end{equation}
where
\begin{equation}
\label{matrix-3-4}
S^t D_{\omega} S = \left[\begin{matrix}-i\partial_x+\omega & -1 & 0 & 0\\
-1 & i\partial_x+\omega&0&0\\ 0&0& -i\partial_x+\omega&1\\
0&0&1&i\partial_x+\omega
\end{matrix}\right], \quad
S^t \sigma S = \left[\begin{matrix} 0 & 0 & 1 & 0\\ 0 & 0 & 0 & -1\\ 1 & 0 & 0 & 0 \\ 0 & -1 & 0 & 0 \end{matrix}\right],
\end{equation}
whereas the transformation of matrices $E_p$ and $W_{\omega}$ depend on the particular form
of the nonlinear Dirac equations. For the massive Thirring model (\ref{dirac1})
with $\alpha_1 = 0$ and $\alpha_2 = 1$, we have
\begin{equation}
\label{matrix-3}
S^t E_p S = p^2 \left[\begin{matrix} 1 &0&0&0\\ 0&1&0&0\\ 0&0&1&0\\ 0&0&0&1 \end{matrix}\right], \quad
S^t W_{\omega} S = \left[\begin{matrix} 2 |U_{\omega}|^2 & U_{\omega}^2 & 0 & 0  \\
\overline{U}_{\omega}^2 & 2 |U_{\omega}|^2 & 0 & 0 \\ 0 & 0 & 0 & - U_{\omega}^2 \\
0 & 0 & -\overline{U}_{\omega}^2 & 0 \end{matrix}\right].
\end{equation}
For the massive Gross--Neveu model (\ref{dirac3}) with $\beta_1 = -\beta_2 = \frac{1}{2}$,
we obtain
\begin{equation}
\label{matrix-4}
S^t E_p S = i p \left[\begin{matrix}0 &0&0&1\\ 0&0&1&0\\
0&-1&0&0\\ -1&0&0&0 \end{matrix}\right], \quad
S^t W_{\omega} S = \left[\begin{matrix}
2 |U_{\omega}|^2 & U_{\omega}^2 + 3 \bar{U}_{\omega}^2 & 0 & 0 \\
3 U_{\omega}^2 + \bar{U}_{\omega}^2 & 2 |U_{\omega}|^2 & 0 & 0 \\
0 & 0 & 0 & -U_{\omega}^2-\bar{U}_{\omega}^2 \\
0 & 0 & -U_{\omega}^2-\bar{U}_{\omega}^2 & 0 \end{matrix}\right].
\end{equation}

Let us apply the self-similarity transformation to the eigenvectors and generalized eigenvectors of
Proposition \ref{proposition-kernel}. Using $\mathbf{U} = S \mathbf{V}$, the eigenvectors
(\ref{zero-1}) become
\begin{equation}
\label{kernel-1}
\mathbf{V}_t=\left(\begin{matrix} U_{\omega}'\\\overline{U}_{\omega}' \\ 0 \\ 0 \end{matrix}\right) \quad \mbox{and} \quad
\mathbf{V}_g=i\left(\begin{matrix} 0 \\ 0 \\U_{\omega}\\-\overline{U}_{\omega}\end{matrix}\right),
\end{equation}
whereas the generalized eigenvectors (\ref{zero-2}) become
\begin{equation}
\label{kernel-2}
\tilde{\mathbf{V}}_t = i \omega x \left(\begin{matrix} 0 \\ 0 \\ U_{\omega}\\-\overline{U}_{\omega}\end{matrix}\right) -
\frac{1}{2} \left(\begin{matrix} 0 \\ 0 \\ U_{\omega}\\ \overline{U}_{\omega}\end{matrix}\right) \quad \mbox{and} \quad
\tilde{\mathbf{V}}_g = \partial_{\omega} \left(\begin{matrix}U_{\omega}\\ \overline{U}_{\omega} \\ 0 \\ 0\end{matrix}\right).
\end{equation}

Setting $\Phi_V = [{\bf V}_{t}, {\bf V}_{g}, \tilde{\bf V}_{t}, \tilde{\bf V}_{g}]$
and denoting $\mathcal{S} = S^t \sigma S$, we compute elements of the matrix of skew-symmetric inner products
between eigenvectors and generalized eigenvectors:
\begin{equation}
\label{projections-matrix-2}
\langle \Phi_V, \mathcal{S} \Phi_V \rangle_{L^2} = \left[ \begin{array}{cccc} 0 & 0 &
\langle {\bf V}_{t}, \mathcal{S} \tilde{\bf V}_{t} \rangle_{L^2} & 0 \\
0 & 0 & 0 & \langle {\bf V}_{g}, \mathcal{S} \tilde{\bf V}_{g} \rangle_{L^2} \\
\langle \tilde{\bf V}_{t}, \mathcal{S} {\bf V}_{t} \rangle_{L^2} & 0 & 0 & 0 \\
0 & \langle \tilde{\bf V}_{g}, \mathcal{S} {\bf V}_{g} \rangle_{L^2} & 0 & 0 \end{array} \right],
\end{equation}
where only nonzero elements are included. Verification of (\ref{projections-matrix-2}) is straightforward except for
the term
\begin{equation}
\label{projections-matrix-0}
\langle \tilde{\bf V}_{t}, \mathcal{S} \tilde{\bf V}_{g} \rangle_{L^2} = - i \omega \int_{\mathbb{R}} x \partial_{\omega} |U_{\omega}|^2 dx
- \frac{1}{2} \int_{\mathbb{R}} \left( \bar{U}_{\omega} \partial_{\omega} U_{\omega} - U_{\omega} \partial_{\omega} \bar{U}_{\omega} \right) dx = 0.
\end{equation}
Both integrals in (\ref{projections-matrix-0}) are zero because
$x \partial_{\omega} |U_{\omega}|^2$ and ${\rm Im}(\bar{U}_{\omega} \partial_{\omega} U_{\omega})$ are odd functions of $x$.
As for the nonzero elements, we compute them explicitly from (\ref{kernel-1}) and (\ref{kernel-2}):
\begin{equation}
\label{nonzero-element-1}
\langle {\bf V}_{t}, \mathcal{S} \tilde{\bf V}_{t} \rangle_{L^2} = - i \omega \int_{\mathbb{R}} |U_{\omega}|^2 dx + \frac{1}{2}
\int_{\mathbb{R}} \left( \bar{U}_{\omega} U_{\omega}' - U_{\omega} \bar{U}_{\omega}' \right) dx
\end{equation}
and
\begin{equation}
\label{nonzero-element-2}
\langle {\bf V}_{g}, \mathcal{S} \tilde{\bf V}_{g} \rangle_{L^2} = - i \frac{d}{d\omega} \int_{\mathbb{R}} |U_{\omega}|^2 dx.
\end{equation}

We shall now proceed separately with the proof of Theorem \ref{specThm} for the massive Thirring and Gross--Neveu models.
Moreover, we derive explicit asymptotic expressions for the eigenvalues mentioned in Theorem \ref{specThm}.

\subsection{Perturbation theory for the massive Thirring model}

In the case of the massive Thirring model with (\ref{matrix-3-4}) and (\ref{matrix-3}),
the block-diagonalized system (\ref{linear2}) can be rewritten in the explicit form
\begin{equation} \label{eqMTM}
\left( \begin{matrix} H_+ & 0 \\ 0 & H_- \end{matrix} \right) {\bf V}
+ p^2 \left( \begin{matrix} \sigma_0 & 0 \\ 0 & \sigma_0 \end{matrix} \right) {\bf V}
= i \lambda \left( \begin{matrix} 0 & \sigma_3 \\ \sigma_3 & 0 \end{matrix} \right) {\bf V},
\end{equation}
where
\begin{equation}
H_+ = \left(\begin{matrix}-i\partial_x+\omega+2|U_{\omega}|^2&-1+U_{\omega}^2 \\
-1 + \overline{U}_{\omega}^2 & i\partial_x+\omega+2|U_{\omega}|^2 \end{matrix}\right), \quad
H_- = \left(\begin{matrix}-i\partial_x+\omega & 1-U_{\omega}^2 \\
1-\overline{U}_{\omega}^2  &i\partial_x+\omega \end{matrix}\right),
\end{equation}
and the following Pauli matrices are used throughout our work:
\begin{equation}
\sigma_0 = \left( \begin{array}{cc} 1 & 0 \\ 0 & 1 \end{array} \right), \quad
\sigma_1 = \left( \begin{array}{cc} 0 & 1 \\ 1 & 0 \end{array} \right), \quad
\sigma_3 = \left( \begin{array}{cc} 1 & 0 \\ 0 & -1 \end{array} \right).
\end{equation}
Note that $H_+$ and $H_-$ are self-adjoint operators in $L^2(\mathbb{R},\mathbb{C}^2)$
with the domain in $H^1(\mathbb{R},\mathbb{C}^2)$. The operators $H_{\pm}$ satisfy
the symmetry
\begin{equation}
\label{symmetry-Dirac-1}
\sigma_1 H_{\pm} = \bar{H}_{\pm} \sigma_1,
\end{equation}
whereas the Pauli matrices satisfy the relation
\begin{equation}
\label{symmetry-Dirac-2}
\sigma_1 \sigma_1 = \sigma_3 \sigma_3 = \sigma_0, \quad \sigma_1 \sigma_3 + \sigma_3 \sigma_1 = 0,
\end{equation}
Before proving the main result of the perturbation theory for the massive Thirring model,
we note the following elementary result.

\begin{prop}
\label{proposition-symmetry}
For every $p \in \mathbb{R}$, eigenvalues $\lambda$ of the spectral problem (\ref{eqMTM})
are symmetric about the real and imaginary axes in the complex plane.
\end{prop}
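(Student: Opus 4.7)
The plan is to establish two commuting involutive symmetries of the spectrum, namely $\lambda \mapsto \bar{\lambda}$ (reflection across the real axis) and $\lambda \mapsto -\bar{\lambda}$ (reflection across the imaginary axis); composing them gives the remaining symmetry $\lambda \mapsto -\lambda$, so the spectrum is symmetric about both axes.

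First I would rewrite the spectral problem \eqref{eqMTM} compactly as $\mathcal{L}_p \mathbf{V} = i \lambda \mathcal{M} \mathbf{V}$, where $\mathcal{L}_p := \mathrm{diag}(H_+, H_-) + p^2 I_4$ and $\mathcal{M}$ is the $4 \times 4$ block matrix whose off-diagonal $2 \times 2$ blocks are $\sigma_3$. The operators $H_\pm$ are self-adjoint on $L^2(\mathbb{R},\mathbb{C}^2)$ with domain $H^1(\mathbb{R},\mathbb{C}^2)$ (as noted just before the statement), so $\mathcal{L}_p$ is self-adjoint on $L^2(\mathbb{R},\mathbb{C}^4)$ for every $p \in \mathbb{R}$, and a direct check shows that $\mathcal{M}$ is self-adjoint with $\mathcal{M}^2 = I_4$.

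Reflection across the real axis follows from the intertwining relation \eqref{symmetry-Dirac-1} together with the anticommutation $\sigma_1 \sigma_3 + \sigma_3 \sigma_1 = 0$ from \eqref{symmetry-Dirac-2}. Setting $\Sigma := \mathrm{diag}(\sigma_1, \sigma_1)$, I would verify $\Sigma \mathcal{L}_p = \bar{\mathcal{L}}_p \Sigma$ and $\Sigma \mathcal{M} = -\mathcal{M} \Sigma$. Applying $\Sigma$ to the eigenvalue equation and then taking the complex conjugate (using that $\mathcal{M}$ has real entries) produces $\mathcal{L}_p (\overline{\Sigma \mathbf{V}}) = i \bar{\lambda}\, \mathcal{M} (\overline{\Sigma \mathbf{V}})$, so $\bar{\lambda}$ is an eigenvalue with eigenvector $\overline{\Sigma \mathbf{V}}$.

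Reflection across the imaginary axis is the standard Hamiltonian symmetry. Because $\mathcal{M}^{-1} = \mathcal{M}$, the problem is equivalent to $\mathcal{A} \mathbf{V} = \lambda \mathbf{V}$ with $\mathcal{A} := -i \mathcal{M} \mathcal{L}_p$. Using self-adjointness of $\mathcal{L}_p$ and $\mathcal{M}$ and the identity $\mathcal{M}^2 = I_4$, one computes $\mathcal{A}^* = (-i \mathcal{M} \mathcal{L}_p)^* = i \mathcal{L}_p \mathcal{M} = -\mathcal{M} \mathcal{A} \mathcal{M}$, so $\mathcal{M}$ conjugates $\mathcal{A}$ to $-\mathcal{A}^*$. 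Consequently, if $\mathcal{A} \mathbf{V} = \lambda \mathbf{V}$, then $\mathcal{A}^*(\mathcal{M} \mathbf{V}) = -\lambda (\mathcal{M} \mathbf{V})$; since the spectrum of $\mathcal{A}^*$ is the complex conjugate of the spectrum of $\mathcal{A}$, the value $-\bar{\lambda}$ belongs to the spectrum of $\mathcal{A}$.

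The step I expect to need the most care is not any single algebraic identity but the adjoint calculus behind the Hamiltonian symmetry: I must ensure that $\mathcal{A}$, defined on the natural domain $H^1(\mathbb{R},\mathbb{C}^4)$, has an adjoint that coincides with $-\mathcal{M} \mathcal{A} \mathcal{M}$ on the same domain. This reduces to the self-adjointness of $\mathcal{L}_p$ already recorded and to $\mathcal{M}$ being bounded and involutive, so no additional regularity is required; once this is in place, composing the two reflections completes the proof.
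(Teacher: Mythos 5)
Your first reflection is exactly the paper's: conjugating the eigenvalue equation and applying $\Sigma = \mathrm{diag}(\sigma_1,\sigma_1)$, which intertwines $\mathcal{L}_p$ with $\bar{\mathcal{L}}_p$ by \eqref{symmetry-Dirac-1} and anticommutes with $\mathcal{M}$ by \eqref{symmetry-Dirac-2}, produces the eigenvector $(\bar{v}_2,\bar{v}_1,\bar{v}_4,\bar{v}_3)^t$ for $\bar{\lambda}$. The gap is in your second reflection. The identity $\mathcal{A}^* = -\mathcal{M}\mathcal{A}\mathcal{M}$ and the relation $\sigma(\mathcal{A}^*) = \overline{\sigma(\mathcal{A})}$ only give that $-\bar{\lambda}$ belongs to the \emph{spectrum} of $\mathcal{A}$: for a non-self-adjoint operator, an eigenvalue $-\lambda$ of $\mathcal{A}^*$ corresponds to a point of $\sigma_p(\mathcal{A}) \cup \sigma_r(\mathcal{A})$, so a priori $-\bar{\lambda}$ could sit in the residual spectrum of $\mathcal{A}$ with no eigenvector. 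Since the proposition asserts symmetry of \emph{eigenvalues}, and since its later use (ruling out a nonzero imaginary part of $\mu_p$ in the proof of Lemma \ref{lemmaMTM} by a uniqueness argument) requires an actual second eigenvector near $p\Lambda_r$, this step as written does not close. Composing it with the first reflection inherits the same defect for $-\lambda$.

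The missing ingredient is a second \emph{explicit} involution, which is what the paper uses: $T = \mathrm{diag}(1,1,-1,-1) = \mathrm{diag}(\sigma_0,-\sigma_0)$ commutes with $\mathcal{H}$ and with $p^2\mathcal{I}$ (both block-diagonal) and anticommutes with $\mathcal{M}$, so applying $T$ to the eigenvalue equation sends an eigenvector ${\bf V}=(v_1,v_2,v_3,v_4)^t$ for $\lambda$ to the eigenvector $(v_1,v_2,-v_3,-v_4)^t$ for $-\lambda$; composing with your antilinear map then gives the eigenvector $(\bar{v}_2,\bar{v}_1,-\bar{v}_4,-\bar{v}_3)^t$ for $-\bar{\lambda}$. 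With this replacement your argument is complete and coincides with the paper's. Note also that the paper's proof goes slightly beyond the bare statement: it records the reductions $v_1=\bar{v}_2$, $v_3=\pm\bar{v}_4$ satisfied by eigenvectors of simple real and purely imaginary eigenvalues and the linear independence of the four eigenvectors generated from a simple eigenvalue in the open first quadrant, facts that are invoked later; your version would need these spelled out separately when they are used.
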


\begin{proof}
It follows from symmetries (\ref{symmetry-Dirac-1}) and (\ref{symmetry-Dirac-2}) that
if $\lambda$ is an eigenvalue of the spectral problem (\ref{eqMTM}).
with the eigenvector ${\bf V} = (v_1,v_2,v_3,v_4)^t$, then $\bar{\lambda}$, $-\lambda$, and $-\bar{\lambda}$ are
also eigenvalues of the same problem with the eigenvectors $(\bar{v}_2,\bar{v}_1,\bar{v}_4,\bar{v}_3)^t$,
$(v_1,v_2,-v_3,-v_4)^t$, and $(\bar{v}_2,\bar{v}_1,-\bar{v}_4,-\bar{v}_3)^t$.
Consequently, we have the following:
\begin{itemize}
\item if $\lambda$ is a simple real nonzero eigenvalue, then the eigenvector ${\bf V}$ can be chosen to satisfy the reduction
$v_1 = \bar{v}_2$, $v_3 = \bar{v}_4$, whereas $-\lambda$ is also an eigenvalue with the eigenvector
$(v_1,v_2,-v_3,-v_4)^t = (\bar{v}_2,\bar{v}_1,-\bar{v}_4,-\bar{v}_3)^t$;
\item if $\lambda$ is a simple purely imaginary nonzero eigenvalue, then the eigenvector ${\bf V}$ can be chosen to satisfy the reduction
$v_1 = \bar{v}_2$, $v_3 = -\bar{v}_4$, whereas $\bar{\lambda}$ is also an eigenvalue with the eigenvector
$(\bar{v}_2,\bar{v}_1,\bar{v}_4,\bar{v}_3)^t = (v_1,v_2,-v_3,-v_4)^t$;
\item if a simple eigenvalue $\lambda$ occurs in the first quadrant, then the symmetry generates eigenvalues in
all other quadrants and all four eigenvectors generated by the symmetry are linearly independent.
\end{itemize}
The symmetry between eigenvalues also applies to multiple nonzero eigenvalues and the corresponding eigenvectors
of the associated Jordan blocks.
\end{proof}

For the sake of simplicity, we denote
$$
\mathcal{H} = \left( \begin{matrix} H_+ & 0 \\ 0 & H_- \end{matrix} \right), \quad
\mathcal{I} = \left( \begin{matrix} \sigma_0 & 0 \\ 0 & \sigma_0 \end{matrix} \right), \quad
\mathcal{S} = \left( \begin{matrix} 0 & \sigma_3 \\ \sigma_3 & 0 \end{matrix} \right).
$$
Setting $\Phi_V = [{\bf V}_{t}, {\bf V}_{g}, \tilde{\bf V}_{t}, \tilde{\bf V}_{g}]$ as earlier,
we note that
\begin{equation}
\label{projections-matrix-3}
\langle \Phi_V, \mathcal{I} \Phi_V \rangle_{L^2} = \left[ \begin{array}{cccc} \| {\bf V}_t \|_{L^2}^2 & 0 & 0 & 0 \\
0 & \| {\bf V}_g \|_{L^2}^2 &  0 & 0 \\ 0 & 0  & \| \tilde{\bf V}_t \|_{L^2}^2 & 0 \\
0 & 0 & 0 & \| \tilde{\bf V}_g \|_{L^2}^2 \end{array} \right],
\end{equation}
where only nonzero terms are included. Again, verification of (\ref{projections-matrix-3})
follows straightforwardly from (\ref{kernel-1}) and (\ref{kernel-2}) except for the elements
$$
\langle {\bf V}_{t}, \tilde{\bf V}_{g} \rangle_{L^2} = \int_{\mathbb{R}} \left(
\bar{U}_{\omega}' \partial_{\omega} U_{\omega} + U_{\omega}' \partial_{\omega} \bar{U}_{\omega} \right) dx = 0
$$
and
$$
\langle {\bf V}_{g}, \tilde{\bf V}_{t} \rangle_{L^2} = 2 \omega \int_{\mathbb{R}} x |U_{\omega}|^2 dx = 0.
$$
These elements are zero because $x |U_{\omega}|^2$ and ${\rm Re}(\bar{U}_{\omega}' \partial_{\omega} U_{\omega})$ are odd functions of $x$.

\begin{figure}[b!]
        \centering
               \begin{subfigure}[htbp]{0.4\textwidth}
  \includegraphics[scale=0.5]{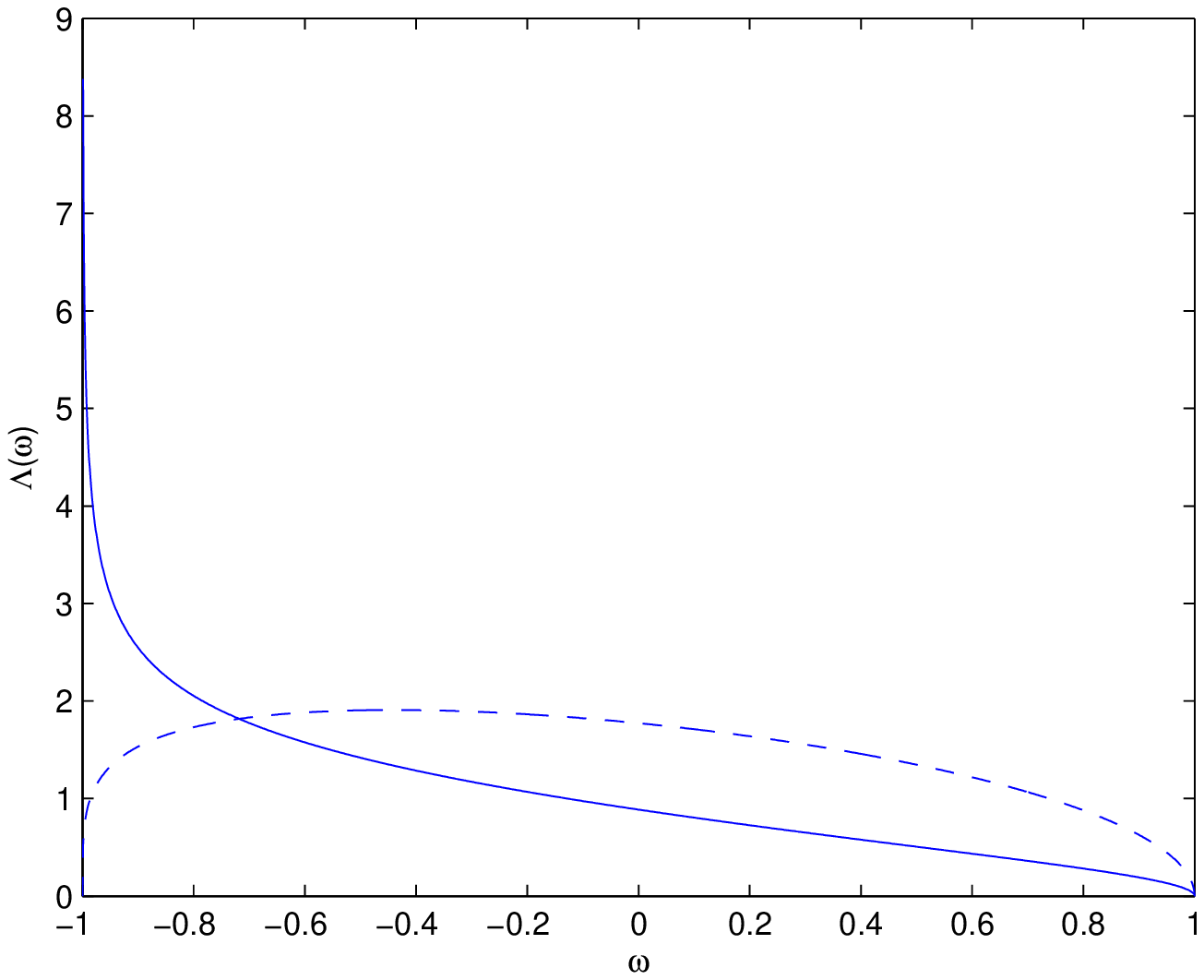}
             \caption{}
                \label{FigTheoryA}
  \end{subfigure}
    \quad \quad \quad \quad \quad
         \begin{subfigure}[htbp]{0.4\textwidth}
   \includegraphics[scale=0.5]{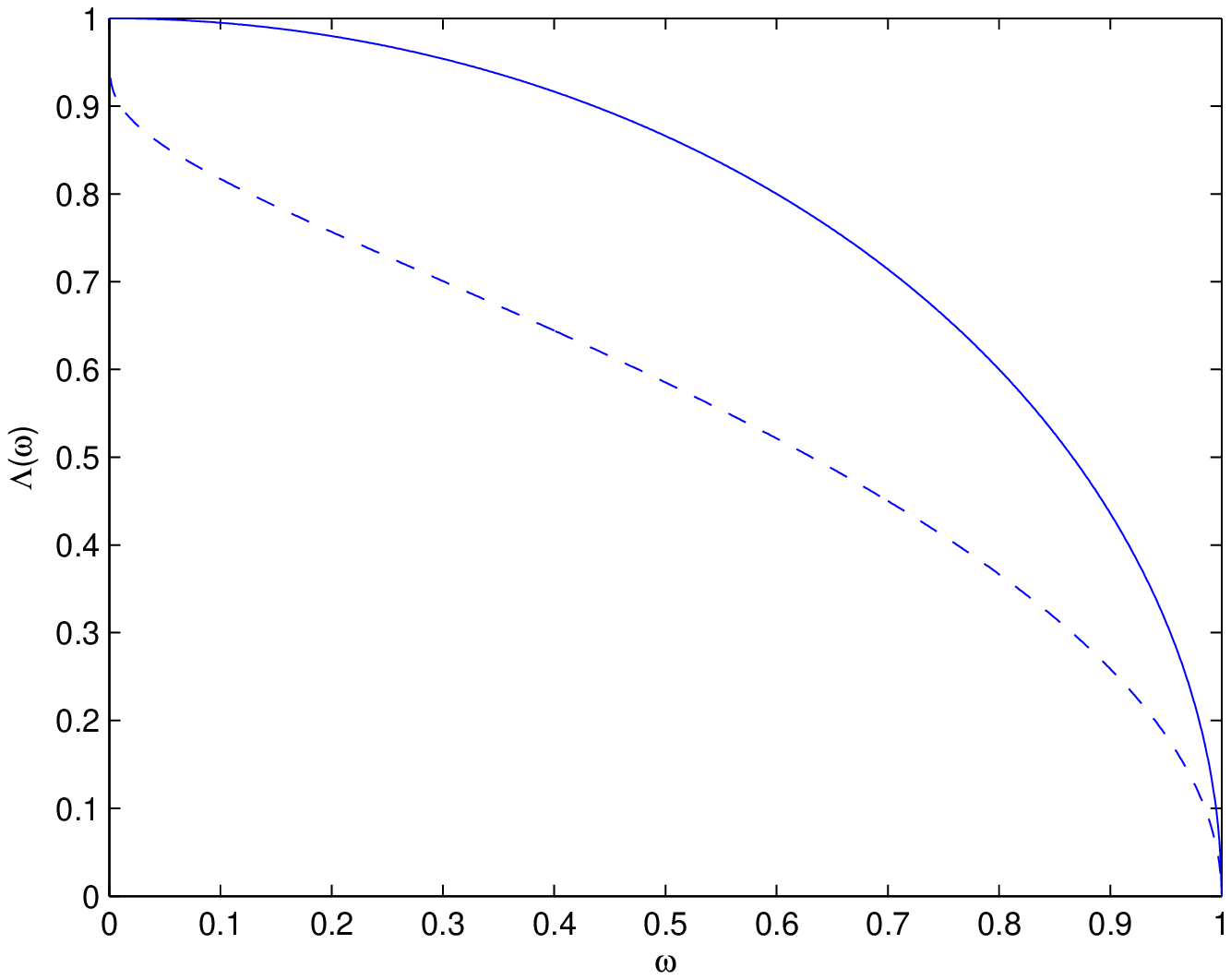}
              \caption{}
                \label{FigTheoryB}
   \end{subfigure}
             \caption{Asymptotic expressions $\Lambda_r$ (solid line) and $\Lambda_i$ (dashed line)
             versus parameter $\omega$ for the massive Thirring (left) and Gross--Neveu (right) models.}
             \label{FigTheory}
\end{figure}

The following result gives the outcome of the perturbation theory
associated with the generalized null space of the spectral stability
problem (\ref{eqMTM}). The result is equivalent to the part of Theorem \ref{specThm}
corresponding to the massive Thirring model. The asymptotic expressions
$\Lambda_r$ and $\Lambda_i$ of the real and imaginary
eigenvalues $\lambda$ at the leading order in $p$
versus parameter $\omega$ are shown on Fig. \ref{FigTheoryA}.

\begin{lem}\label{lemmaMTM}
For every $\omega \in (-1,1)$, there exists $p_0 > 0$ such that
for every $p$ with $0 < |p| < p_0$, the spectral stability problem \eqref{eqMTM}
admits a pair of real eigenvalues $\lambda$ with the eigenvectors ${\bf V} \in H^1(\mathbb{R})$
such that
\begin{equation}
\label{asymptMTM-1}
\lambda = \pm p \Lambda_r(\omega) + \mathcal{O}(p^3), \quad {\bf V} = \mathbf{V}_t
\pm p \Lambda_r(\omega) \tilde{\mathbf{V}}_t + \mathcal{O}_{H^1}(p^2) \quad \mbox{\rm as} \quad p \to 0,
\end{equation}
where $\Lambda_r = (1-\omega^2)^{-1/4} \| U_{\omega}' \|_{L^2} > 0$.
Simultaneously, it admits a pair of purely imaginary eigenvalues $\lambda$ with
the eigenvector ${\bf V} \in H^1(\mathbb{R})$
such that
\begin{equation}
\label{asymptMTM-2}
\lambda = \pm i p \Lambda_i(\omega) + \mathcal{O}(p^3), \quad
{\bf V} = \mathbf{V}_g \pm i p \Lambda_i(\omega) \tilde{\mathbf{V}}_g + \mathcal{O}_{H^1}(p^2) \quad \mbox{\rm as} \quad p \to 0,
\end{equation}
where  $\Lambda_i = \sqrt{2} (1-\omega^2)^{1/4} \| U_{\omega} \|_{L^2} > 0$.
\end{lem}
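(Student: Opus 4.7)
The plan is a Lyapunov--Schmidt reduction around the four-dimensional generalized null space of $\mathcal{H}$ at $p=0$. By Proposition \ref{proposition-continuous-spectrum} the origin is separated from the continuous spectrum of the self-adjoint operator $\mathcal{H}$, so the Fredholm decomposition $L^2(\mathbb{R},\mathbb{C}^4)=\ker\mathcal{H}\oplus\Range\mathcal{H}$ holds, and after the similarity transformation preceding Proposition \ref{proposition-kernel} the kernel is $\Span\{\mathbf{V}_t,\mathbf{V}_g\}$ with Jordan chains $\mathcal{H}\tilde{\mathbf{V}}_{t,g}=i\mathcal{S}\mathbf{V}_{t,g}$. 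I would decompose $\mathbf{V}=a\mathbf{V}_t+b\mathbf{V}_g+\mathbf{W}$ with $\mathbf{W}\in\Range\mathcal{H}$, invert $\mathcal{H}+p^2\mathcal{I}-i\lambda\mathcal{S}$ on $\Range\mathcal{H}$ for $(\lambda,p)$ near $(0,0)$ by the implicit function theorem, and substitute back to obtain a $2\times 2$ reduced eigenvalue problem in the coefficients $(a,b)$.

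To extract the leading behaviour I would expand $\mathbf{V}=a\mathbf{V}_t+b\mathbf{V}_g+p\mathbf{V}^{(1)}+p^2\mathbf{V}^{(2)}+\cdots$ and $\lambda=p\lambda_1+p^2\lambda_2+\cdots$ in \eqref{eqMTM}. At order $p$ the equation $\mathcal{H}\mathbf{V}^{(1)}=i\lambda_1\mathcal{S}(a\mathbf{V}_t+b\mathbf{V}_g)$ is automatically solvable because \eqref{projections-matrix-2} gives $\langle\mathbf{V}_{t,g},\mathcal{S}\mathbf{V}_{t,g}\rangle_{L^2}=0$, and the Jordan chains yield $\mathbf{V}^{(1)}=\lambda_1(a\tilde{\mathbf{V}}_t+b\tilde{\mathbf{V}}_g)$ modulo $\ker\mathcal{H}$. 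At order $p^2$, projecting against $\mathbf{V}_t$ and $\mathbf{V}_g$ and exploiting the block-diagonal structure of \eqref{projections-matrix-3} together with the remaining vanishings in \eqref{projections-matrix-2}, the reduced system decouples into two independent scalar dispersion relations
\begin{equation*}
i\lambda_1^2\langle\mathbf{V}_t,\mathcal{S}\tilde{\mathbf{V}}_t\rangle_{L^2}\,a=\|\mathbf{V}_t\|_{L^2}^2\,a,\qquad i\lambda_1^2\langle\mathbf{V}_g,\mathcal{S}\tilde{\mathbf{V}}_g\rangle_{L^2}\,b=\|\mathbf{V}_g\|_{L^2}^2\,b,
\end{equation*}
the first governing the translational branch $(a\neq 0,b=0)$ and the second the gauge branch $(b\neq 0,a=0)$.

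Evaluating $\lambda_1^2$ on each branch is the quantitative heart of the argument. For the gauge branch \eqref{nonzero-element-2} together with the explicit profile \eqref{MTM-soliton} yields $\|U_\omega\|_{L^2}^2=\pi-2\arctan(\omega/\mu)$ with $\mu=\sqrt{1-\omega^2}$, so $\tfrac{d}{d\omega}\|U_\omega\|_{L^2}^2=-2/\mu<0$ and $\lambda_1^2<0$, producing $\lambda_1=\pm i\Lambda_i$. For the translational branch \eqref{nonzero-element-1} contains the imaginary integral $\int(\bar{U}_\omega U_\omega'-U_\omega\bar{U}_\omega')\,dx$; I would use the stationary equation \eqref{equation-1} with $\alpha_1=0,\alpha_2=1$ to replace $U_\omega'$, express the matrix element as a linear combination of $\int|U_\omega|^2\,dx$, $\int|U_\omega|^4\,dx$, and $\int\Real(U_\omega^2)\,dx$, and evaluate each integral in closed form from \eqref{MTM-soliton}. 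Achieving the delicate cancellation that collapses $i\langle\mathbf{V}_t,\mathcal{S}\tilde{\mathbf{V}}_t\rangle_{L^2}$ to the clean value $2\sqrt{1-\omega^2}$, so that $\lambda_1^2=(1-\omega^2)^{-1/2}\|U_\omega'\|_{L^2}^2>0$, is the main technical obstacle; the rest of the argument is structural.

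Finally, the formal expansion is made rigorous by the implicit function theorem: the reduced $2\times 2$ dispersion determinant is analytic in $(\lambda,p^2)$ with simple leading roots in $\lambda_1^2$, so its four simple zeros perturb smoothly in $p$ and deliver genuine $H^1$ eigenvectors of the prescribed asymptotic shape. Because $\mathcal{H}+p^2\mathcal{I}$ depends on $p$ only through $p^2$, the spectrum is invariant under $p\mapsto -p$, so $\lambda^2$ is analytic in $p^2$; this symmetry upgrades the remainder in $\lambda=\pm p\lambda_1+\cdots$ from $O(p^2)$ to $O(p^3)$, and the eigenvectors converge in $H^1$ to $\mathbf{V}_t$ or $\mathbf{V}_g$ respectively after normalization.
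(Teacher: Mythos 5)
Your architecture is the same as the paper's: a Lyapunov--Schmidt reduction on the four-dimensional generalized null space of $\mathcal{H}$, producing the decoupled quadratic relations $i\lambda_1^2\langle{\bf V}_{t,g},\mathcal{S}\tilde{\bf V}_{t,g}\rangle_{L^2}=\|{\bf V}_{t,g}\|_{L^2}^2$, which are exactly the paper's solvability condition \eqref{solvability3} restricted to each block. There are, however, two genuine gaps. The first concerns the claims that the translational pair is \emph{exactly} real, the gauge pair \emph{exactly} imaginary, and that the remainder is $\mathcal{O}(p^3)$. Smooth perturbation of simple roots of an analytic determinant only yields $\lambda=p\Lambda_r+\mathcal{O}(p^2)$ with an a priori complex correction. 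Your appeal to the $p\mapsto-p$ invariance of $\mathcal{H}+p^2\mathcal{I}$ does not give ``$\lambda^2$ is analytic in $p^2$'': evenness in $p$ only makes the \emph{set} of eigenvalues even in $p$; to conclude that each pair has the form $\pm\lambda(p)$, so that $\lambda^2$ is single-valued in $p^2$ and the $p^2$-term in $\lambda$ vanishes, you need the fixed-$p$ spectral symmetries $\lambda\mapsto-\lambda$ and $\lambda\mapsto\bar\lambda$ coming from $\sigma_1H_{\pm}=\bar H_{\pm}\sigma_1$ and $\sigma_1\sigma_3+\sigma_3\sigma_1=0$ (Proposition \ref{proposition-symmetry}), which you never invoke. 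The paper closes this step by combining those symmetries with the uniqueness of the solution produced by the implicit function theorem: if the correction $\mu_p$ to $\Lambda_r$ had nonzero imaginary part, then $\bar\lambda$ would be a second, distinct eigenvalue with the same leading-order data, contradicting uniqueness; hence $\mu_p\in\mathbb{R}$, and the estimates of the reduction give $\mu_p=\mathcal{O}(p^2)$. You need to import this argument.

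The second gap is quantitative but not optional: the instability conclusion rests entirely on the sign of $i\langle{\bf V}_t,\mathcal{S}\tilde{\bf V}_t\rangle_{L^2}=\int_{\mathbb{R}}\left(\omega|U_{\omega}|^2-\Imag(\bar U_{\omega}U_{\omega}')\right)dx$, which you defer as ``the main technical obstacle''; without it you cannot tell which branch is real and which is imaginary. The computation is actually short: from \eqref{MTM-soliton} one has $|U_{\omega}|^2=2\mu^2/(\omega+\cosh(2\mu x))$ and $\Imag(\bar U_{\omega}U_{\omega}')=-2\mu^4/(\omega+\cosh(2\mu x))^2$, so the integrand equals $\mu\,\frac{d}{dx}\left[\sinh(2\mu x)/(\omega+\cosh(2\mu x))\right]$ and the integral is $2\mu=2\sqrt{1-\omega^2}>0$, giving $\lambda_1^2=(1-\omega^2)^{-1/2}\|U_{\omega}'\|_{L^2}^2>0$ as required. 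On the gauge branch your values $\|U_{\omega}\|_{L^2}^2=2\arccos\omega$ and $\frac{d}{d\omega}\|U_{\omega}\|_{L^2}^2=-2/\mu$ are correct, but note that they yield $\Lambda_i=(1-\omega^2)^{1/4}\|U_{\omega}\|_{L^2}$, which differs by a factor $\sqrt{2}$ from the constant stated in the Lemma (the paper's intermediate value $-1/\sqrt{1-\omega^2}$ for the derivative is inconsistent with its own closed form for the integral); the sign $\lambda_1^2<0$, and hence the conclusion that this pair is purely imaginary, is unaffected.
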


Before proving Lemma \ref{lemmaMTM}, we give formal computations of the perturbation theory,
which recover expansions (\ref{asymptMTM-1}) and (\ref{asymptMTM-2})
with explicit expressions for $\Lambda_r(\omega)$ and $\Lambda_i(\omega)$.
Consider the following formal expansions
\begin{equation}
\label{asymptMTM-3}
\lambda = p \Lambda_1 + p^2 \Lambda_2 + \mathcal{O}(p^3), \quad
{\bf V} = {\bf V}_0 + p \Lambda_1 {\bf V}_1 + p^2 {\bf V}_2 + \mathcal{O}_{H^1}(p^3),
\end{equation}
where ${\bf V}_0$ is spanned by the eigenvectors (\ref{kernel-1}), ${\bf V}_1$ is
spanned by the generalized eigenvectors (\ref{kernel-2}), and ${\bf V}_2$ satisfies
the linear inhomogeneous equation
\begin{equation}
\label{linear3}
\mathcal{H} {\bf V}_2 = - \mathcal{I} {\bf V}_0 + i \Lambda_1^2 \mathcal{S} {\bf V}_1
+ i \Lambda_2 \mathcal{S} {\bf V}_0.
\end{equation}
By the Fredholm alternative, there exists a solution ${\bf V}_2 \in H^1(\mathbb{R})$ of
the linear inhomogeneous equation (\ref{linear3}) if and only if
$\Lambda_1$ is found from the quadratic equation
\begin{equation}
\label{solvability3}
i \Lambda_1^2 \langle {\bf W}_0, \mathcal{S} {\bf V}_1
\rangle_{L^2} = \langle {\bf W}_0, {\bf V}_0 \rangle_{L^2},
\end{equation}
where ${\bf W}_0$ is spanned by the eigenvectors of $\mathcal{H}$ independently of ${\bf V}_0$.
Because of the block diagonalization of the projection matrices in (\ref{projections-matrix-2}) and (\ref{projections-matrix-3}),
the $2$-by-$2$ matrix eigenvalue problem (\ref{solvability3}) is diagonal and
we can proceed separately for each eigenvector in ${\bf V}_0$.

Selecting ${\bf V}_0 = {\bf W}_0 = \mathbf{V}_t$ and
${\bf V}_1 = \tilde{\mathbf{V}}_t$, we rewrite the solvability condition (\ref{solvability3})
as the following quadratic equation
$$
\Lambda_1^2 \int_{\mathbb{R}} \left( \omega |U_{\omega}|^2 + \frac{i}{2}
\left( \bar{U}_{\omega} U_{\omega}' - U_{\omega} \bar{U}_{\omega}' \right) \right) dx =
2 \int_{\mathbb{R}} |U_{\omega}'|^2 dx
$$
where we have used relation (\ref{nonzero-element-1}).
Substituting the exact expression (\ref{MTM-soliton}), we obtain
$$
\int_{\mathbb{R}} \left( \omega |U_{\omega}|^2 + \frac{i}{2}
\left( \bar{U}_{\omega} U_{\omega}' - U_{\omega} \bar{U}_{\omega}' \right) \right) dx =
2 \sqrt{1-\omega^2}
$$
and
$$
\int_{\mathbb{R}} |U_{\omega}'|^2 dx = -4\omega \sqrt{1-\omega^2}+4(1+\omega^2) \arctan\left(\sqrt{\frac{1-\omega}{1+\omega}}\right),
$$
which yields the expression $\Lambda_1^2 = (1-\omega^2)^{-1/2} \| U_{\omega}' \|_{L^2}^2 = \Lambda_r(\omega)^2$.

Selecting now ${\bf V}_0 = {\bf W}_0 = \mathbf{V}_g$ and
${\bf V}_1 = \tilde{\mathbf{V}}_g$,
we rewrite the solvability condition (\ref{solvability3})
as the following quadratic equation
$$
\Lambda_1^2 \frac{d}{d\omega} \int_{\mathbb{R}} |U_{\omega}|^2 dx =
2 \int_{\mathbb{R}} |U_{\omega}|^2 dx,
$$
where we have used relation (\ref{nonzero-element-2}).
Substituting the exact expression (\ref{MTM-soliton}), we obtain
$$
\int_{\mathbb{R}} |U_{\omega}|^2 dx = 4\arctan\left(\sqrt{\frac{1-\omega}{1+\omega}}\right)
$$
and
$$
\frac{d}{d\omega} \int_{\mathbb{R}} |U_{\omega}|^2 dx = -\frac{1}{\sqrt{1-\omega^2}},
$$
which yields the expression for $\Lambda_1^2 = -2 (1-\omega^2)^{1/2} \| U_{\omega} \|_{L^2}^2 = -\Lambda_i(\omega)^2$.

We shall now justify the formal expansions (\ref{asymptMTM-1}) and (\ref{asymptMTM-2}) to give the proof of Lemma \ref{lemmaMTM}.
Note that $\Lambda_2$ in (\ref{asymptMTM-3}) is not determined in the linear inhomogeneous equation (\ref{linear3}).
Nevertheless, we will show in the proof of Lemma \ref{lemmaMTM} that $\Lambda_2 = 0$.

\begin{proof1}{\em of Lemma \ref{lemmaMTM}.}
Consider the linearized operator for the spectral problem (\ref{eqMTM}):
$$
\mathcal{A}_{\lambda,p} = \mathcal{H} + p^2 \mathcal{I} - i \lambda \mathcal{S} : H^1(\mathbb{R}) \to L^2(\mathbb{R}).
$$
This operator is self-adjoint if $\lambda \in i \mathbb{R}$ and nonself-adjoint if $\lambda \notin i \mathbb{R}$.
If $p = 0$, then $\mathcal{A}_{\lambda,0}$ has the four-dimensional generalized null space $X_0 \subset L^2(\mathbb{R})$
spanned by the vectors in $\Phi_V$. By Propositions \ref{proposition-continuous-spectrum} and \ref{proposition-kernel},
the rest of spectrum of $\mathcal{A}_{\lambda,0}$ is bounded away from
zero. Consequently, there is $\lambda_0 > 0$ sufficiently small such that
$\mathcal{A}_{\lambda,0}$ with $|\lambda| < \lambda_0$ is invertible on $X_0^{\perp}$ with a bounded inverse,
see bound (\ref{resolvent-1} below.

Since $\mathcal{S} \mathcal{S} = \mathcal{I}$, the range of $\mathcal{S} \mathcal{H}$ is orthogonal
with respect to the generalized null space $Y_0 \subset L^2(\mathbb{R})$ of the adjoint operator
$\mathcal{H} \mathcal{S}$, which is spanned  by the vectors in $\mathcal{S} \Phi_V$.
Furthermore, we can apply the partition of $\Phi_V$ as
$\Phi_V^{(0)} = [{\bf V}_t, {\bf V}_g]$ and $\Phi_V^{(1)} = [\tilde{\bf V}_t, \tilde{\bf V}_g]$.
Given the computations above, we consider the Lyapunov--Schmidt decomposition in the form
\begin{equation}\label{eqMTM_decomp}
\left\{ \begin{array}{l}
\lambda = p (\Lambda + \mu_p), \\
{\bf V} = \Phi_V^{(0)} \vec{\alpha}_p + p \Phi_V^{(1)} ((\Lambda + \mu_p) \vec{\alpha}_p + \vec{\gamma}_p)+{\bf V}_p, \end{array} \right.
\end{equation}
where $\Lambda \in \mathbb{C}$ is $p$-independent, whereas $\mu_p \in \mathbb{C}$,
$\vec{\alpha}_p \in \mathbb{C}^2$, $\vec{\gamma}_p \in \mathbb{C}^2$, and ${\bf V}_p \in H^1(\mathbb{R})$ may depend on $p$.
For uniqueness of the decomposition, we use the Fredholm theory and require that
the correction term ${\bf V}_p \in H^1(\mathbb{R}) \cap Y_0^{\perp}$ satisfy the orthogonality conditions:
\begin{equation}
\label{Vp_constraint}
\langle \Phi_V, \mathcal{S} {\bf V}_p\rangle_{L^2} = 0.
\end{equation}
Substituting expansions \eqref{eqMTM_decomp} into the spectral problem \eqref{eqMTM}, we obtain
\begin{eqnarray}\nonumber
& \phantom{t} &
\left( \mathcal{H} + p^2 \mathcal{I} - i p (\Lambda + \mu_p) \mathcal{S} \right) {\bf V}_p +
p^2 \left( \Phi_V^{(0)} \vec{\alpha}_p + p \Phi_V^{(1)} ((\Lambda + \mu_p) \vec{\alpha}_p + \vec{\gamma}_p)\right) \\
\label{eqMTM_Vp} & \phantom{t} & \phantom{text} =
i p^2 (\Lambda + \mu_p) \mathcal{S} \Phi_V^{(1)} ((\Lambda + \mu_p)\vec{\alpha}_p + \vec{\gamma}_p)
- i p \mathcal{S} \Phi_V^{(0)} \vec{\gamma}_p.
\end{eqnarray}

Since $p^2 \mathcal{I}$ is a bounded self-adjoint perturbation to $\mathcal{H}$, there exist
positive constants $\lambda_0$, $p_0$, and $C_0$ such that for all $|\lambda| < \lambda_0$, $|p| < p_0$,
and all ${\bf f} \in X_0^{\perp} \subset L^2(\mathbb{R})$, there exists a unique $\mathcal{A}_{\lambda,p}^{-1} {\bf f}
\in Y_0^{\perp}$ satisfying
\begin{equation}
\label{resolvent-1}
\| \mathcal{A}_{\lambda,p}^{-1} {\bf f} \|_{L^2} \leq C_0 \| {\bf f} \|_{L^2}.
\end{equation}
Moreover, $\mathcal{A}_{\lambda,p}^{-1} {\bf f} \in H^1(\mathbb{R})$. Therefore, in order to solve
equation (\ref{eqMTM_Vp}) for ${\bf V}_p$ in $H^1(\mathbb{R}) \cap Y_0^{\perp}$,
we project the equation to $X_0^{\perp}$.
It makes sense to do so separately for $\Phi_V^{(0)}$ and $\Phi_V^{(1)}$.

Using the projection matrices (\ref{projections-matrix-2}) and (\ref{projections-matrix-3})
as well as the orthogonality conditions (\ref{Vp_constraint}), we obtain
\begin{eqnarray} \label{Eq1}
p^2 \langle \Phi_V^{(0)}, {\bf V}_p \rangle_{L^2} + p^2
\langle \Phi_V^{(0)}, \Phi_V^{(0)} \rangle_{L^2} \vec{\alpha}_p
=  i p^2 (\Lambda + \mu_p) \langle \Phi_V^{(0)}, \mathcal{S} \Phi_V^{(1)} \rangle_{L^2}
((\Lambda + \mu_p) \vec{\alpha}_p + \vec{\gamma}_p)
\end{eqnarray}
and
\begin{eqnarray} \label{Eq2}
p^2 \langle \Phi_V^{(1)}, {\bf V}_p \rangle_{L^2} + p^3
\langle \Phi_V^{(1)}, \Phi_V^{(1)} \rangle_{L^2} ((\Lambda + \mu_p) \vec{\alpha}_p + \vec{\gamma}_p )
=  -i p \langle \Phi_V^{(1)}, \mathcal{S} \Phi_V^{(0)} \rangle_{L^2} \vec{\gamma}_p.
\end{eqnarray}

The resolvent estimate (\ref{resolvent-1}) and the inverse function theorem imply
that,  under  the constraints (\ref{Eq1}) and (\ref{Eq2}),
there are positive numbers $p_1 \leq p_0$, $\mu_1$, and $C_1$ such that
for every $|p| < p_1$ and  $|\mu_p| < \mu_1$, there exists a unique solution of equation
\eqref{eqMTM_Vp} for ${\bf V}_p$ in $H^1(\mathbb{R}) \cap Y_0^{\perp}$ satisfying the estimate
\begin{equation} \label{Vp}
\| {\bf V}_p \|_{L^2} \leq C_1 \left( p^2  \| \vec{\alpha}_p \| + |p| \| \vec{\gamma}_p \|) \right).
\end{equation}
Substituting this solution to the projection equations (\ref{Eq1}) and (\ref{Eq2}),
we shall be looking for values of $\Lambda$, $\mu_p$, $\vec{\alpha}_p$, and $\vec{\gamma}_p$
for $|p| < p_1$ sufficiently small. Using the estimate (\ref{Vp}), we realize that
the leading order of the $2$-by-$2$ matrix equation (\ref{Eq1}) is
\begin{eqnarray} \label{Eq3}
\langle \Phi_V^{(0)}, \Phi_V^{(0)} \rangle_{L^2} \vec{c}
=  i \Lambda^2 \langle \Phi_V^{(0)}, \mathcal{S} \Phi_V^{(1)} \rangle_{L^2} \vec{c}, \quad \vec{c} \in \mathbb{C}^2.
\end{eqnarray}
This equation is diagonal and admits two eigenvalues for $\Lambda^2$ given by
$\Lambda_r(\omega)^2$ and $-\Lambda_i(\omega)^2$. Choosing $\Lambda^2$ being equal to
one of the two eigenvalues (which are distinct), we obtain a rank-one coefficient matrix
for the $2$-by-$2$ matrix equation (\ref{Eq1}) at the leading order.

For simplicity, let us choose $\Lambda^2 = \Lambda_r(\omega)^2$ (the other case is
considered similarly) and represent $\vec{\alpha}_p = (\alpha_p,\beta_p)^t$ and
$\vec{\gamma}_p = (\gamma_p,\delta_p)^t$. In this case, $\alpha_p$ can be normalized to
unity independently of $p$, after which the $2$-by-$2$ matrix equation (\ref{Eq1}) divided by $p^2$
is rewritten in the following explicit form
\begin{eqnarray} \label{Eq4}
\left[ \begin{array}{cc} \| {\bf V}_t \|_{L^2}^2 & 0 \\
0 & \| {\bf V}_g \|_{L^2}^2 \end{array} \right]
\left[ \begin{array}{l}\left(1 + \frac{\mu_p}{\Lambda_r}\right)^2 - 1 +
\frac{\Lambda_r + \mu_p}{\Lambda_r^2} \gamma_p \\
-\frac{\Lambda_r^2}{\Lambda_i^2} \left(1 + \frac{\mu_p}{\Lambda_r}\right)^2 \beta_p
- \beta_p - \frac{\Lambda_r + \mu_p}{\Lambda_i^2}  \delta_p  \end{array} \right] =  \langle \Phi_V^{(0)}, {\bf V}_p \rangle_{L^2}.
\end{eqnarray}

We invoke the implicit function theorem for vector functions.
It follows from the estimate (\ref{Vp}) that there are positive numbers $p_2 \leq p_1$ and $C_2$ such that
for every $|p| < p_2$, there exists a unique solution of
the $2$-by-$2$ matrix equation (\ref{Eq4}) for $\mu_p$ and $\beta_p$ satisfying the estimate
\begin{equation} \label{mu}
|\mu_p| + |\beta_p| \leq C_2 \left( \| \vec{\gamma}_p \| + \| {\bf V}_p \|_{L^2} \right) \leq
C_2 \left( \| \vec{\gamma}_p \| + p^2 \right),
\end{equation}
where the last inequality with a modified value of constant $C_2$ is due to the estimate (\ref{Vp}).

Finally, we divide the $2$-by-$2$ matrix equation (\ref{Eq2}) by $p$ and
rewrite it in the form
\begin{eqnarray} \label{Eq5}
-i \langle \Phi_V^{(1)}, \mathcal{S} \Phi_V^{(0)} \rangle_{L^2} \vec{\gamma}_p =
p \langle \Phi_V^{(1)}, {\bf V}_p \rangle_{L^2} + p^2
\langle \Phi_V^{(1)}, \Phi_V^{(1)} \rangle_{L^2} ((\Lambda + \mu_p) \vec{\alpha}_p + \vec{\gamma}_p ).
\end{eqnarray}
Thanks to the estimates (\ref{Vp}) and (\ref{mu}), the matrix equation (\ref{Eq5})
can be solved for $\vec{\gamma}_p$ by the implicit function theorem, if $p$ is sufficiently small
and $\vec{V}_p$, $\mu_p$, and $\vec{\alpha}_p$ are substituted from solutions of the previous equations.
As a result, there are positive numbers $p_3 \leq p_2$ and $C_3$ such that
for every $|p| < p_3$, there exists a unique solution of
the $2$-by-$2$ matrix equation (\ref{Eq5}) for $\vec{\gamma}_p$ satisfying the estimate
\begin{equation} \label{gamma}
\| \vec{\gamma}_p \| \leq C_3 \left( p^2 + p \| {\bf V}_p \|_{L^2} \right) \leq C_3 p^2,
\end{equation}
where the last inequality with a modified value of constant $C_3$ is due to the estimate (\ref{Vp}).

Decomposition (\ref{eqMTM_decomp}) and estimates (\ref{Vp}), (\ref{mu}), and (\ref{gamma})
justify the asymptotic expansion (\ref{asymptMTM-1}).
It remains to prove that the eigenvalue $\lambda = p(\Lambda_r + \mu_p)$ is purely real.
Since $\Lambda_r$ is real, the result holds if $\mu_p$ is real. Assume that $\mu_p$ has a
nonzero imaginary part. By Proposition \ref{proposition-symmetry}, there exists another
distinct eigenvalue of the spectral problem (\ref{eqMTM}) given by $\lambda = (p\Lambda_r + \bar{\mu}_p)$
such that $\bar{\mu}_p = \mathcal{O}(p^2)$ as $p \to 0$. However, the existence of this
distinct eigenvalue
contradicts the uniqueness of constructing of $\mu_p$ and all terms in the decomposition (\ref{eqMTM_decomp}).
Therefore, $\bar{\mu}_p = \mu_p$, so that $\lambda = p(\Lambda_r + \mu_p)$ is real.

The asymptotic expansion (\ref{asymptMTM-2}) is
proved similarly with the normalization $\beta_p = 1$ and
the choice $\Lambda^2 = -\Lambda_i(\omega)^2$ among eigenvalues of
the reduced eigenvalue problem (\ref{Eq3}).
\end{proof1}

\subsection{Perturbation theory for the massive Gross--Neveu model}

In the case of the massive Gross--Neveu model with (\ref{matrix-3-4}) and (\ref{matrix-4}),
the block-diagonalized system (\ref{linear2}) can be rewritten in the explicit form
\begin{equation} \label{eqH}
\left( \begin{matrix} H_+ & 0 \\ 0 & H_- \end{matrix} \right) {\bf V}
+ i p \left( \begin{matrix} 0 & \sigma_1 \\ -\sigma_1 & 0 \end{matrix} \right) {\bf V} =
i \lambda \left( \begin{matrix} 0 & \sigma_3 \\ \sigma_3 & 0 \end{matrix} \right) {\bf V},
\end{equation}
where $\sigma_1$ and $\sigma_3$ are the Pauli matrices, whereas
$$
H_+ = \left(\begin{matrix}-i\partial_x+\omega + 2 |U_{\omega}|^2&-1 + U_{\omega}^2 + 3\overline{U}_{\omega}^2\\
-1 + \overline{U}_{\omega}^2 + 3 U_{\omega}^2 & i\partial_x+\omega + 2|U_{\omega}|^2 \end{matrix}\right) \quad
\mbox{\rm and} \quad
H_- = \left(\begin{matrix}-i\partial_x+\omega & 1 - U_{\omega}^2 - \overline{U}_{\omega}^2 \\
1-U_{\omega}^2-\overline{U}_{\omega}^2  &i\partial_x+\omega \end{matrix}\right).
$$
We note again the symmetry relation (\ref{symmetry-Dirac-1}), which applies to the Dirac operators
$H_{\pm}$ for the massive Gross--Neveu model as well. From this symmetry, we derive the result,
which is similar to Proposition \ref{proposition-symmetry} and is proved directly.

\begin{prop}
\label{proposition-symmetry-GN}
If $\lambda$ is an eigenvalue of the spectral problem (\ref{eqH}) with $p \in \mathbb{R}$ and
the eigenvector ${\bf V} = (v_1,v_2,v_3,v_4)^t$, then $-\bar{\lambda}$ is also an eigenvalue
of the same problem with the eigenvector $(\bar{v}_2,\bar{v}_1,-\bar{v}_4,-\bar{v}_3)^t$,
whereas $\bar{\lambda}$ and $-\lambda$ are eigenvalues of the spectral problem (\ref{eqH}) with $-p \in \mathbb{R}$
and the eigenvectors $(\bar{v}_2,\bar{v}_1,\bar{v}_4,\bar{v}_3)^t$ and $(v_1,v_2,-v_3,-v_4)^t$, respectively.
Consequently, for every $p \in \mathbb{R}$, eigenvalues $\lambda$ of the spectral problem (\ref{eqH})
are symmetric about the imaginary axis.
\end{prop}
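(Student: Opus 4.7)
The plan is to verify each of the three claimed symmetries by direct substitution in the block form of (\ref{eqH}), and then observe that the final statement is just the first of these reinterpreted geometrically. I would write the problem as the coupled pair
\begin{equation*}
H_+ \vec{V}_1 + i p \, \sigma_1 \vec{V}_2 = i\lambda\, \sigma_3 \vec{V}_2, \qquad
H_- \vec{V}_2 - i p \, \sigma_1 \vec{V}_1 = i\lambda\, \sigma_3 \vec{V}_1,
\end{equation*}
where $\vec{V}_1 = (v_1,v_2)^t$ and $\vec{V}_2 = (v_3,v_4)^t$, and exploit the two algebraic identities already recorded for the model: the conjugation relation $\sigma_1 H_\pm = \bar{H}_\pm \sigma_1$ (which is (\ref{symmetry-Dirac-1}), and is straightforward to check for the $H_\pm$ appearing in (\ref{eqH})) together with $\sigma_1^2 = \sigma_0$ and $\sigma_1 \sigma_3 = -\sigma_3 \sigma_1$.

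For the second claim (eigenvalue $\bar\lambda$ at $-p$), I would take the complex conjugate of both equations and then multiply each by $\sigma_1$ on the left. The identity $\sigma_1 \bar{H}_\pm = H_\pm \sigma_1$ lets me pull $\sigma_1$ through $\bar{H}_\pm$, while $\sigma_1 \sigma_3 = -\sigma_3 \sigma_1$ produces the needed sign flip on the spectral term. Setting $\vec{W}_j := \sigma_1 \bar{\vec{V}}_j$ (that is, swap entries and conjugate) the system becomes the original system with $p$ replaced by $-p$ and $\lambda$ by $\bar\lambda$, and one reads off that the corresponding eigenvector is $(\bar v_2,\bar v_1,\bar v_4,\bar v_3)^t$. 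For the third claim (eigenvalue $-\lambda$ at $-p$), I would simply send $\vec{V}_2 \mapsto -\vec{V}_2$ and multiply the second equation by $-1$; the resulting system is again of the same form with $p \to -p$ and $\lambda \to -\lambda$, and the eigenvector is $(v_1,v_2,-v_3,-v_4)^t$.

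For the first claim, I would compose the two transformations above. Applying the sign flip to the output of the conjugation step sends $\lambda \mapsto -\bar\lambda$ while returning the problem to parameter $p$, and tracking the eigenvector through the composition yields $(\bar v_2,\bar v_1,-\bar v_4,-\bar v_3)^t$, as claimed. Finally, since $\lambda$ and $-\bar\lambda$ are reflections of each other about the imaginary axis, this last symmetry is precisely the concluding assertion of the proposition.

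I do not expect any serious obstacle: every step is a direct algebraic manipulation of $2 \times 2$ Pauli matrices and block equations, and the two key identities are already in the paper. The only point that deserves a brief check is that the perturbation matrix in (\ref{eqH}), namely $\begin{pmatrix} 0 & \sigma_1 \\ -\sigma_1 & 0 \end{pmatrix}$, behaves correctly under both transformations — under conjugation it is real, and under the block sign flip its off-diagonal blocks change sign; this is exactly what reproduces $p \to -p$ in each case.
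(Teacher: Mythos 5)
Your proposal is correct and follows essentially the same route as the paper: the paper states that Proposition \ref{proposition-symmetry-GN} "is proved directly" from the symmetry (\ref{symmetry-Dirac-1}) in the manner of Proposition \ref{proposition-symmetry}, which is exactly the combination of conjugation-plus-$\sigma_1$ and the block sign flip $\mathrm{diag}(\sigma_0,-\sigma_0)$ that you carry out, with the correct bookkeeping of how $\mathcal{P}$ forces $p \to -p$ under each map. All three eigenvector formulas and the composition yielding the $\lambda \mapsto -\bar\lambda$ symmetry at fixed $p$ check out.
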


For the sake of simplicity, we use again the notations
$$
\mathcal{H} = \left( \begin{matrix} H_+ & 0 \\ 0 & H_- \end{matrix} \right), \quad
\mathcal{P} = i \left( \begin{matrix} 0 & \sigma_1 \\ -\sigma_1 & 0 \end{matrix} \right), \quad
\mathcal{S} = \left( \begin{matrix} 0 & \sigma_3 \\ \sigma_3 & 0 \end{matrix} \right).
$$
Besides the eigenvectors (\ref{kernel-1}) and the generalized eigenvectors (\ref{kernel-2}), we need solutions
of the linear inhomogeneous equations
\begin{equation}\label{kernel-3-systems}
\mathcal{H} {\bf V} = - \mathcal{P} {\bf V}_{t,g},
\end{equation}
which are given by
\begin{equation}\label{kernel-3}
\check{\mathbf{V}}_t = -\frac{1}{2}\left(\begin{matrix} 0 \\ 0 \\ \overline{U}_{\omega} \\ -U_{\omega}\end{matrix}\right) \quad \mbox{and} \quad
\check{\mathbf{V}}_g = -\frac{1}{2\omega}\left(\begin{matrix} \overline{U}_{\omega} \\ -U_{\omega} \\ 0 \\ 0 \end{matrix}\right).
\end{equation}
The existence of these explicit expressions is checked by elementary substitution.

We apply again the partition of $\Phi_V$ as
$\Phi_V^{(0)} = [{\bf V}_t, {\bf V}_g]$ and $\Phi_V^{(1)} = [\tilde{\bf V}_t, \tilde{\bf V}_g]$.
In addition, we augment the matrix $\Phi_V$ with $\Phi_V^{(2)} = [\check{\bf V}_t, \check{\bf V}_g]$
and compute the missing entries in the projection matrices:
\begin{equation}
\label{aug-projections-1}
\langle \Phi_V^{(0)}, \mathcal{S} \Phi_V^{(2)} \rangle_{L^2} = \langle \Phi_V^{(2)}, \mathcal{S} \Phi_V^{(2)} \rangle_{L^2} =
\left[ \begin{array}{cc} 0 & 0 \\ 0 & 0 \end{array} \right],
\end{equation}
and
\begin{equation}
\label{aug-projections-2}
\langle \Phi_V^{(1)}, \mathcal{S} \Phi_V^{(2)} \rangle_{L^2} = \left[ \begin{array}{cc} 0 & 0 \\
\langle \tilde{\bf V}_g, \mathcal{S} \check{\bf V}_t  \rangle_{L^2} & 0 \end{array} \right].
\end{equation}
Indeed, in addition to the matrix elements, which are trivially zero, we check that
\begin{equation}
\label{projections-matrix-a1}
\langle {\bf V}_{g}, \mathcal{S} \check{\bf V}_{g} \rangle_{L^2} = \frac{i}{2\omega}
\int_{\mathbb{R}} \left( \bar{U}^2_{\omega} - U_{\omega}^2 \right) dx = 0,
\end{equation}
because ${\rm Im}(U_{\omega}^2)$ is an odd function of $x$, and
\begin{eqnarray}
\langle \tilde{\bf V}_{t}, \mathcal{S} \check{\bf V}_{g} \rangle_{L^2} = \frac{i}{2}
\int_{\mathbb{R}} x \left( \bar{U}^2_{\omega} - U_{\omega}^2 \right) dx + \frac{1}{4 \omega} \int_{\mathbb{R}} (\bar{U}_{\omega}^2 + U_{\omega}^2 ) dx = 0, \label{projections-matrix-a2}
\end{eqnarray}
where the exact expression (\ref{Soler-soliton}) is used. On the other hand, we have
\begin{eqnarray}
\nonumber
\langle \tilde{\bf V}_{g}, \mathcal{S} \check{\bf V}_{t} \rangle_{L^2} & = & -\frac{1}{4}
\frac{d}{d\omega} \int_{\mathbb{R}} \left( \bar{U}^2_{\omega} + U_{\omega}^2 \right) dx \\
& = & -\frac{1}{2} \frac{d}{d\omega} \log\left( \frac{1 + \omega + \sqrt{1-\omega^2}}{1 + \omega - \sqrt{1-\omega^2}} \right)
= \frac{1}{2 \omega \sqrt{1 - \omega^2}}, \label{projections-matrix-a3}
\end{eqnarray}
Similarly, we compute the zero projection matrices
\begin{equation}
\label{aug-projections-3}
\langle \Phi_V^{(0)}, \mathcal{P} \Phi_V^{(0)} \rangle_{L^2} =
\langle \Phi_V^{(0)}, \mathcal{P} \Phi_V^{(1)} \rangle_{L^2} =
\langle \Phi_V^{(1)}, \mathcal{P} \Phi_V^{(2)} \rangle_{L^2} =
\left[ \begin{array}{cc} 0 & 0 \\ 0 & 0 \end{array} \right]
\end{equation}
and the nonzero projection matrices
\begin{equation}
\label{aug-projections-4}
\langle \Phi_V^{(1)}, \mathcal{P} \Phi_V^{(1)} \rangle_{L^2} =
\left[ \begin{array}{cc} 0 & \langle \tilde{\bf V}_t, \mathcal{P} \tilde{\bf V}_g \rangle_{L^2} \\
\langle \tilde{\bf V}_g, \mathcal{P} \tilde{\bf V}_t \rangle_{L^2} & 0 \end{array} \right],
\end{equation}
\begin{equation}
\label{aug-projections-5}
\langle \Phi_V^{(0)}, \mathcal{P} \Phi_V^{(2)} \rangle_{L^2} =
\left[ \begin{array}{cc} \langle {\bf V}_t, \mathcal{P} \check{\bf V}_t \rangle_{L^2} & 0 \\
0 & \langle {\bf V}_g, \mathcal{P} \check{\bf V}_g \rangle_{L^2} \end{array} \right],
\end{equation}
and
\begin{equation}
\label{aug-projections-6}
\langle \Phi_V^{(2)}, \mathcal{P} \Phi_V^{(2)} \rangle_{L^2} =
\left[ \begin{array}{cc} 0 & \langle \check{\bf V}_t, \mathcal{P} \check{\bf V}_g \rangle_{L^2} \\
\langle \check{\bf V}_g, \mathcal{P} \check{\bf V}_t \rangle_{L^2} & 0 \end{array} \right].
\end{equation}

Indeed, the first matrix in (\ref{aug-projections-3}) is zero because the Fredholm conditions
for the inhomogeneous linear systems (\ref{kernel-3-systems}) are satisfied. The second matrix in
(\ref{aug-projections-3}) is zero because
\begin{equation}
\label{projections-matrix-a4}
\langle {\bf V}_{t}, \mathcal{P} \tilde{\bf V}_{t} \rangle_{L^2} = \frac{\omega}{2}
\int_{\mathbb{R}} \left( U_{\omega}^2 - \bar{U}^2_{\omega} \right) dx = 0
\end{equation}
and
\begin{equation}
\label{projections-matrix-a5}
\langle {\bf V}_{g}, \mathcal{P} \tilde{\bf V}_{g} \rangle_{L^2} = \frac{1}{2} \frac{d}{d \omega}
\int_{\mathbb{R}} \left( U_{\omega}^2 - \bar{U}^2_{\omega} \right) dx = 0.
\end{equation}
The third matrix in (\ref{aug-projections-3}) is zero because
\begin{equation}
\label{projections-matrix-a4-1}
\langle \tilde{\bf V}_{t}, \mathcal{P} \check{\bf V}_{g} \rangle_{L^2} = -
\int_{\mathbb{R}} x |U_{\omega}|^2 dx = 0
\end{equation}
and
\begin{equation}
\label{projections-matrix-a4-2}
\langle \tilde{\bf V}_{g}, \mathcal{P} \check{\bf V}_{t} \rangle_{L^2} = \frac{i}{2}
\int_{\mathbb{R}}\left( U_{\omega} \partial_{\omega} \bar{U}_{\omega} - \bar{U}_{\omega} \partial_{\omega} U_{\omega} \right) dx = 0.
\end{equation}

For the projection matrices (\ref{aug-projections-4}), (\ref{aug-projections-5}),
and (\ref{aug-projections-6}), we compute the nonzero elements explicitly:
\begin{eqnarray}
\label{projections-matrix-a6}
\langle \tilde{\bf V}_{t}, \mathcal{P} \tilde{\bf V}_{g} \rangle_{L^2} & = & \frac{i}{4} \frac{d}{d \omega}
\int_{\mathbb{R}} \left( U_{\omega}^2 + \bar{U}_{\omega}^2 \right) dx +
\frac{\omega}{2} \frac{d}{d \omega}
\int_{\mathbb{R}} x \left( U_{\omega}^2 - \bar{U}_{\omega}^2 \right) dx \neq 0, \\
\label{projections-matrix-a7}
\langle {\bf V}_{t}, \mathcal{P} \check{\bf V}_{t} \rangle_{L^2} & = & \frac{i}{2}
\int_{\mathbb{R}} \left( U_{\omega} \bar{U}_{\omega}' - \bar{U}_{\omega} U_{\omega}' \right) dx \neq 0,\\
\label{projections-matrix-a8}
\langle {\bf V}_{g}, \mathcal{P} \check{\bf V}_{g} \rangle_{L^2} & = & -\frac{1}{\omega}
\int_{\mathbb{R}} |U_{\omega}|^2 dx \neq 0, \\
\label{projections-matrix-a9}
\langle \check{\bf V}_{t}, \mathcal{P} \check{\bf V}_{g} \rangle_{L^2} & = & \frac{i}{4 \omega}
\int_{\mathbb{R}} \left( U_{\omega}^2 + \bar{U}_{\omega}^2 \right) dx \neq 0.
\end{eqnarray}

The following result gives the outcome of the perturbation theory
associated with the generalized null space of the spectral stability
problem (\ref{eqH}). The result is equivalent to the part of Theorem \ref{specThm}
corresponding to the massive Gross--Neveu model. The asymptotic expressions
$\Lambda_r$ and $\Lambda_i$ for the corresponding eigenvalues $\lambda$ at the leading order in $p$
versus parameter $\omega$ are shown on Fig. \ref{FigTheoryB}.

\begin{lem}\label{lemmaSoler}
For every $\omega \in (0,1)$, there exists $p_0 > 0$ such that
for every $p$ with $0 < |p| < p_0$, the spectral stability problem \eqref{eqH}
admits a pair of purely imaginary eigenvalues $\lambda$ with the eigenvectors
${\bf V} \in H^1(\mathbb{R})$ such that
\begin{equation}
\label{asymptSoler-1}
\lambda = \pm i p \Lambda_i(\omega) + \mathcal{O}(p^3), \quad
{\bf V} = \mathbf{V}_t \pm i p \Lambda_i(\omega) \tilde{\mathbf{V}}_t + p \check{\mathbf{V}}_t + p \beta {\bf V}_g
+ \mathcal{O}_{H^1}(p^2) \quad \mbox{\rm as} \quad p \to 0,
\end{equation}
where $\Lambda_i(\omega) = \sqrt{\frac{I(\omega)}{1 + I(\omega)}} > 0$ with $I(\omega) > 0$
given by the explicit expression (\ref{Lambda-i-explicit}) below and $\beta$ is uniquely defined
in \eqref{beta} below.

Simultaneously, the spectral stability problem \eqref{eqH} admits a pair of eigenvalues $\lambda$ with ${\rm Re}(\lambda) \neq 0$
symmetric about the imaginary axis, and the eigenvector ${\bf V} \in H^1(\mathbb{R})$ such that
\begin{equation}
\label{asymptSoler-2}
\lambda = \pm p \Lambda_r(\omega) + \mathcal{O}(p^3), \quad
{\bf V} = \mathbf{V}_g \pm p \Lambda_r(\omega)  \tilde{\mathbf{V}}_g + p \check{\mathbf{V}}_g + p \alpha {\bf V}_t
+ \mathcal{O}_{H^1}(p^2) \quad \mbox{\rm as} \quad p \to 0,
\end{equation}
where $\Lambda_r = (1-\omega^2)^{1/2} > 0$ and $\alpha$ is uniquely defined in \eqref{alpha} below.
\end{lem}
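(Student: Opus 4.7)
The plan is to adapt the Lyapunov--Schmidt reduction from the proof of Lemma \ref{lemmaMTM}. The essential structural difference is that the $p$-dependent perturbation $\mathcal{P}$ in \eqref{eqH} is first order in $p$ rather than second order, which forces the auxiliary vectors $\check{\bf V}_{t,g}$ defined in \eqref{kernel-3} to enter the Ansatz at order $p$ on an equal footing with the generalized eigenvectors $\tilde{\bf V}_{t,g}$. Writing $\lambda = p(\Lambda + \mu_p)$ with $\Lambda \in \mathbb{C}$ independent of $p$, I would use the decomposition
\[
{\bf V} = \Phi_V^{(0)}\vec{a}_p + p(\Lambda+\mu_p)\Phi_V^{(1)}\vec{a}_p + p\Phi_V^{(2)}\vec{a}_p + p\Phi_V^{(1)}\vec{e}_p + p\Phi_V^{(2)}\vec{f}_p + {\bf V}_p,
\]
where $\vec{a}_p,\vec{e}_p,\vec{f}_p \in \mathbb{C}^2$, $\mu_p \in \mathbb{C}$, and ${\bf V}_p \in H^1(\mathbb{R}) \cap Y_0^{\perp}$ is pinned down uniquely by the orthogonality conditions $\langle \Phi_V, \mathcal{S}{\bf V}_p \rangle_{L^2} = 0$.

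Substituting into \eqref{eqH} and using $\mathcal{H}\Phi_V^{(0)} = 0$, $\mathcal{H}\Phi_V^{(1)} = i\mathcal{S}\Phi_V^{(0)}$ from \eqref{inhomogeneous-zero-1}, and $\mathcal{H}\Phi_V^{(2)} = -\mathcal{P}\Phi_V^{(0)}$ from \eqref{kernel-3-systems}, the $O(p)$ contributions outside ${\bf V}_p$ cancel by construction of the Ansatz. The resulting equation $(\mathcal{H} + p\mathcal{P} - ip(\Lambda+\mu_p)\mathcal{S}){\bf V}_p = \mathbf{R}_p$ has an inhomogeneity $\mathbf{R}_p$ that is $O_{L^2}(p)$-bounded in $\vec{a}_p,\vec{e}_p,\vec{f}_p$. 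The isolation of the zero eigenvalue of $\mathcal{H}$ from the rest of spectrum, granted by Propositions \ref{proposition-continuous-spectrum} and \ref{proposition-kernel}, then yields a resolvent estimate analogous to \eqref{resolvent-1} and a unique ${\bf V}_p$ satisfying $\|{\bf V}_p\|_{L^2} \leq C(p^2\|\vec{a}_p\| + p\|\vec{e}_p\| + p\|\vec{f}_p\|)$.

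Projecting the equation onto $\Phi_V^{(0)}$ via $\langle \cdot,\cdot\rangle_{L^2}$, the $O(p^2)$ balance collapses to the leading eigenvalue problem
\[
\langle \Phi_V^{(0)}, \mathcal{P}\Phi_V^{(2)}\rangle_{L^2}\vec{a} = i\Lambda^2 \langle \Phi_V^{(0)}, \mathcal{S}\Phi_V^{(1)}\rangle_{L^2}\vec{a},
\]
in which both coefficient matrices are diagonal thanks to \eqref{projections-matrix-2} and \eqref{aug-projections-5}. This splits into two scalar cases: the $t$-channel, using \eqref{nonzero-element-1} and \eqref{projections-matrix-a7}, gives $\Lambda^2 = -\Lambda_i(\omega)^2 < 0$ so $\lambda \in i\mathbb{R}$; the $g$-channel, using \eqref{nonzero-element-2} and \eqref{projections-matrix-a8}, gives $\Lambda^2 = \Lambda_r(\omega)^2 = 1-\omega^2 > 0$ so ${\rm Re}(\lambda) \neq 0$. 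Explicit evaluation then requires substituting the Soler profile \eqref{Soler-soliton} to turn the ratios of inner products into the stated closed-form expressions. With $\Lambda$ fixed as one of the two distinct roots, I would normalize the corresponding component of $\vec{a}_p$ to unity and apply the implicit function theorem to the remaining projection equations (the Gross--Neveu analogues of \eqref{Eq4} and \eqref{Eq5}) to solve uniquely for $\mu_p$, for the off-diagonal component of $\vec{a}_p$ (yielding the coefficients $\alpha$ and $\beta$ in the statement), and for $\vec{e}_p,\vec{f}_p$, each of size $O(p)$ or smaller. Pure imaginariness of $\lambda$ in the $t$-channel follows because $-\bar\lambda$ lies in the same small neighborhood of $ip\Lambda_i$ as $\lambda$ by Proposition \ref{proposition-symmetry-GN}, so the uniqueness of the construction forces $\lambda = -\bar\lambda$; in the $g$-channel, $-\bar\lambda$ lies instead near the opposite root $-p\Lambda_r$, which is the source of the claimed second eigenvalue symmetric about the imaginary axis.

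The main obstacle will be the bookkeeping required by having three auxiliary families $\Phi_V^{(0)},\Phi_V^{(1)},\Phi_V^{(2)}$ contributing simultaneously to each order in $p$, together with the nonzero off-diagonal coupling $\langle \tilde{\bf V}_g, \mathcal{S}\check{\bf V}_t\rangle_{L^2}$ from \eqref{aug-projections-2} that mixes the $t$- and $g$-channels. This mixing has no analogue in the Thirring proof and is precisely the mechanism forcing the transverse components $p\alpha{\bf V}_t$ and $p\beta{\bf V}_g$ in \eqref{asymptSoler-1}--\eqref{asymptSoler-2} to be nontrivial. Executing the solvability cascade while retaining this coupling requires an estimate chain analogous to \eqref{Vp}--\eqref{gamma} but with three vector unknowns in the reduced system, and careful sequencing of the implicit function theorem applications to avoid circular dependencies between $\mu_p$, $\vec{a}_p$, $\vec{e}_p$, and $\vec{f}_p$.
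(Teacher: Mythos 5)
Your proposal follows essentially the same route as the paper: the same augmented Ansatz in which the vectors $\check{\bf V}_{t,g}$ of \eqref{kernel-3} enter at order $p$ alongside $\tilde{\bf V}_{t,g}$, the same leading-order solvability condition $i\Lambda^2\langle \Phi_V^{(0)},\mathcal{S}\Phi_V^{(1)}\rangle_{L^2}\vec{a}=\langle \Phi_V^{(0)},\mathcal{P}\Phi_V^{(2)}\rangle_{L^2}\vec{a}$ (which is exactly \eqref{solvabilitySoler} after the zero projections \eqref{aug-projections-1} and \eqref{aug-projections-3} are removed), the same channel assignments with the correct sign of $\Lambda^2$ in each, the same Lyapunov--Schmidt/resolvent justification carried over from Lemma \ref{lemmaMTM}, and the same use of Proposition \ref{proposition-symmetry-GN} to pin the $t$-channel eigenvalues to the imaginary axis while leaving only the reflection symmetry for the $g$-channel. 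The identification of the off-diagonal couplings as the source of the $p\alpha{\bf V}_t$ and $p\beta{\bf V}_g$ terms, determined at the next order of the solvability cascade, likewise matches the paper's computation leading to \eqref{alpha} and \eqref{beta}.
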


We proceed with formal expansions, which are similar to the expansions \eqref{asymptMTM-3}.
However, because the $\mathcal{O}(p)$ terms appear explicitly in the spectral stability
problem (\ref{eqH}), we introduce the modified expansions as follows,
\begin{equation}
\label{asymptSoler}
\lambda = p \Lambda_1 + p^2 \Lambda_2 + \mathcal{O}(p^3), \quad
{\bf V} = {\bf V}_0 + p (\Lambda_1 {\bf V}_1+\check{{\bf V}}_1 + {\bf V}_0') + p^2 {\bf V}_2 + \mathcal{O}_{H^1}(p^3),
\end{equation}
where ${\bf V}_0$ and ${\bf V}_0'$ are spanned independently by the eigenvectors (\ref{kernel-1}), ${\bf V}_1$ is
spanned by the generalized eigenvectors (\ref{kernel-2}),
$\check{{\bf V}}_1$ is spanned by the vectors \eqref{kernel-3}, and ${\bf V}_2$ satisfies
the linear inhomogeneous equation
\begin{equation}
\label{linearSoler}
\mathcal{H} {\bf V}_2 = (i \Lambda_1 \mathcal{S} - \mathcal{P}) (\Lambda_1{\bf V}_1+\check{{\bf V}}_1 + {\bf V}'_0)
+ i \Lambda_2 \mathcal{S} {\bf V}_0.
\end{equation}
By the Fredholm alternative, there exists a solution ${\bf V}_2 \in H^1(\mathbb{R})$ of
the linear inhomogeneous equation (\ref{linearSoler}) if and only if
$\Lambda_1$ is found from the quadratic equation
\begin{equation}
\label{solvabilitySoler}
\langle {\bf W}_0, (i \Lambda_1 \mathcal{S} - \mathcal{P}) (\Lambda_1{\bf V}_1+\check{{\bf V}}_1 + {\bf V}_0')\rangle_{L^2} = 0,
\end{equation}
where ${\bf W}_0$ is again spanned  by the eigenvectors of $\mathcal{H}$ independently of ${\bf V}_0$.
Similar to the example of the massive Thirring model, the matrix eigenvalue problem (\ref{solvabilitySoler})
is diagonal with respect to the translational and gauge symmetries. As a result, subsequent computations
can be constructed independently for the two corresponding eigenvectors.

Selecting ${\bf V}_0 = {\bf W}_0 = {\bf V}_g$, ${\bf V}_1 = \tilde{\mathbf{V}}_g$, $\check{\bf V}_1 = \check{\bf V}_g$, and
${\bf V}_0' = \alpha {\bf V}_t$, we use (\ref{projections-matrix-2}), (\ref{nonzero-element-2}), (\ref{aug-projections-1}),
(\ref{aug-projections-3}), (\ref{aug-projections-5}), and (\ref{projections-matrix-a8})
in the solvability condition \eqref{solvabilitySoler}  and obtain the quadratic equation for $\Lambda_1$ in the explicit form
\begin{equation} \label{quadSoler1}
\Lambda_1^2 \frac{d}{d\omega}\int_{\mathbb{R}} |U_{\omega}|^2dx
+ \frac{1}{\omega} \int_{\mathbb{R}} |U_{\omega}|^2dx=0.
\end{equation}
Using the explicit expression (\ref{Soler-soliton}), we obtain
$$
\int_{\mathbb{R}} |U_{\omega}|^2dx = \frac{\sqrt{1-\omega^2}}{\omega}, \quad
\frac{d}{d\omega}\int_{\mathbb{R}} |U_{\omega}|^2dx = -\frac{1}{\omega^2 \sqrt{1 - \omega^2}}.
$$
As a result, we find from \eqref{quadSoler1} that $\Lambda_1^2 = 1- \omega^2 = \Lambda_r(\omega)^2$. Correction terms
$\Lambda_2$ and $\alpha$ are not determined up to this order of the asymptotic expansion.

Selecting now ${\bf V}_0 = {\bf W}_0 = {\bf V}_t$, ${\bf V}_1 = \tilde{\mathbf{V}}_t$, $\check{\bf V}_1 = \check{\bf V}_t$,
and ${\bf V}_0' = \beta {\bf V}_g$, we use (\ref{projections-matrix-2}), (\ref{nonzero-element-1}), (\ref{aug-projections-1}),
(\ref{aug-projections-3}), (\ref{aug-projections-5}), and (\ref{projections-matrix-a7})
in the solvability condition \eqref{solvabilitySoler}
and obtain the quadratic equation for $\Lambda_1$ in the explicit form
\begin{eqnarray}
\Lambda_1^2 \int_{\mathbb{R}} \left[  \omega |U_{\omega}|^2 + \frac{i}{2} (\bar{U}_{\omega} U_{\omega}' - U_{\omega} \bar{U}_{\omega}' ) \right] dx
+ \frac{i}{2} \int_{\mathbb{R}} (\bar{U}_{\omega} U_{\omega}' - U_{\omega} \bar{U}_{\omega}' ) dx = 0. \label{quadSoler2}
\end{eqnarray}
Expressing
$$
\frac{i}{2} \int_{\mathbb{R}} (\bar{U}_{\omega} U_{\omega}' - U_{\omega} \bar{U}_{\omega}' ) dx =
\int_{\mathbb{R}} \frac{(1-\omega^2)^2}{(1 + \omega \cosh(2 \mu x))^2} dx = \sqrt{1-\omega^2} I(\omega),
$$
where
\begin{equation}
\label{Lambda-i-explicit}
I(\omega) := (1 - \omega^2) \int_{0}^{\infty} \frac{dz}{(1 + \omega \cosh(z))^2}=1-\frac{1}{\sqrt{1-\omega^2}}\log\left(\frac{1-\sqrt{1-\omega^2}}{\omega}\right) > 0,
\end{equation}
we obtain from \eqref{quadSoler2} that $\Lambda_1^2 = -\frac{I(\omega)}{1 + I(\omega)} = -\Lambda_i(\omega)^2$.
Again, correction terms
$\Lambda_2$ and $\beta$ are not determined up to this order of the asymptotic expansion.

Justification of the formal expansion (\ref{asymptSoler}) and the proof of Lemma \ref{lemmaSoler}
is achieved by exactly the same argument as in the proof of Lemma \ref{lemmaMTM}. The proof relies
on the resolvent estimate (\ref{resolvent-1}), which is valid for the massive Gross--Neveu model,
because by Propositions \ref{proposition-continuous-spectrum} and \ref{proposition-kernel},
the zero eigenvalue of the operator $\mathcal{H} - i \lambda \mathcal{S}$ (which has algebraic multiplicity four) is isolated
from the rest of the spectrum.

Persistence of eigenvalues is proved with the symmetry in Proposition \ref{proposition-symmetry-GN}.
If an eigenvalue is expressed as $\lambda = p (i \Lambda_i(\omega) + \mu_p)$ with unique $\mu_p = \mathcal{O}(p)$ and
$\Lambda_i(\omega) > 0$, then nonzero real part of $\mu_p$ would contradict the symmetry of eigenvalues
about the imaginary axis. Therefore, ${\rm Re}(\mu_p) = 0$ and the eigenvalues in the expansion
(\ref{asymptSoler-1}) remain on the imaginary axis.
On the other hand, if another eigenvalue is expressed as
$\lambda = p (\Lambda_r(\omega) + \mu_p)$ with unique $\mu_p = \mathcal{O}(p)$ and
$\Lambda_r(\omega) > 0$, then $\mu_p$ may have in general a nonzero imaginary part, as it does not
contradict the symmetry of Proposition \ref{proposition-symmetry-GN} for a fixed $p \neq 0$. This is why
the statement of Lemma \ref{lemmaSoler} does not guarantee that the corresponding eigenvalues
in the expansion (\ref{asymptSoler-2}) are purely real.

In the end of this section, we will show that $\mu_p = \mathcal{O}(p^2)$, which justifies
the $\mathcal{O}(p^3)$ bound for the eigenvalues in the asymptotic expansions (\ref{asymptSoler-1}) and (\ref{asymptSoler-2}).
In this procedure, we will uniquely determine the parameters $\beta$ and $\alpha$
in the same asymptotic expansions.
Extending the expansion (\ref{asymptSoler}) to $p^3 \Lambda_3$ and $p^3 {\bf V}_3$ terms, we obtain
the linear inhomogeneous equation
\begin{equation}
\label{linearSoler-higher}
\mathcal{H} {\bf V}_3 = (i \Lambda_1 \mathcal{S} - \mathcal{P}) {\bf V}_2
+ i \Lambda_2 \mathcal{S} (\Lambda_1{\bf V}_1+\check{{\bf V}}_1 + {\bf V}_0')
+ i \Lambda_3 \mathcal{S} {\bf V}_0.
\end{equation}
The Fredholm solvability condition
\begin{equation}
\label{solvabilitySoler-higher}
\langle {\bf W}_0, (i \Lambda_1 \mathcal{S} - \mathcal{P}) {\bf V}_2 + i \Lambda_2 \mathcal{S}
(\Lambda_1{\bf V}_1+\check{{\bf V}}_1 + {\bf V}_0')\rangle_{L^2} = 0
\end{equation}
determines the correction terms $\Lambda_2$, $\beta$, and $\alpha$ uniquely.
Indeed, using (\ref{projections-matrix-2}) and (\ref{aug-projections-1}),
we rewrite the solvability condition (\ref{solvabilitySoler-higher}) in the form
\begin{eqnarray*}
i \langle {\bf W}_0, \mathcal{S} {\bf V}_1 \rangle_{L^2} \Lambda_2 \Lambda_1 & = & -
\langle {\bf W}_0, (i \Lambda_1 \mathcal{S} - \mathcal{P}) {\bf V}_2 \rangle_{L^2} \\
& = & -\langle (-i \bar{\Lambda}_1 \mathcal{S} - \mathcal{P}) {\bf W}_0,  {\bf V}_2 \rangle_{L^2} \\
& = & -\langle \mathcal{H} (-\bar{\Lambda}_1 {\bf W}_1 + \check{{\bf W}}_1),  {\bf V}_2 \rangle_{L^2} \\
& = & -\langle (-\bar{\Lambda}_1 {\bf W}_1 + \check{{\bf W}}_1), \mathcal{H}  {\bf V}_2 \rangle_{L^2} \\
& = & -\langle (-\bar{\Lambda}_1 {\bf W}_1 + \check{{\bf W}}_1), i \Lambda_2 \mathcal{S} {\bf V}_0 + (i \Lambda_1 \mathcal{S} - \mathcal{P})
(\Lambda_1 {\bf V}_1 + \check{\bf V}_1 + {\bf V}_0') \rangle_{L^2},
\end{eqnarray*}
where we have used the linear inhomogeneous equation (\ref{linearSoler})
and have introduced ${\bf W}_1$ and $\check{\bf W}_1$ from solutions
of the inhomogeneous equations $\mathcal{H} {\bf W}_1 = i \mathcal{S} {\bf W}_0$
and $\mathcal{H} \check{\bf W}_1 = - \mathcal{P} {\bf W}_0$.
Using
$$
\langle {\bf W}_1, i \mathcal{S} {\bf V}_0 \rangle_{L^2} = \langle {\bf W}_1, \mathcal{H} {\bf V}_1 \rangle_{L^2} =
\langle \mathcal{H} {\bf W}_1, {\bf V}_1 \rangle_{L^2} =
\langle i \mathcal{S} {\bf W}_0, {\bf V}_1 \rangle_{L^2} = -i \langle {\bf W}_0, \mathcal{S} {\bf V}_1 \rangle_{L^2}
$$
and
$$
\langle \check{\bf W}_1, i \mathcal{S} {\bf V}_0 \rangle_{L^2} = \langle \check{\bf W}_1, \mathcal{H} {\bf V}_1 \rangle_{L^2} =
\langle \mathcal{H} \check{\bf W}_1, {\bf V}_1 \rangle_{L^2} =
-\langle \mathcal{P} {\bf W}_0, {\bf V}_1 \rangle_{L^2} = -\langle {\bf W}_0, \mathcal{P} {\bf V}_1 \rangle_{L^2} = 0,
$$
where the last equality is due to (\ref{aug-projections-3}), we rewrite
the solvability equation in the form
\begin{eqnarray}
2 i \langle {\bf W}_0, \mathcal{S} {\bf V}_1 \rangle_{L^2} \Lambda_2 \Lambda_1 =
-\langle (-\bar{\Lambda}_1 {\bf W}_1 + \check{{\bf W}}_1), (i \Lambda_1 \mathcal{S} - \mathcal{P})
(\Lambda_1 {\bf V}_1 + \check{\bf V}_1 + {\bf V}_0') \rangle_{L^2}.\label{solv-eq-again-1}
\end{eqnarray}
Removing zero entries by using (\ref{projections-matrix-2}), (\ref{aug-projections-1}), and (\ref{aug-projections-3}),
we rewrite equation (\ref{solv-eq-again-1}) in the form
\begin{eqnarray}
\nonumber
2 i \langle {\bf W}_0, \mathcal{S} {\bf V}_1 \rangle_{L^2} \Lambda_2 \Lambda_1 & = &
\Lambda_1^2 \left( i \langle {\bf W}_1, \mathcal{S} {\bf V}_0' \rangle_{L^2}
+ i \langle {\bf W}_1, \mathcal{S} \check{\bf V}_1 \rangle_{L^2} - i \langle \check{\bf W}_1, \mathcal{S} {\bf V}_1 \rangle_{L^2}
- \langle {\bf W}_1, \mathcal{P} {\bf V}_1 \rangle_{L^2} \right) \\
& \phantom{t} & + \langle \check{\bf W}_1, \mathcal{P} \check{\bf V}_1 \rangle_{L^2}
+ \langle \check{\bf W}_1, \mathcal{P} {\bf V}_0' \rangle_{L^2}. \label{solv-eq-again-2}
\end{eqnarray}
We shall now write equation (\ref{solv-eq-again-2}) explicitly as the $2$-by-$2$ matrix equation by using
${\bf V}_0 = {\bf W}_0 = \Phi_V^{(0)}$, ${\bf V}_1 = {\bf W}_1 = \Phi_V^{(1)}$,
$\check{\bf V}_1 = \check{\bf W}_1 = \Phi_V^{(2)}$, and
$$
{\bf V}_0' = \Phi_V^{(0)} \left[ \begin{array}{cc} 0 & \alpha \\ \beta & 0 \end{array} \right] =
\left[ \beta {\bf V}_g, \alpha {\bf V}_t \right].
$$
Using (\ref{projections-matrix-2}), (\ref{aug-projections-2}), (\ref{aug-projections-4}), (\ref{aug-projections-5}),
and (\ref{aug-projections-6}), we rewrite equation (\ref{solv-eq-again-2}) in the matrix form
\begin{eqnarray}
\nonumber
2i \left[\begin{matrix} \langle {\bf V}_t, \mathcal{S} \tilde{\bf V}_t \rangle_{L^2} & 0 \\
0 & \langle {\bf V}_g, \mathcal{S} \tilde{\bf V}_g \rangle_{L^2} \end{matrix} \right] \Lambda_2\Lambda_1 & = &
i \Lambda_1^2
\left[ \begin{matrix} \langle \tilde{\bf V}_t, \mathcal{S} {\bf V}_t \rangle_{L^2} & 0 \\
0 & \langle \tilde{\bf V}_g, \mathcal{S} {\bf V}_g \rangle_{L^2} \end{matrix} \right]
\left[ \begin{array}{cc} 0 & \alpha \\ \beta & 0 \end{array} \right]\\
\nonumber & \phantom{t} &
+ \left[ \begin{matrix} \langle \check{\bf V}_t, \mathcal{P} {\bf V}_t \rangle_{L^2} & 0 \\
0 & \langle \check{\bf V}_g, \mathcal{P} {\bf V}_g \rangle_{L^2} \end{matrix} \right]
\left[ \begin{array}{cc} 0 & \alpha \\ \beta & 0 \end{array} \right] \\
\nonumber & \phantom{t} &
+ i \Lambda_1^2 \left[ \begin{array}{cc} 0 & -\langle \check{\bf V}_t, \mathcal{S} \tilde{\bf V}_g  \rangle_{L^2} \\
\langle \tilde{\bf V}_g, \mathcal{S} \check{\bf V}_t  \rangle_{L^2} & 0 \end{array} \right]\\
\nonumber & \phantom{t} &
- \Lambda_1^2 \left[ \begin{array}{cc} 0 & \langle \tilde{\bf V}_t, \mathcal{P} \tilde{\bf V}_g \rangle_{L^2} \\
\langle \tilde{\bf V}_g, \mathcal{P} \tilde{\bf V}_t \rangle_{L^2} & 0 \end{array} \right] \\
& \phantom{t} &
+ \left[ \begin{array}{cc} 0 & \langle \check{\bf V}_t, \mathcal{P} \check{\bf V}_g \rangle_{L^2} \\
\langle \check{\bf V}_g, \mathcal{P} \check{\bf V}_t \rangle_{L^2} & 0 \end{array} \right], \label{matrix-special}
\end{eqnarray}
where $\Lambda_1$ is defined uniquely from either solution of the quadratic equations (\ref{quadSoler1}) and (\ref{quadSoler2}).
Because the $2$-by-$2$ matrix on the right-hand side of equation (\ref{matrix-special})
is anti-diagonal, we obtain $\Lambda_2 = 0$ for every choice of $\Lambda_1$.

Now, we check that the coefficients $\alpha$ and $\beta$ are uniquely
determined from the right-hand side of the matrix equation (\ref{matrix-special}).
The coefficient $\alpha$ is determined for
$\Lambda_1^2 = \Lambda_r(\omega)^2 > 0$ from the anti-diagonal entry
$$
i \Lambda_1^2 \langle \tilde{\bf V}_t, \mathcal{S} {\bf V}_t \rangle_{L^2} +
\langle \check{\bf V}_t, \mathcal{P} {\bf V}_t \rangle_{L^2}
= i \langle \tilde{\bf V}_t, \mathcal{S} {\bf V}_t \rangle_{L^2} \left( \Lambda_r(\omega)^2 + \Lambda_i(\omega)^2\right) \neq 0
$$
Since the denominator is nonzero for every $\omega \in (0,1)$, we obtain
the unique expression for $\alpha$:
\begin{equation}\label{alpha}
\alpha = \frac{\Lambda_r(\omega)^2 \left( \langle \tilde{\bf V}_t, \mathcal{P} \tilde{\bf V}_g \rangle_{L^2}
+ i \langle \check{\bf V}_t, \mathcal{S} \tilde{\bf V}_g \rangle_{L^2} \right)
- \langle \check{\bf V}_t, \mathcal{P} \check{\bf V}_g \rangle_{L^2}}{i \langle \tilde{\bf V}_t, \mathcal{S} {\bf V}_t \rangle_{L^2}
\left( \Lambda_r(\omega)^2 + \Lambda_i(\omega)^2\right)}.
\end{equation}
Similarly, the coefficient $\beta$ is determined for
$\Lambda_1^2 = -\Lambda_i(\omega)^2 < 0$ from the anti-diagonal entry
$$
i \Lambda_1^2 \langle \tilde{\bf V}_g, \mathcal{S} {\bf V}_g \rangle_{L^2} +
\langle \check{\bf V}_g, \mathcal{P} {\bf V}_g \rangle_{L^2} = -i
\langle \tilde{\bf V}_g, \mathcal{S} {\bf V}_g \rangle_{L^2} \left(\Lambda_i(\omega)^2 +  \Lambda_r(\omega)^2 \right) \neq 0.
$$
Again the denominator is nonzero for every $\omega \in (0,1)$, so that we obtain
the unique expression for $\beta$:
\begin{equation}\label{beta}
\beta = \frac{\Lambda_i(\omega)^2 \left( i \langle \check{\bf V}_g, \mathcal{S} \tilde{\bf V}_t \rangle_{L^2}
- \langle \tilde{\bf V}_g, \mathcal{P} \tilde{\bf V}_t \rangle_{L^2} \right)
- \langle \check{\bf V}_g, \mathcal{P} \check{\bf V}_t \rangle_{L^2}}{-i \langle \tilde{\bf V}_g, \mathcal{S} {\bf V}_g \rangle_{L^2}
\left(\Lambda_i(\omega)^2 +  \Lambda_r(\omega)^2 \right)}.
\end{equation}
These computations justify the $\mathcal{O}(p^3)$ terms in the expansions (\ref{asymptSoler-1})
and (\ref{asymptSoler-2}) for the eigenvalues $\lambda$.

\section{Numerical approximations}

We approximate eigenvalues of the spectral stability problems (\ref{eqMTM}) and (\ref{eqH})
for the massive Thirring and Gross--Neveu models with the Chebyshev interpolation method.
This method has been already applied to the linearized Dirac system in one dimension in \cite{CP}.
The block diagonalized systems in \eqref{eqMTM} and \eqref{eqH} are discretized on
the grid points
$$
x_j=L \tanh^{-1}(z_j), \quad j=0,1,\ldots,N,
$$
where $z_j = \cos\left(\frac{j\pi}{N}\right)$ is the Chebyshev node
and a scaling parameter $L$ is chosen suitably
so that the grid points are concentrated in the region, where the soliton
$U_{\omega}$ changes fast.  Note that $x_0 = \infty$ and $x_N = -\infty$.

According to the standard Chebyshev interpolation method \cite{Trefethen},
the first derivative that appears in the systems \eqref{eqMTM} and \eqref{eqH} is constructed
from the scaled Chebyshev differentiation matrix $\widetilde{D}_N$ of the size $(N+1)\times (N+1)$,
whose each element at $i^{\rm th}$ row and $j^{\rm th}$ column is given by 
$$
[\widetilde{D}_N]_{ij}=\frac{1}{L}\sech^2\left(\frac{x_i}{L} \right)[D_N]_{ij},
$$
where $D_N$ is the standard Chebyshev differentiation matrix (see page $53$ of \cite{Trefethen}) 
and the chain rule $\frac{d u}{dx} = \frac{dz}{dx}\frac{du}{dz}$ has been used. 
Denoting $I_N$ as an identity matrix of the size $(N+1)\times (N+1)$, we replace each term 
in the systems \eqref{eqMTM} and \eqref{eqH} as follows:
$$
\partial_x \rightarrow \widetilde{D}_N, \quad 1\rightarrow I_{N}, \quad
U_{\omega} \rightarrow \mbox{diag}(U_{\omega}(x_0), U_{\omega}(x_1),\cdots, U_{\omega}(x_N)),
$$
Due to the decay of the soliton $U_{\omega}$ to zero at infinity, we have $U_{\omega}(x_0) = U_{\omega}(x_N) = 0$.

The resulting discretized systems from \eqref{eqMTM} and \eqref{eqH} are of the size $4(N+1)\times 4(N+1)$.
Boundary conditions are naturally built into this formulation, because the elements
of the first and last rows of the matrix $[\widetilde{D}_N]_{ij}$ are zero. As a result,
eigenvalues from the first and last rows of the linear discretized system
are nothing but the end points of the continuous spectrum in Proposition \ref{proposition-continuous-spectrum},
whereas the boundary values of the vector ${\bf V}$ at the end points $x_0$ and $x_N$
are identically zero for all other eigenvalues of the linear discretized system.

Throughout all our numerical results, we pick the value of a scaling parameter $L$ to be $L=10$.
This choice ensures that the soliton solutions $U_{\omega}$ for all values of $\omega$ used in
our numerical experiments remain nonzero up to $16$ decimals on all interior grid points
$x_j$ with $1 \leq j \leq N-1$.

Now we begin explaining our numerical results.
Figure \ref{MTMFig} shows eigenvalues of the spectral stability problem
(\ref{eqMTM}) for the solitary wave of the massive Thirring model.
We set $\omega=0$ and display eigenvalues $\lambda$ in the complex plane for different values of $p$.
The subfigure at $p=0.2$ demonstrates our analytical result in Lemma \ref{lemmaMTM},
which predicts splitting of the zero eigenvalue of algebraic multiplicity four
into two pairs of real and imaginary eigenvalues. Increasing the value of $p$ further,
we observe emergence of imaginary eigenvalues from the edges of the continuous spectrum
branches, as seen at $p=0.32$. A pair of imaginary eigenvalues coalesces and bifurcates
into the complex plane with nonzero real parts, as seen at $p=0.36$, and later
absorbs back into the continuous spectrum branches, seen in the next subfigures.
We can also see emergence of a pair of imaginary eigenvalues from the edges of the continuous spectrum
branches at $p=0.915$. The pair bifurcates along the real axis after coalescence at the origin,
as seen at $p=1$. The gap of the continuous spectrum closes up at $p = 1$.
For a larger value of $p$, two pairs of real eigenvalues are seen to approach each other.

\begin{figure}[htbp]
        \centering
        \begin{subfigure}[htbp]{0.35\textwidth}
               \includegraphics[scale=0.55]{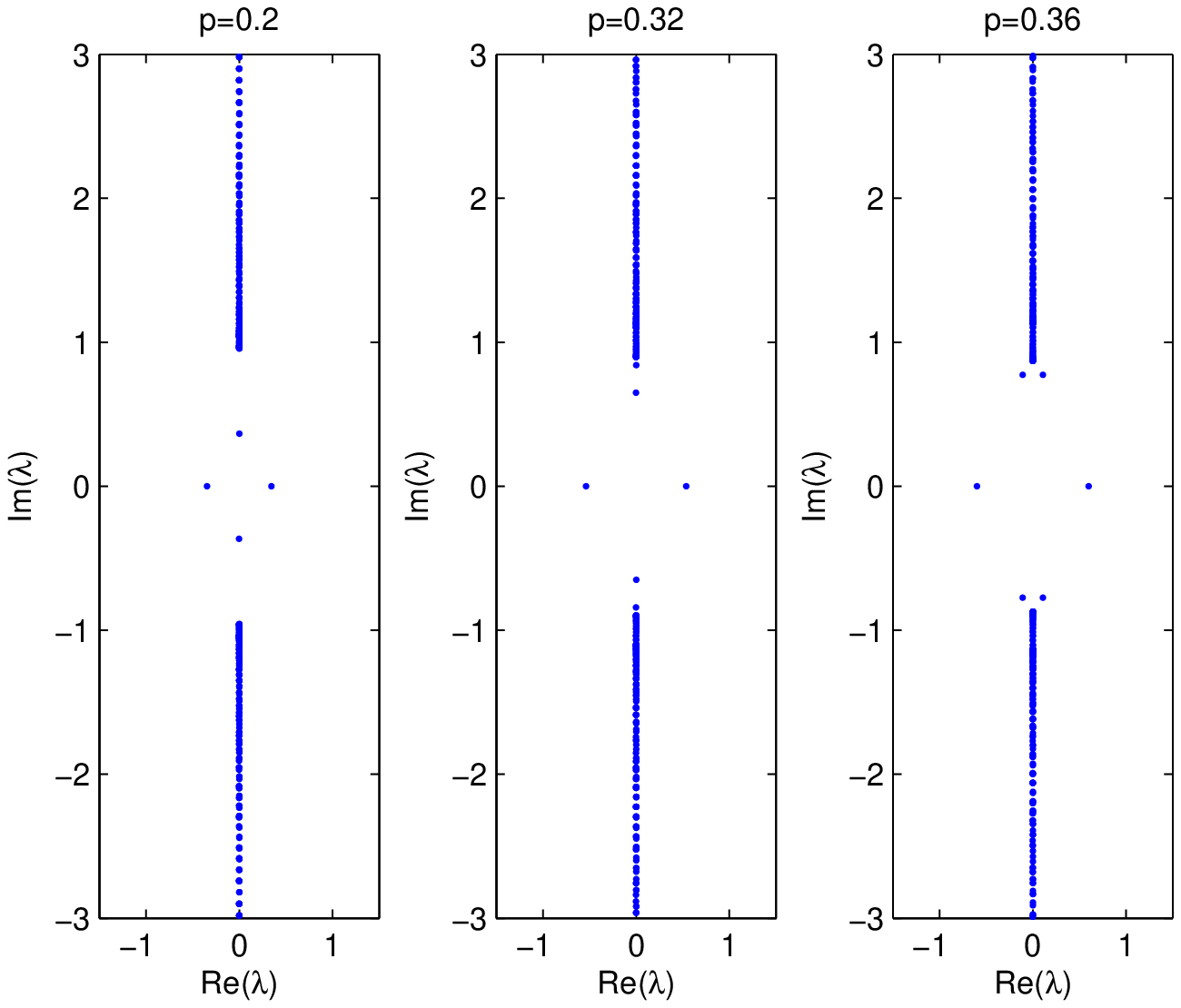}
        \end{subfigure}

        \begin{subfigure}[htbp]{0.35\textwidth}
              \includegraphics[scale=0.55]{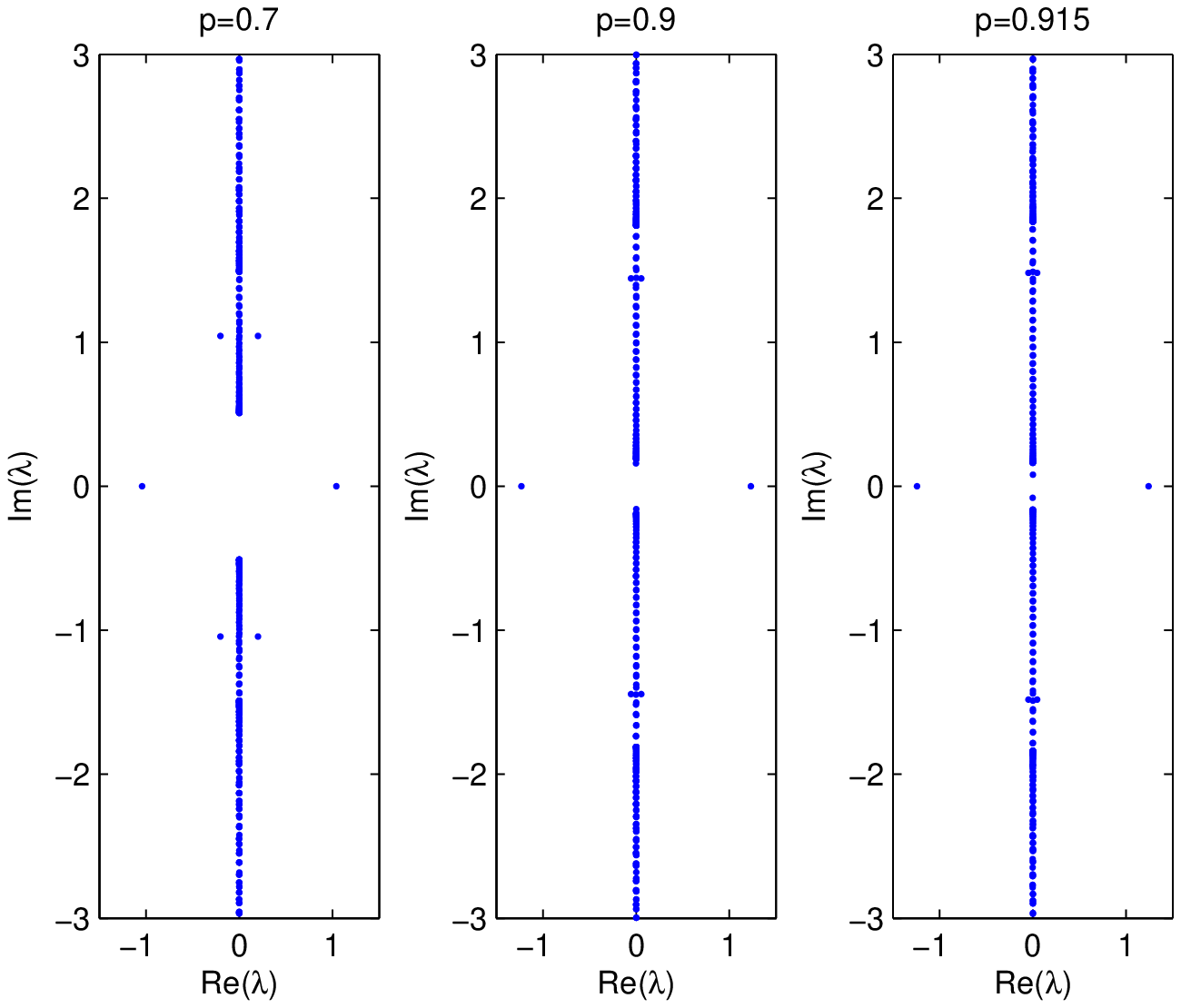}
        \end{subfigure}

         \begin{subfigure}[htbp]{0.35\textwidth}
              \includegraphics[scale=0.55]{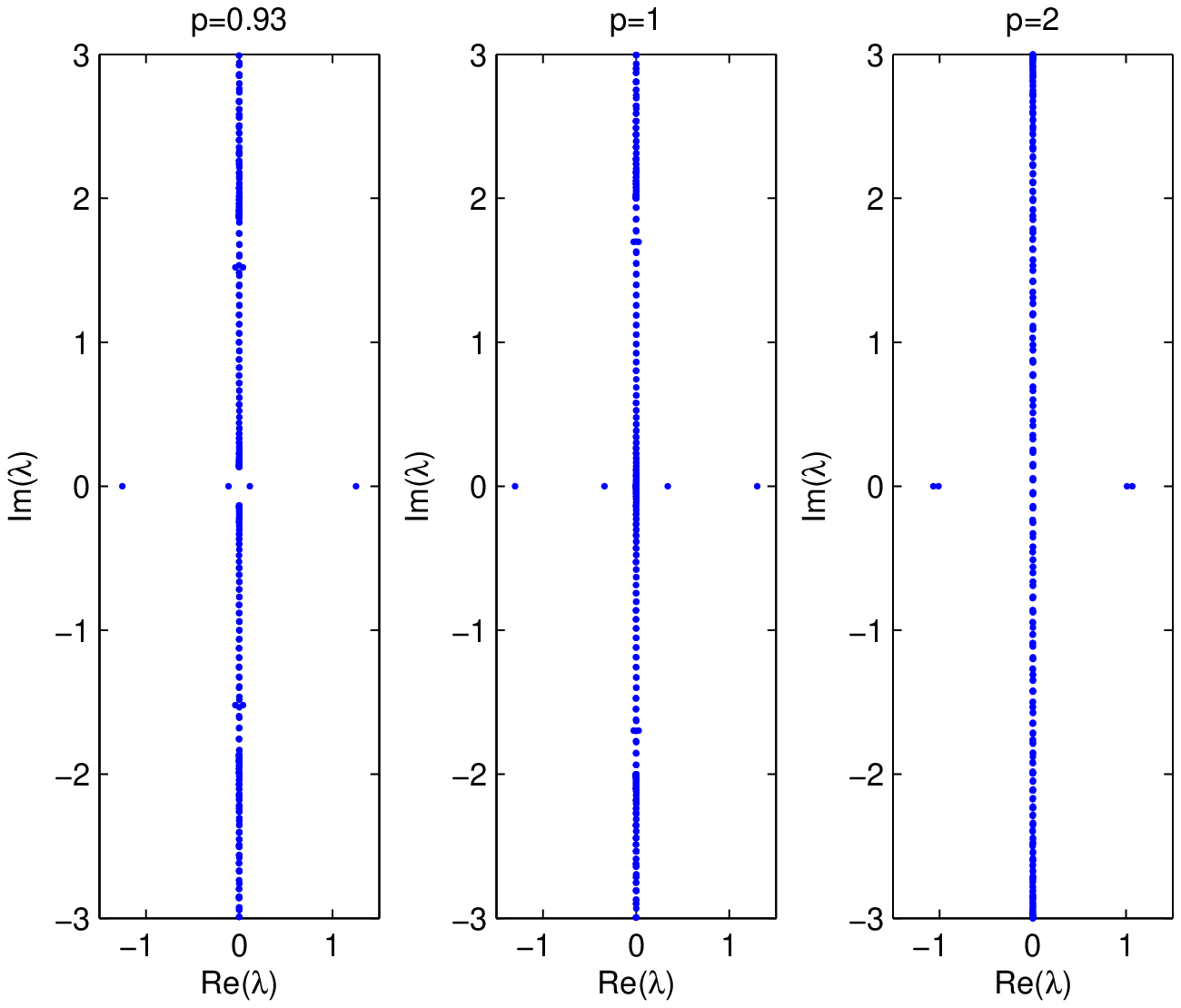}
        \end{subfigure}
             \caption{Numerical approximations for the spectral problem \eqref{eqMTM} associated with the solitary wave
              of the massive Thirring model at $\omega = 0$.}
             \label{MTMFig}
\end{figure}

\begin{figure}[htbp]

       \begin{subfigure}[htbp]{0.5 \linewidth}
              \includegraphics[scale=0.5]{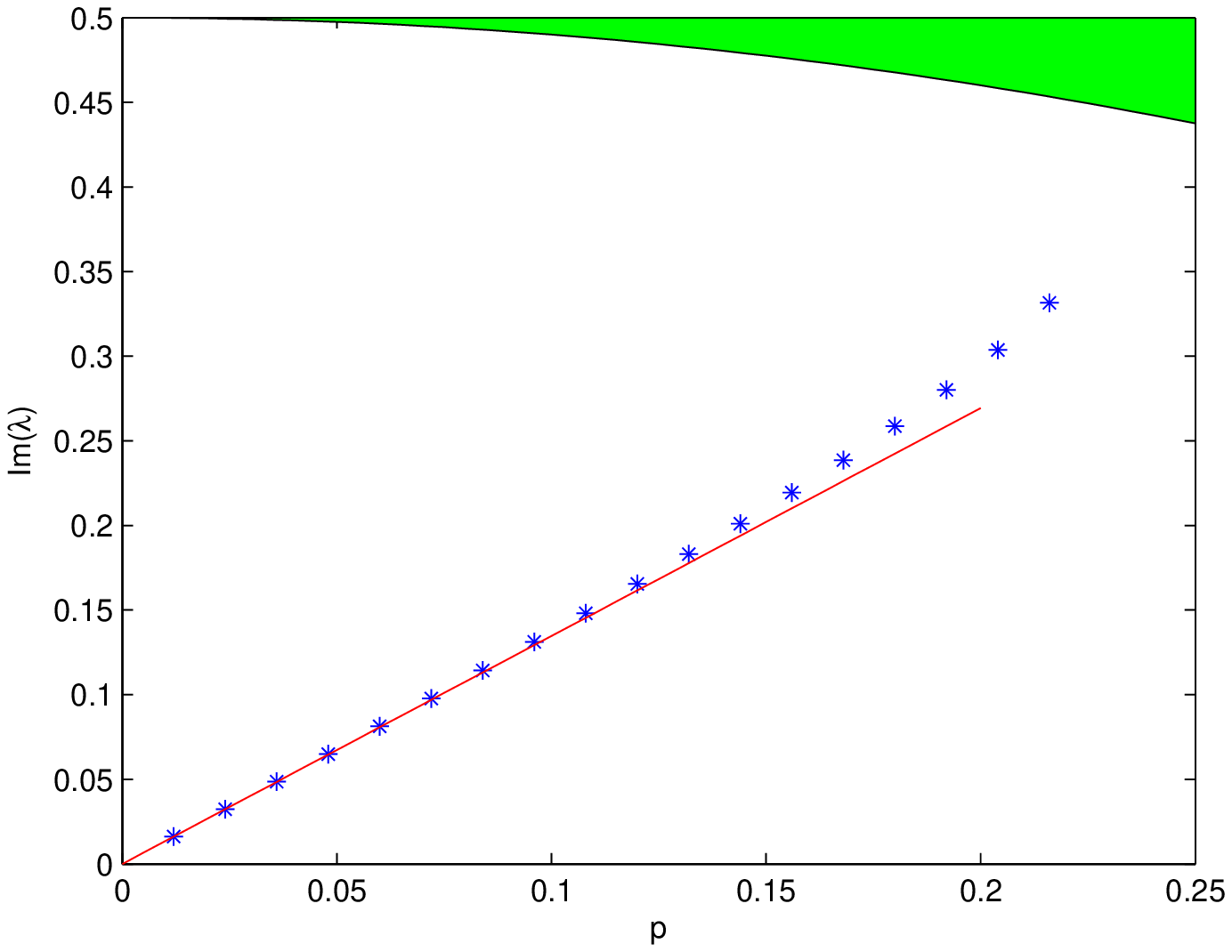}
                \caption{$\omega=0.5$}
                \label{MTM_Imag_05}
        \end{subfigure}
	\begin{subfigure}[htbp]{0.5 \linewidth}
              \includegraphics[scale=0.5]{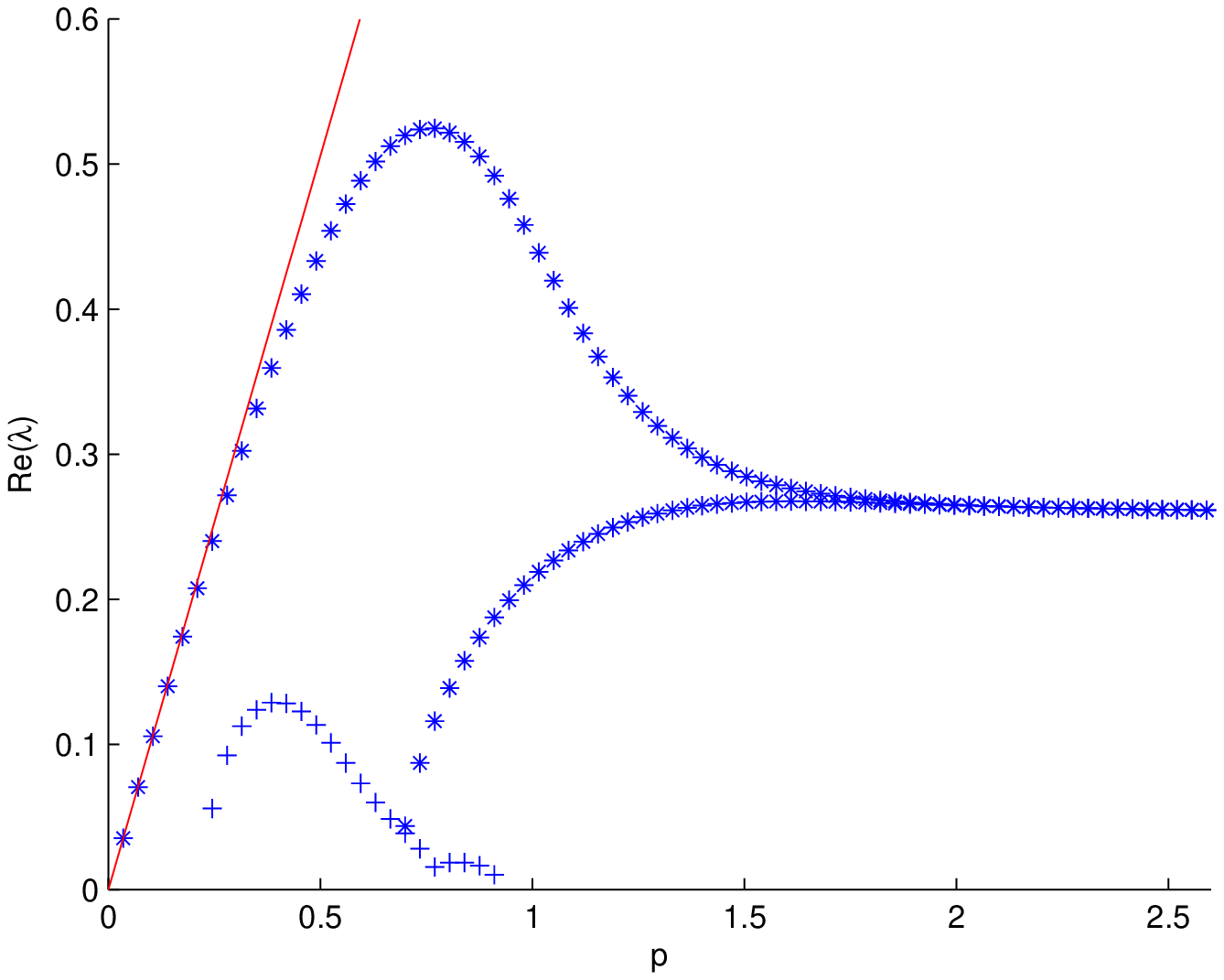}
                \caption{$\omega=0.5$}
                \label{MTM_Real_05}
        \end{subfigure}

        \begin{subfigure}[htbp]{0.5\linewidth}
               \includegraphics[scale=0.5]{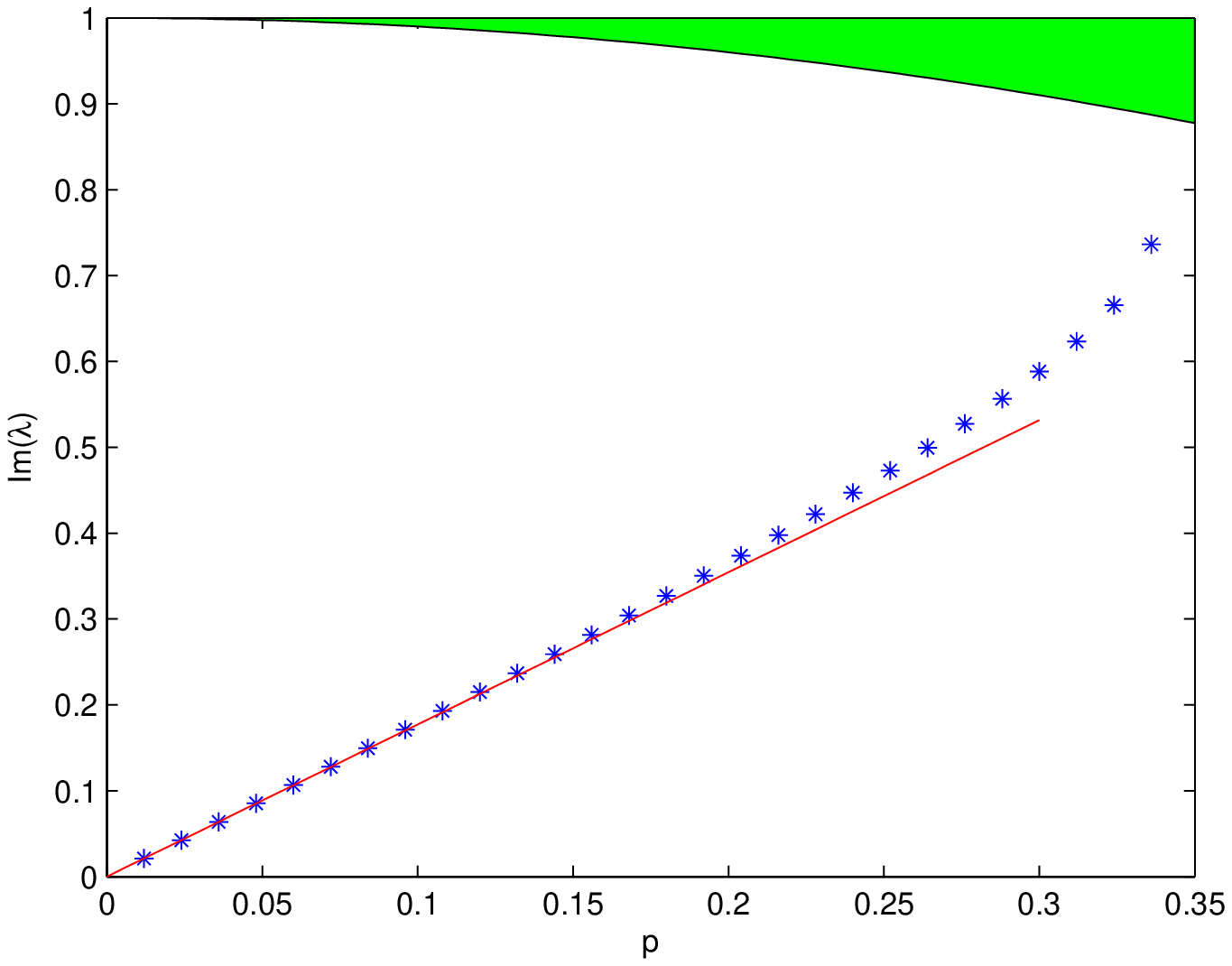}
                \caption{$\omega = 0$}
                \label{MTM_Imag_0}
        \end{subfigure}
        \begin{subfigure}[htbp]{0.5\linewidth}
              \includegraphics[scale=0.5]{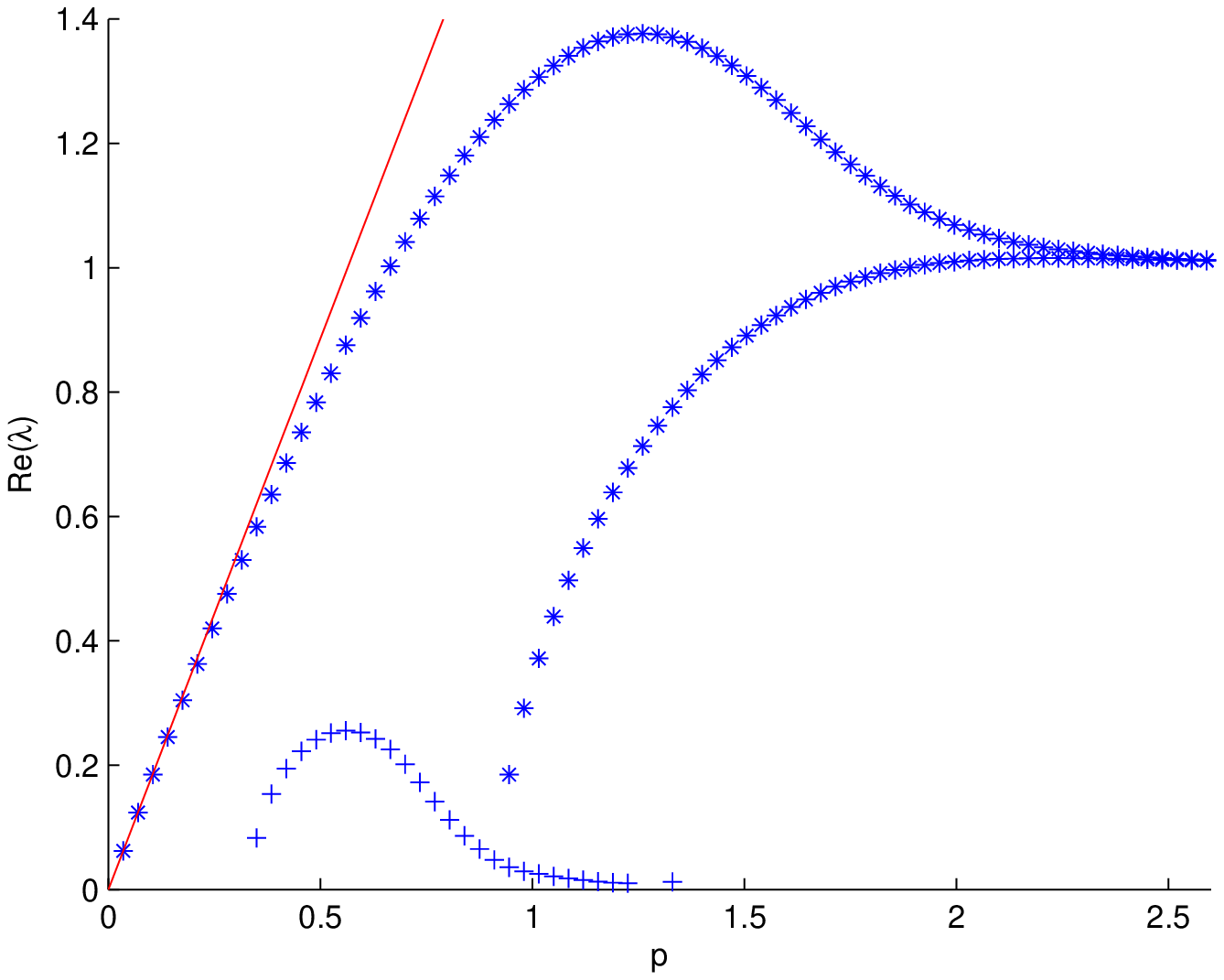}
                \caption{$\omega = 0$}
                \label{MTM_Real_0}
        \end{subfigure}

         \begin{subfigure}[htbp]{0.5\linewidth}
              \includegraphics[scale=0.5]{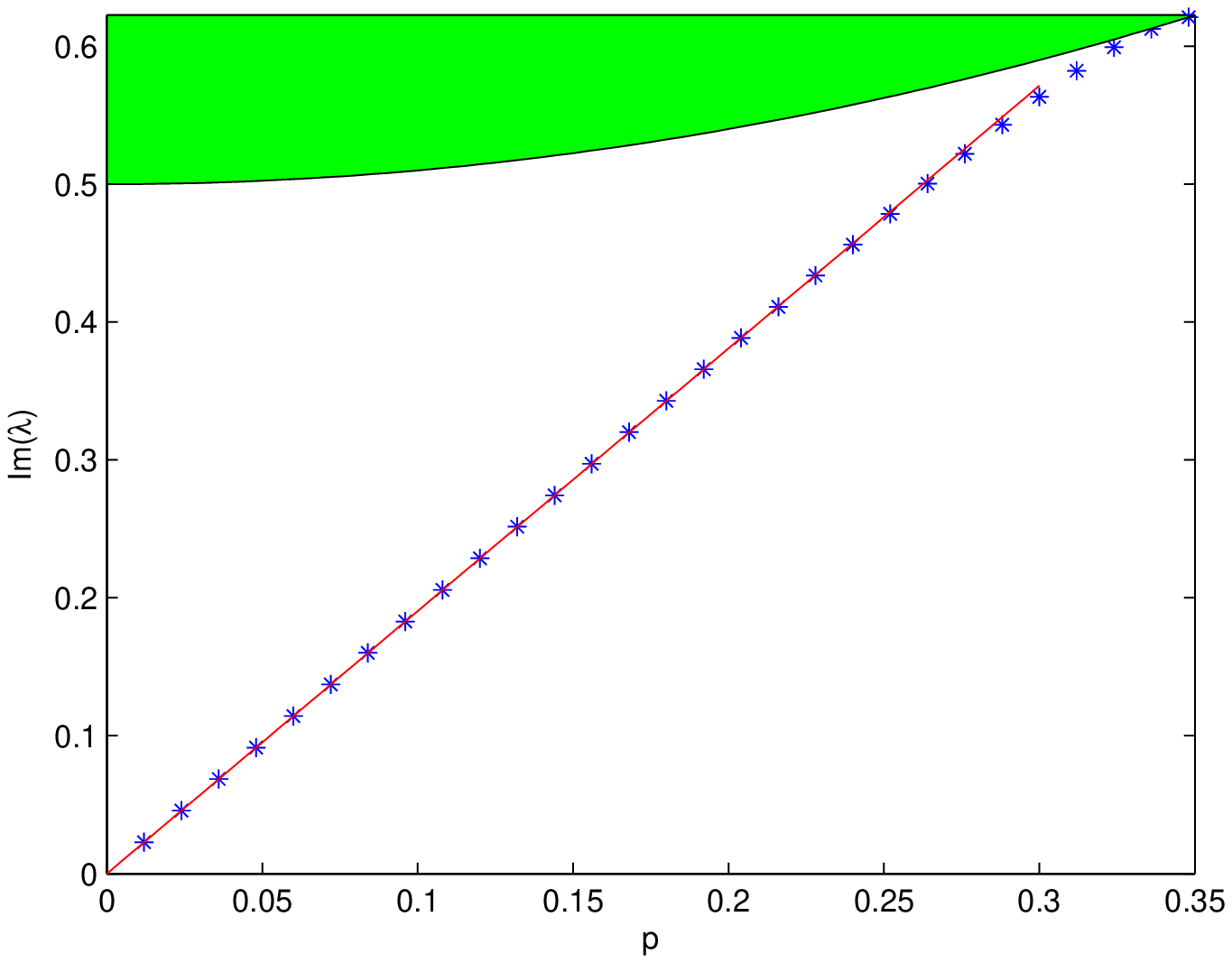}
                \caption{$\omega=-0.5$}
                \label{MTM_Imag_minus05}
        \end{subfigure}
	\begin{subfigure}[htbp]{0.5\linewidth}
              \includegraphics[scale=0.5]{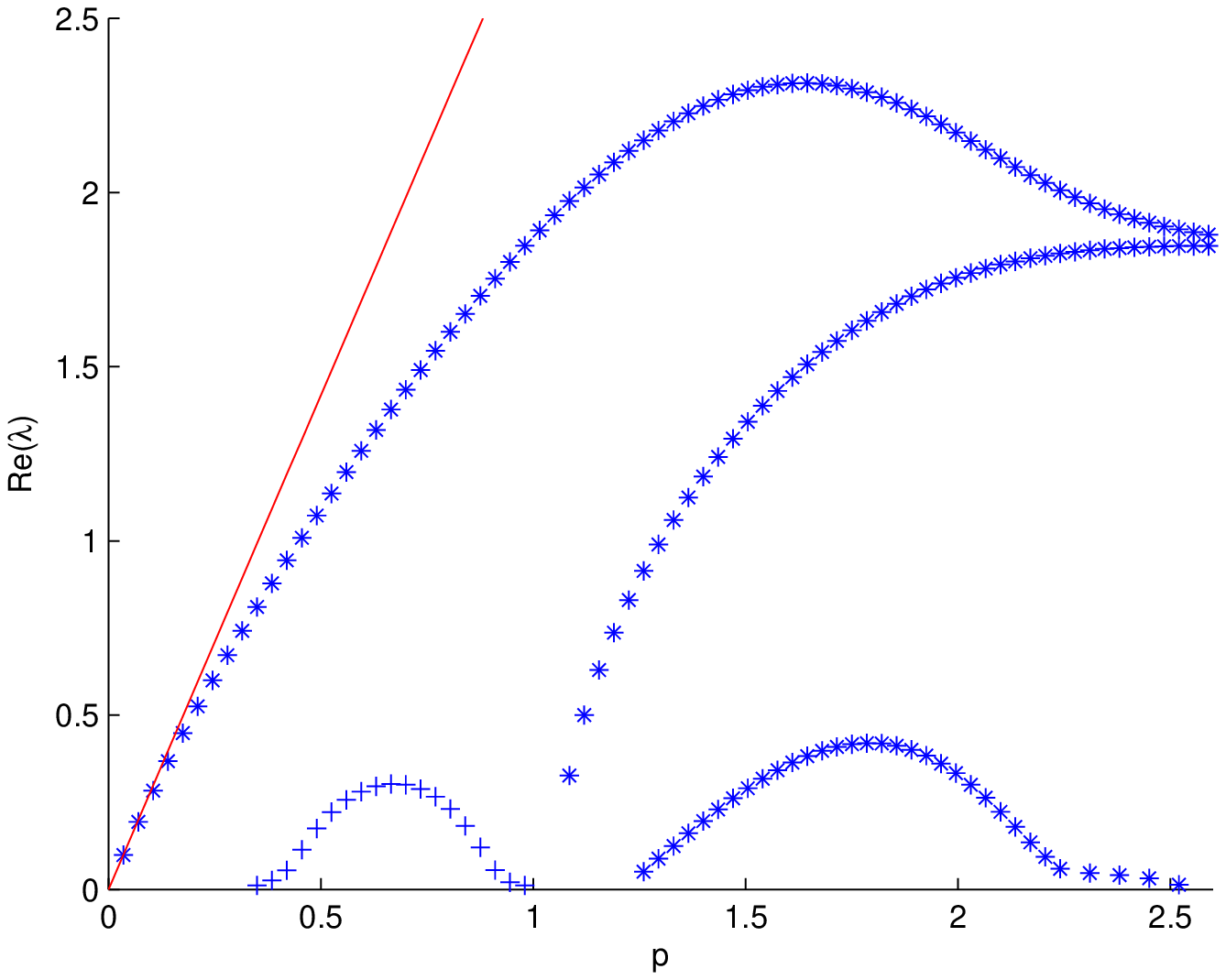}
                \caption{$\omega = -0.5$}
                \label{MTM_Real_minus05}
        \end{subfigure}
             \caption{Numerical approximations of isolated eigenvalues of the spectral problem (\ref{eqMTM}) versus parameter $p$.}
             \label{MTM_Real_Imag}
\end{figure}

\begin{figure}[htbp]
            \begin{subfigure}[htbp]{0.5\linewidth}
               \centering
               \includegraphics[scale=0.5]{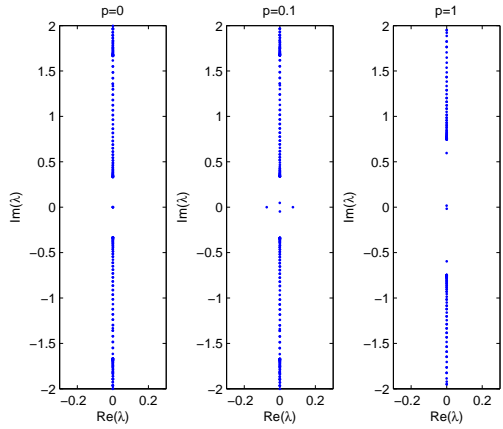}
                \caption{$\omega = 2/3$}
                \label{SolerFig1}
        \end{subfigure}
            \begin{subfigure}[htbp]{0.5\linewidth}
               \includegraphics[scale=0.5]{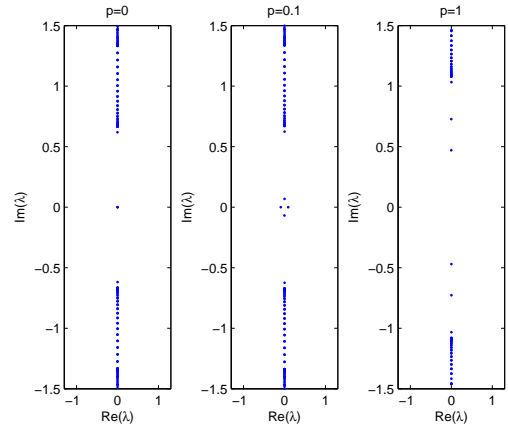}
                \caption{$\omega = 1/3$}
                \label{GrossSubplots}
        \end{subfigure}
        \caption{Numerical approximations for the spectral problem \eqref{eqH}
        associated with the solitary wave of the massive Gross-Neveu model.}
             \label{SolerFig}
\end{figure}

\begin{figure}[htbp]

         \begin{subfigure}[htbp]{0.5\linewidth}
              \includegraphics[scale=0.45]{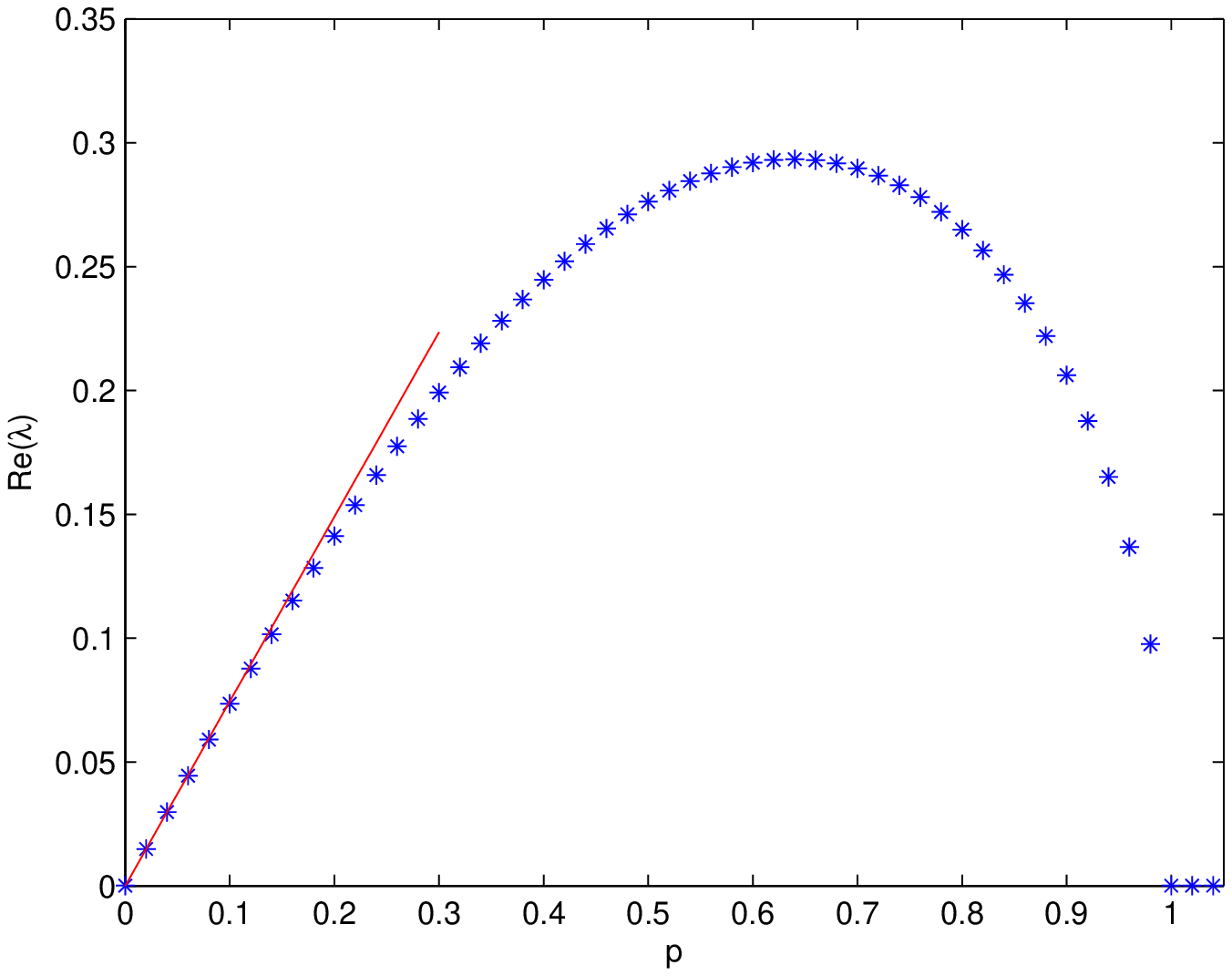}
                \caption{$\omega = 2/3$}
                \label{SolerFig_real}
        \end{subfigure}
            \begin{subfigure}[htbp]{0.5\linewidth}

              \includegraphics[scale=0.45]{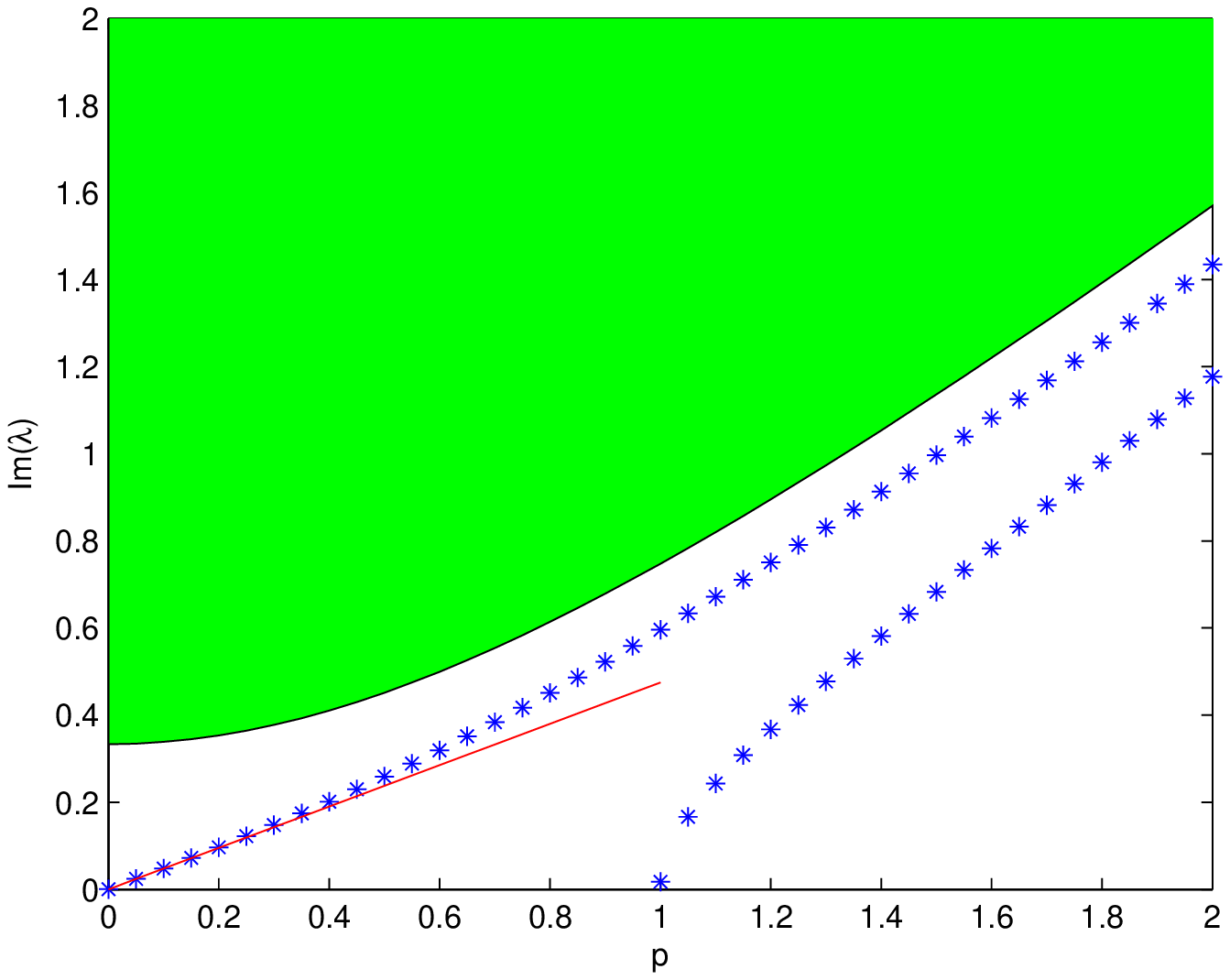}
                \caption{$\omega = 2/3$}
                \label{SolerFig_imag}
        \end{subfigure}

         \begin{subfigure}[htbp]{0.5\linewidth}

              \includegraphics[scale=0.45]{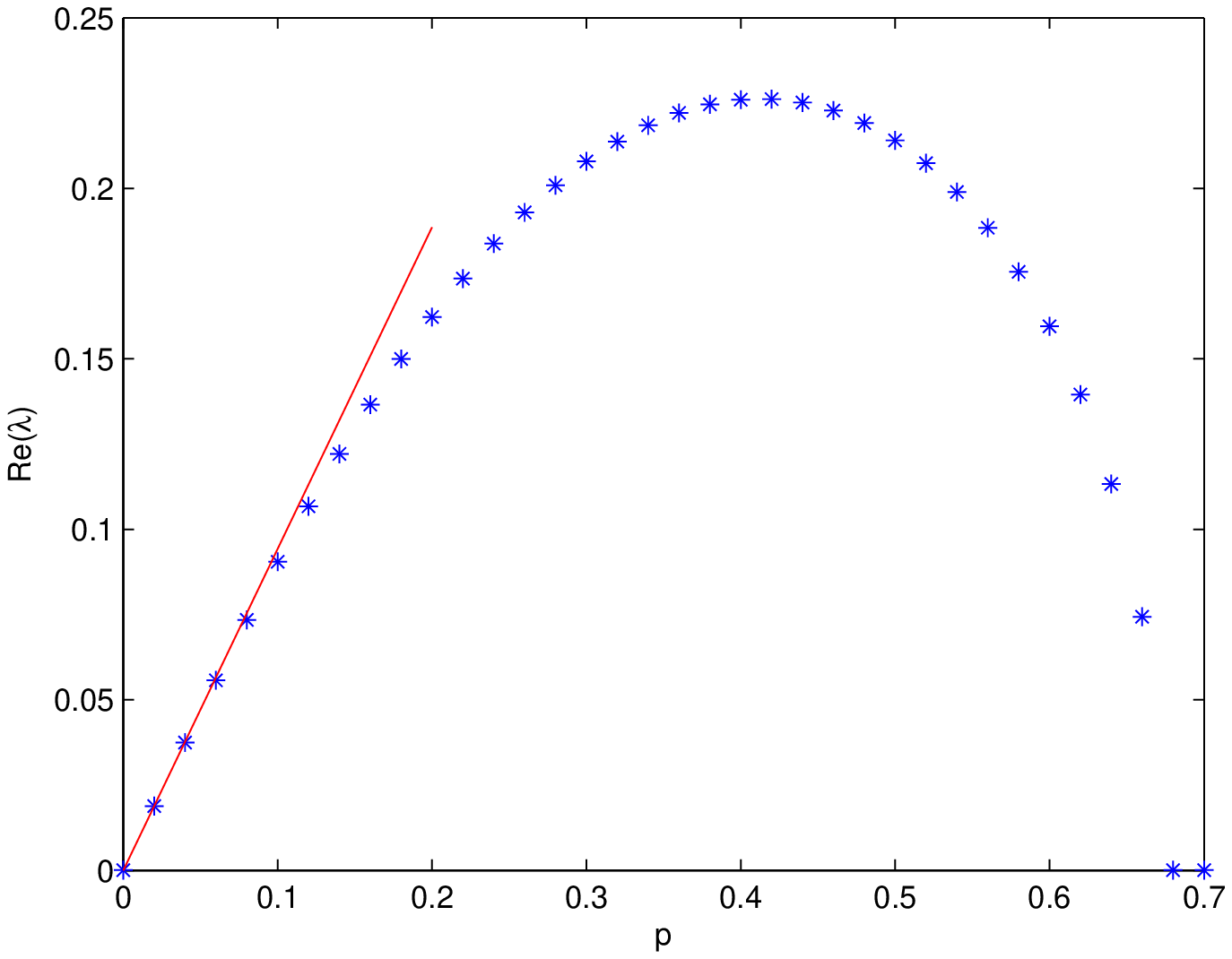}
                \caption{$\omega = 1/3$}
                \label{Gross_real}
        \end{subfigure}
            \begin{subfigure}[htbp]{0.5\linewidth}
              \includegraphics[scale=0.45]{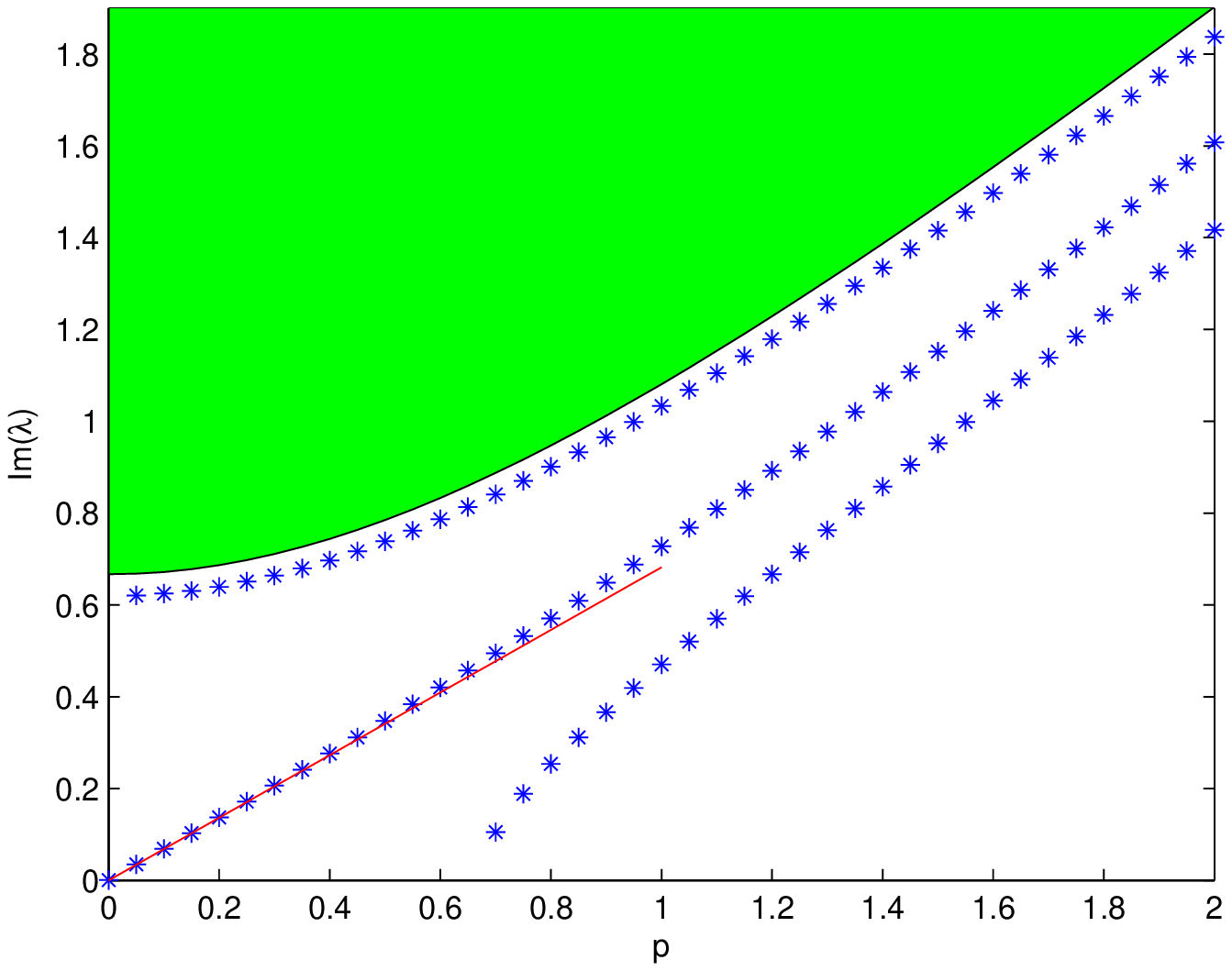}
                \caption{$\omega = 1/3$}
                \label{Gross_imag}
        \end{subfigure}
             \caption{Numerical approximations of isolated eigenvalues of the spectral problem (\ref{eqH}) versus parameter $p$.}
             \label{GrossFig}
\end{figure}

Figure \ref{MTM_Real_Imag} show how the positive imaginary and real eigenvalues
bifurcating from the zero eigenvalue depends on $p$ for $\omega = 0.5, 0, -0.5$,
respectively at each row.
Red solid lines show asymptotic approximations established in
Lemma \ref{lemmaMTM} for $\lambda = \Lambda_r(\omega)p$ and $\lambda = i \Lambda_i(\omega)p$.
Green filled regions in Figures (\ref{MTM_Imag_05}), (\ref{MTM_Imag_0}), and (\ref{MTM_Imag_minus05}) denote the
location of the continuous spectrum. Symbols $*$ and $+$ in Figures (\ref{MTM_Real_05}), 
(\ref{MTM_Real_0}), and (\ref{MTM_Real_minus05}) denote purely real eigenvalues and eigenvalues with nonzero imaginary part.

Numerical results suggest the persistence of transverse instability for any period $p$
because of purely real eigenvalues, which come close to each other and persist
for a large $p$. We observe a stronger instability for a larger solitary wave with
$\omega=-0.5$ than for a smaller solitary wave with $\omega = 0.5$. We notice that
an imaginary eigenvalue does not reach the edge of the continuous spectrum for $\omega =0.5$ and $\omega = 0$
due to colliding with other imaginary eigenvalue coming from the edge of the continuous spectrum.
On the other hand, an imaginary eigenvalue for $\omega=-0.5$ gets absorbed in the edge
of the continuous spectrum. This is explained by the movement of the two branches of the continuous spectrum
in the opposite directions: up and down as the value of $p$ varies. Moving-down branch on $\mbox{Im}(\lambda) > 0$,
as seen in $\omega=0.5$ and $\omega = 0$, expels an eigenvalue from its edge that makes collision with the other
imaginary eigenvalue, while moving-up branch on $\mbox{Im}(\lambda) > 0$, as seen in $\omega =-0.5$,
absorbs an imaginary eigenvalue approaching the edge.

\begin{table}[htdp]
\begin{center}
\begin{tabular}{|c|c|c|c|}
\hline
 & $\omega = -0.5$ & $\omega = 0$ & $\omega=0.5$\\
\hline
$N=100$ & $1.96\times 10^{-1}$ & $2.57\times 10^{-1} $ & $1.16\times 10^{-1}$\\
$N=300$ & $1.36 \times 10^{-4}$ & $2.18\times 10^{-4} $ & $7.02\times 10^{-5} $\\
$N=500$ & $2.22\times 10^{-7}$ & $8.77\times 10^{-5} $ & $6.56 \times 10^{-8}$\\
\hline
\end{tabular}
\end{center}
\caption{$M_{|\Imag\lambda|<10}=\max|\Real\lambda|$ versus values of $\omega$ and $N$
in the case of the spectral problem \eqref{eqMTM} with $p=0$.}
\label{TableMTM}
\end{table}%

To verify a reasonable accuracy of the numerical method, we measure the maximum real part of eigenvalues
along the imaginary axis with $|\mbox{Im}(\lambda)|<10$ and denote it by $M_{|\Imag\lambda|<10}$.
This quantity shows the level of spurious parts of the eigenvalues and it is known to be
large in the finite-difference methods applied to the linearized Dirac systems (see discussion
in \cite{CP}). The following table \ref{TableMTM} shows values of $M_{|\Imag\lambda|<10}$
for three values of $\omega$ and three values of the number $N$ of the Chebyshev points.
In all numerical computations reported on Figures \ref{MTMFig} and \ref{MTM_Real_Imag},
we choose $N=300$, in this way spurious eigenvalues are hardly visible on the figures.

Figures \ref{SolerFig} and \ref{GrossFig} show eigenvalues of the spectral stability problem
(\ref{eqH}) for the solitary wave of the massive Gross--Neveu equation with parameter values
$\omega = 2/3$ and $\omega = 1/3$, respectively. We confirm spectral stability
of the solitary wave for $p = 0$. In agreement with numerical results in \cite{Comech},
we also observe that the spectrum of a linearized operator for $p = 0$ has an additional
pair of imaginary eigenvalues in the case $\omega = 1/3$. (Recall that this issue is contradictory
in the literature with some results reporting spectral instability of solitary waves for $\omega = 1/3$ \cite{Saxena1,Saxena2}.)

The subfigures of Figure \ref{SolerFig} at $p=0.1$ demonstrate our analytical result in Lemma \ref{lemmaSoler},
which predicts splitting of the zero eigenvalue of algebraic multiplicity four
into two pairs of eigenvalues along the real and imaginary axes. Note that the
pair along the real axis persists as the pair of real eigenvalues up to the numerical
accuracy. (Recall that the statement of Lemma \ref{lemmaSoler} lacks the result on
the persistence of real eigenvalues.) Increasing the values of $p$ further, we observe
that the real eigenvalues move back to the origin and split along the imaginary axis, as seen
on the subfigures at $p = 1$.  The gap of the continuous spectrum branches around
the origin is preserved for all values of parameter $p$. The pairs of imaginary eigenvalues
persist in the gap of continuous spectrum for larger values of the parameter $p$.

Figure \ref{GrossFig} shows real and imaginary eigenvalues versus $p$ for the same cases $\omega = 2/3$
and $\omega = 1/3$. The green shaded region indicates the location of the continuous spectrum.
Red solid lines show asymptotic approximations established in
Lemma \ref{lemmaSoler} for $\lambda = \Lambda_r(\omega)p$ and $\lambda = i \Lambda_i(\omega)p$.
It follows from our numerical results that the transverse instability has a threshold on the
$p$ values so that the solitary waves are spectrally stable for sufficiently large values of $p$.
These thresholds on the transverse instability were observed for other values of $\omega$ in $(0,1)$.

To control the accuracy of the numerical method, we again compute the values of $M_{|\Imag\lambda|<10}$
for spurious parts of eigenvalues along the imaginary axis.
Table \ref{TableGross} shows the result for two values of $\omega$ and three values of $N$.
Compared to the case of the massive Thirring model in Table \ref{TableMTM},
we observe a slower convergence rate and lower accuracy of our numerical approximations.

We found that spurious eigenvalues are more visible for smaller values of $\omega$,
in particular, for the value $\omega =1/3$, evidenced in Figure \ref{GrossFigError}.
While spurious eigenvalues in the case of $\omega=1/3$ in Figure \ref{GrossFigError} are quite visible,
the maximum real part of eigenvalues with $|\Imag \lambda|<2$ is at the order of $10^{-3}$
for $N=400$. As a result, the value $N=400$ was chosen for numerical
approximations reported on Figures \ref{SolerFig} and \ref{GrossFig},
this choice guarantees that spurious eigenvalues are hardly visible on the figures.

\begin{table}[htdp]
\begin{center}
\begin{tabular}{|c|c|c|}
\hline
 & $\omega = 1/3$ & $\omega = 2/3$ \\
\hline
$N=100$ & $6.48\times 10^{-2}$ & $2.03\times 10^{-3} $\\
$N=300$ & $1.72\times 10^{-2}$ & $1.68\times 10^{-3} $ \\
$N=500$ & $1.38\times 10^{-2}$ & $1.20\times 10^{-3} $ \\
\hline
\end{tabular}
\end{center}
\caption{$M_{|\Imag\lambda|<10}=\max|\Real\lambda|$ versus values of $\omega$ and $N$
in the case of the spectral problem \eqref{eqH} with $p=0$.}
\label{TableGross}
\end{table}%

\begin{figure}[htbp]
            \begin{subfigure}[htbp]{0.5 \linewidth}
               \includegraphics[scale=0.52]{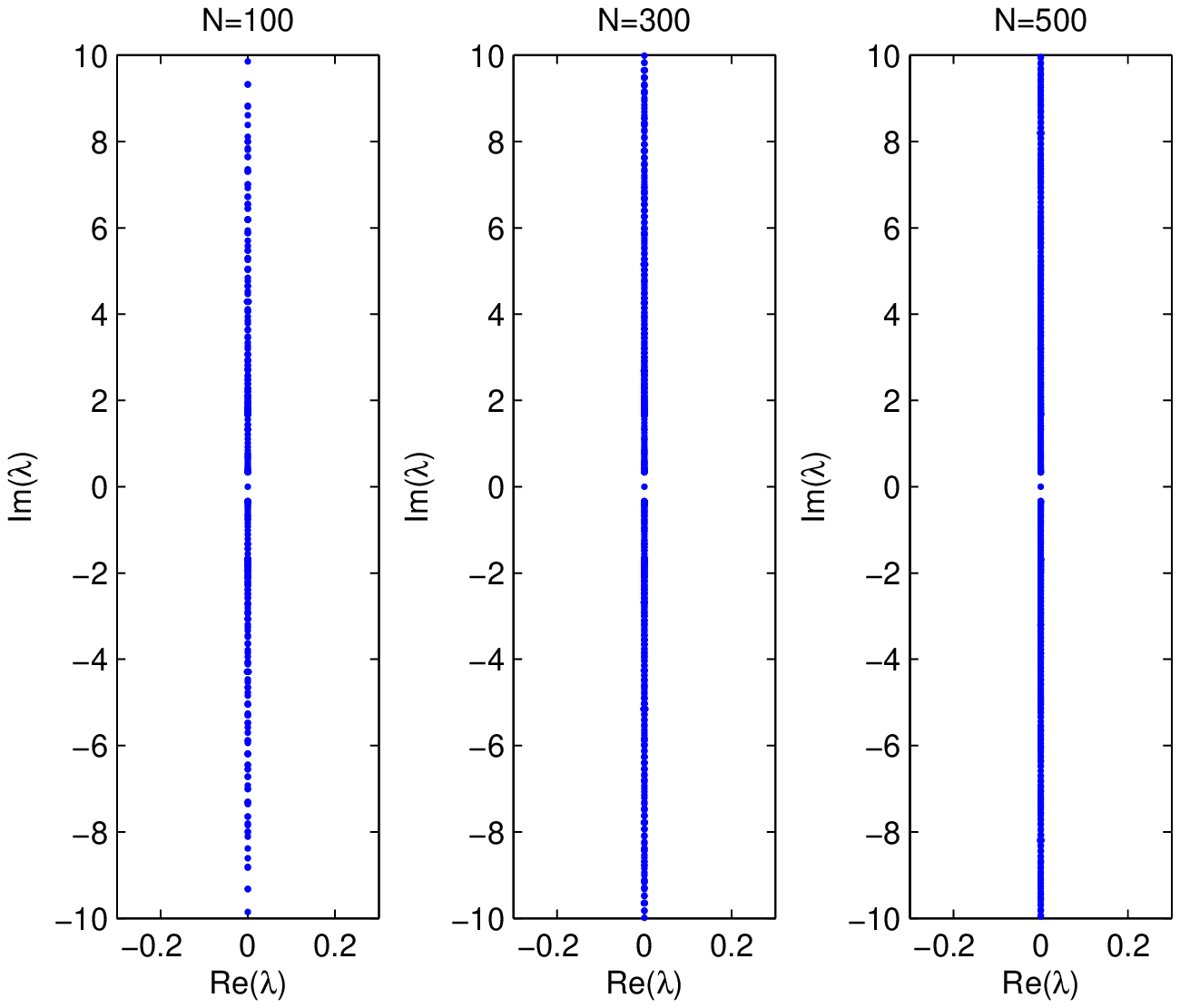}
                \caption{$\omega = 2/3$}
                                \label{GrossError2}
        \end{subfigure}
 %\quad\quad\quad\quad \quad \quad \quad
                \begin{subfigure}[htbp]{0.5 \linewidth}
               \includegraphics[scale=0.52]{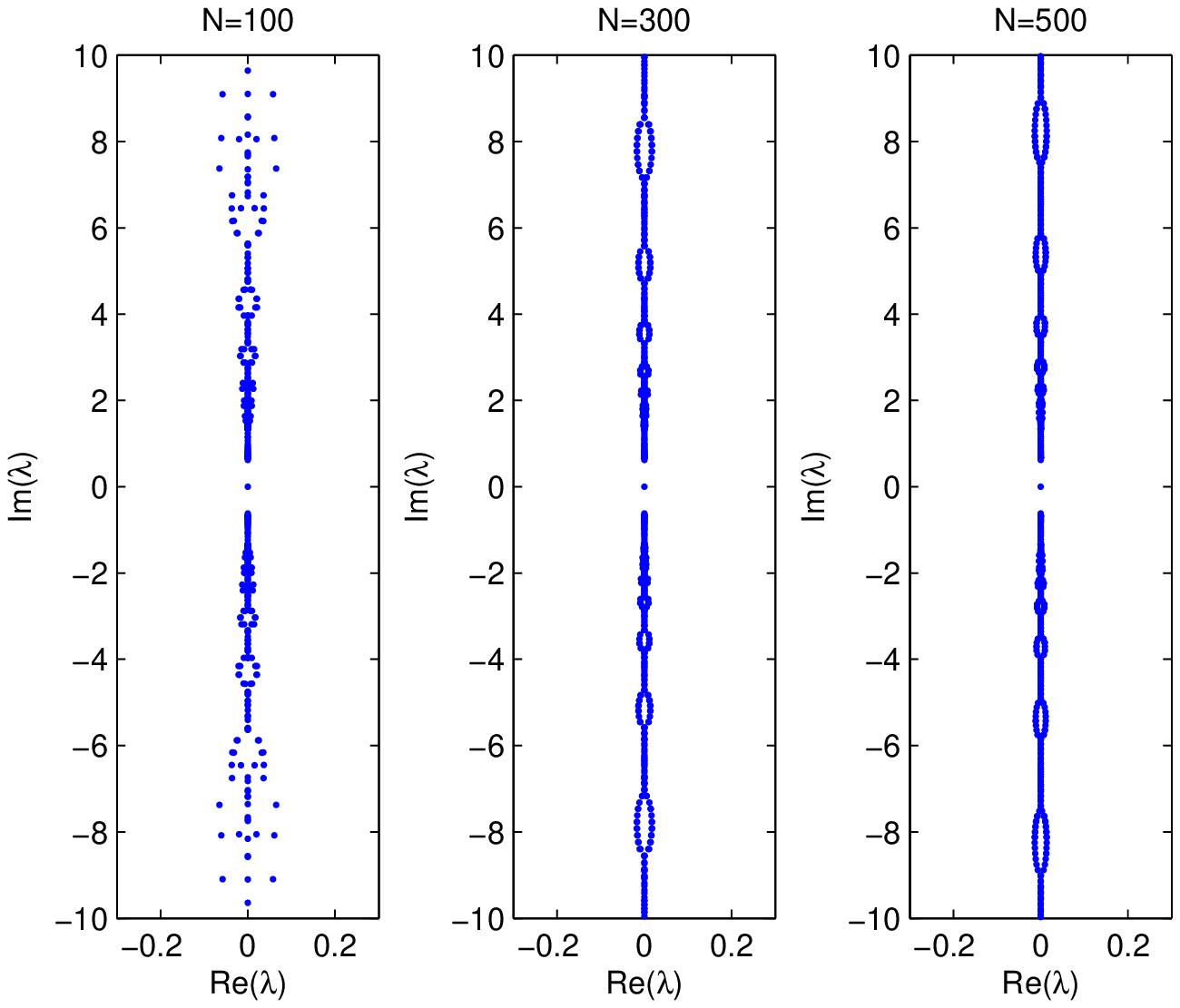}
                \caption{$\omega = 1/3$}
                \label{GrossError1}
        \end{subfigure}
        \caption{Numerically computed $\lambda$ for the spectral problem \eqref{eqH} with $p=0$ 
        for different values of the number $N$ of Chebyshev points.}
             \label{GrossFigError}
\end{figure}

\section{Discussion}

In this last section, we discuss our main result, Theorem \ref{specThm}, in
connection with the more general massive Dirac equations given by the systems
(\ref{dirac1}) and (\ref{dirac3}). One way to consider the more general case
without going too much into technical details is to study reductions
of the massive Dirac equations to the nonlinear Schr\"{o}dinger (NLS) equation.
Both families of solitary waves (\ref{MTM-soliton}) and (\ref{Soler-soliton})
have reductions to the NLS solitary wave in the limit of $\omega \to 1$.
Here we explore a more general reduction to the NLS equation, which is also valid for perturbations
of the solitary wave. Justification of these reductions to the NLS equation
(in a more complex setting of infinitely many coupled NLS equations) can be
found in the recent work \cite{PSW}.

Starting with the massive Dirac equations (\ref{dirac1}) for the periodic stripe potentials,
we can use the scaling transformation
\begin{equation}
\label{ansatz-NLS}
\left\{ \begin{array}{l}
u(x,y,t) = \epsilon e^{it} U(\epsilon x, \epsilon y, \epsilon^2 t), \\
v(x,y,t) = \epsilon e^{it} V(\epsilon x, \epsilon y, \epsilon^2 t),
\end{array} \right.
\end{equation}
where $\epsilon$ is a formal small parameter, and rewrite the system
in the equivalent form
\begin{equation}
\label{dirac1-end}
\left\{ \begin{array}{ll} V - U + i \epsilon U_X + \epsilon^2 (i U_T + U_{YY}) = \epsilon^2 (\alpha_1 |U|^2 + \alpha_2 |V|^2) U, \\
U - V - i \epsilon V_X + \epsilon^2 (i V_T + V_{YY}) = \epsilon^2 (\alpha_2 |U|^2 + \alpha_1 |V|^2) V, \end{array} \right.
\end{equation}
where $X = \epsilon x$, $Y = \epsilon y$, and $T = \epsilon^2 t$ are rescaled variables for slowly varying 
spatial and temporal coordinates. Proceeding now with formal expansions,
$$
\left\{ \begin{array}{l}
U = W + \frac{i}{2} \epsilon W_X + \epsilon^2 \tilde{U}, \\
V = W - \frac{i}{2} \epsilon W_X + \epsilon^2 \tilde{V}, \end{array} \right.
$$
where $W$ is the leading-order part and $(\tilde{U},\tilde{V})$ are correction terms, we obtain
the NLS equation on $W$ at the leading order from the condition that the correction terms  $(\tilde{U},\tilde{V})$
are bounded:
\begin{equation}
\label{NLS-1}
i W_T - \frac{1}{2} W_{XX} + W_{YY} = (\alpha_1 + \alpha_2) |W|^2 W.
\end{equation}
The NLS equation (\ref{NLS-1}) is referred to as the hyperbolic NLS equation because of the
linear diffractive terms. It admits the family of $Y$-independent line solitons if $\alpha_1 + \alpha_2 > 0$,
which includes both the case of the periodic (stripe) potentials with $\alpha_2 = 2 \alpha_1 > 0$ and
the case of the massive Thirring model with $\alpha_1 = 0$ and $\alpha_2 > 0$.

From the previous literature,
see, e.g., recent works \cite{PEOB,PY} or pioneer work \cite{ZR}, it is known that the line solitons
are unstable in the hyperbolic NLS equation with respect to the spatial translation, in agreement
with the result of Lemma \ref{lemmaMTM}. Moreover, the instability region extends to all 
values of the transverse wave number $p$, in agreement with our numerical results 
on Figures \ref{MTMFig} and \ref{MTM_Real_Imag}.  Thus, we anticipate that our results 
are applicable to the more general family of the
massive Dirac equations (\ref{dirac1}) with $\alpha_1 + \alpha_2 > 0$.

Turning now to the massive Dirac equations (\ref{dirac3}) for the hexagonal potentials,
we can use the same scaling transformation (\ref{ansatz-NLS}) and obtain
\begin{equation}
\label{dirac3-end}
\left\{ \begin{array}{ll} V-U + \epsilon (i U_X + V_Y) + i \epsilon^2 U_T =
\epsilon^2 \left(\beta_1(U|U|^2+\overline{U}V^2+2U|V|^2) + \beta_2\overline{U}(U^2-V^2)\right),\\
U - V - \epsilon (i V_X + U_Y) + i \epsilon^2 V_T = \epsilon^2 \left(
\beta_1(V|V|^2+\overline{V}U^2+2V|U|^2) + \beta_2\overline{V}(V^2-U^2)\right). \end{array} \right.
\end{equation}
Proceeding now with formal expansions,
$$
\left\{ \begin{array}{l}
U = W + \frac{\epsilon}{2} (i W_X + W_Y) + \epsilon^2 \tilde{U}, \\
V = W - \frac{\epsilon}{2} (i W_X + W_Y) + \epsilon^2 \tilde{V}, \end{array} \right.
$$
we obtain the following NLS equation for $W$ from the condition that the correction terms
$(\tilde{U},\tilde{V})$ are bounded:
\begin{equation}
\label{NLS-3}
i W_T - \frac{1}{2} W_{XX} -\frac{1}{2} W_{YY} = 4 \beta_1 |W|^2 W.
\end{equation}
The NLS equation (\ref{NLS-3}) is referred to as the elliptic NLS equation because of the
linear diffractive terms. It admits the family of $Y$-independent line solitons if $\beta_1 > 0$,
which includes both the case of the hexagonal potentials with $\beta_1,\beta_2 > 0$ and
the case of the massive Gross--Neveu model with $\beta_1 = - \beta_2 > 0$.

It is well-known from the previous literature,
see, e.g., \cite{KivPel,PY,ZR}, that the line solitons
are unstable in the elliptic NLS equation with respect to the gauge rotation, in agreement
with the result of Lemma \ref{lemmaSoler}. Moreover, the instability band has a finite threshold
on the transverse wave number $p$, in agreement with our numerical results 
on Figures \ref{SolerFig} and \ref{GrossFig}.
Thus, we anticipate that our results are applicable to the more general family of the
massive Dirac equations (\ref{dirac3}) with $\beta_1 > 0$ and arbitrary $\beta_2$.

To summarize, we proved analytically for the massive Thirring and Gross--Neveu models that
the line solitons are unstable with respect to the transverse perturbations
of sufficiently long periods. We approximated eigenvalues of the transverse stability problem
numerically  and showed that the instability region extends to the transverse 
perturbations of any period for
the massive Thirring model but it has a finite threshold for the massive Gross--Neveu model.
Justified with the small-amplitude reduction to the hyperbolic or elliptic NLS equations,
we extended this conclusion to the more general massive Dirac equations which model
periodic stripe and hexagonal potentials in the two-dimensional Gross--Pitaevskii equation.

\vspace{0.5cm}

{\bf Acknowledgements:} The authors thank A. Comech and P. Kevrekidis for useful discussions
of the numerical part of this work. D.P. is supported by the NSERC grant. Y.S. is supported by
a graduate scholarship of the Department of Mathematics at McMaster University.

\end{document}